\documentclass[11pt]{amsart}

\usepackage[hidelinks]{hyperref}
\hypersetup{
  colorlinks=true,
  linkcolor=black,     % For TOC, cross-references, etc.
  citecolor=blue,      % For citations only
  urlcolor=black,      % For URLs
  filecolor=black      % For file links
}
\usepackage[foot]{amsaddr}
\usepackage[letterpaper,top=3.5cm,bottom=3.5cm,left=3.5cm,right=3.5cm,marginparwidth=1.75cm]{geometry}
\usepackage[utf8]{inputenc}            % allow utf-8 input
\usepackage[T1]{fontenc}               % use 8-bit T1 fonts

\usepackage{microtype}
\usepackage{graphicx}
\usepackage{subfigure}
\usepackage{booktabs} % for professional tables
\usepackage{enumitem}
\usepackage{color}
\usepackage{soul}
\usepackage{wrapfig}
\usepackage{overpic}
\usepackage{amsmath}
\usepackage{amssymb}
\usepackage{amsthm}
\usepackage{commands}
\usepackage[showonlyrefs]{mathtools}

\usepackage{siunitx} % For aligning numbers by decimal point

\usepackage[capitalize,noabbrev]{cleveref}

\newtheorem{theorem}{Theorem}[section]
\newtheorem{lemma}[theorem]{Lemma}
\newtheorem{assumption}[theorem]{Assumption}
\newtheorem{proposition}[theorem]{Proposition}
\newtheorem{definition}[theorem]{Definition}
\newtheorem{example}[theorem]{Example}

\theoremstyle{remark}
\newtheorem{remark}[theorem]{Remark}
\numberwithin{equation}{section}

\newcommand{\xqed}[1]{%
    \leavevmode\unskip\penalty9999 \hbox{}\nobreak\hfill
    \quad\hbox{\ensuremath{#1}}}
\newcommand{\Endofdef}{\xqed{\lozenge}}

\newcommand{\cN}{\mathcal{N}}

\newcommand{\bE}{\mathbb{E}}
\newcommand{\bP}{\mathbb{P}}
\newcommand{\bR}{\mathbb{R}}

\newcommand{\rd}{\mathrm{d}}

\newcommand{\sfN}{\mathsf{N}}

\title[Variational Optimality of Föllmer Processes in Generative Diffusions]{Variational Optimality of Föllmer Processes in Generative Diffusions}

\author{Yifan Chen\textsuperscript{1}}
\address{\textsuperscript{1}Department of Mathematics, University of California, Los Angeles, CA, USA}
\email{yifanchen@math.ucla.edu}
\author{Eric Vanden-Eijnden\textsuperscript{2,3}}
\address{\textsuperscript{2}Machine Learning Lab, Capital Fund Management, Paris, France}
\address{\textsuperscript{3}Courant Institute, New York University, NY, USA}
\email{eve2@nyu.edu}

\begin{document}
\setcounter{tocdepth}{1}
\begin{abstract}
We construct and analyze generative diffusions that transport a point mass to a prescribed target distribution over a finite time horizon using the stochastic interpolant framework. The drift is expressed as a conditional expectation that can be estimated from independent samples without simulating stochastic processes. We show that the diffusion coefficient can be tuned \emph{a~posteriori} without changing the time-marginal distributions. Among all such tunings, we prove that minimizing the impact of estimation error on the path-space Kullback--Leibler divergence selects, in closed form, a F\"ollmer process---a diffusion whose path measure minimizes relative entropy with respect to a reference process determined by the interpolation schedules alone. This yields a new variational characterization of F\"ollmer processes, complementing classical formulations via Schr\"odinger bridges and stochastic control, and provides a conditional-expectation representation of the F\"ollmer drift that enables simulation-free estimation from data. We further establish that, under this optimal diffusion coefficient, the path-space Kullback--Leibler divergence becomes independent of the interpolation schedule, rendering different schedules statistically equivalent in this variational sense. We provide numerical experiments to illustrate the impact of path-space variational optimality of F\"ollmer's processes in probabilistic forecasting and data assimilation applications.
\end{abstract}

\maketitle

\tableofcontents
\newpage
\section{Introduction}

The central problem in generative modeling is to produce samples from a probability distribution $\mu_\star$ given only a finite collection of observations. A prominent class of methods is based on continuous-time dynamics: ordinary or stochastic differential equations (ODEs or SDEs) whose time marginals interpolate between a tractable reference measure and the target. These include diffusion and score-based generative models \cite{sohl2015deep,ho2020denoising,song2020score}, flow matching and rectified flows \cite{lipman2022flow,liu2022flow}, and stochastic interpolant methods \cite{albergo2022building,albergo2023stochastic}. Despite their empirical success, many aspects of these constructions---in particular questions of optimality, regularity, and the effect of design choices on statistical performance---are not yet systematically understood. We address several such questions in the case where the reference measure is a Dirac mass, so that all variability in the generative process is created by the diffusion.

The Dirac-source case is not merely a special instance: it is classical in probability theory and natural in several applied settings. Diffusions that start from a fixed point and are constrained to reach a prescribed distribution at a finite terminal time include F\"ollmer processes \cite{follmer1986time} and Schr\"odinger bridges with degenerate initial data \cite{schrodinger1932theorie,leonard2014survey,chen2021stochastic}. They also arise naturally in probabilistic forecasting and data assimilation, where the point source represents the most recent observation of a dynamical system and the terminal law encodes the conditional distribution of future states. Such constructions have proved effective in applications ranging from turbulent fluid dynamics \cite{chen2024probabilistic} and weather prediction \cite{kossaifi2026demystifying} to cosmological simulation \cite{cuesta-lazaro2024joint}.

In the remainder of this introduction, we formulate the problem precisely, state our main results, discuss related work, and outline the organization of the paper.

\subsection{Setting and main results}
\label{sec:setup-main-results}

Let $\mu_\star$ be a probability distribution on $\bR^d$ with
\[
    \int_{\bR^d} |x|^2 \,\mu_\star(\rd x) < \infty .
\]
We seek to construct a diffusion process $(X_t)_{t\in[0,1]}$ satisfying
\begin{equation}
    \label{eq:intro:sde}
    \rd X_t = b_t(X_t)\,\rd t + g_t\,\rd W_t, \quad X_0 = 0, \quad X_1 \sim \mu_\star ,
\end{equation}
where $(W_t)$ is a standard Wiener process, $b_t$ is the drift, and $g_t$ the diffusion coefficient. In this point-source setting diffusion is essential: an ODE cannot transport a Dirac mass to a general distribution.

\subsubsection{Stochastic interpolants and a baseline diffusion}

We construct such diffusions through the stochastic interpolant framework \cite{albergo2023stochastic}, which directly specifies the marginal evolution and yields simulation-free regression objectives for estimation. Fix interpolation schedules $\beta,\sigma\in C^1([0,1])$ such that
\[
    \beta_0 = 0, \quad \beta_1 = 1, \quad \sigma_1 = 0, \quad
    \dot\beta_t > 0, \ \dot\sigma_t < 0 \ \text{for } t\in [0,1] .
\]
Given $x_\star \sim \mu_\star$, independent of a Wiener process $(W_t)$, we define the \emph{stochastic interpolant}
\begin{equation}
    \label{eq:intro:interpolant}
    I_t = \beta_t \, x_\star + \sigma_t \, W_t,\qquad t\in[0,1] ,
\end{equation}
so that $I_0 = 0$ and $I_1 = x_\star \sim \mu_\star$. For each $t\in(0,1)$, the law of $I_t$ is the convolution of $\mu_\star$ with a non-degenerate Gaussian, and therefore admits a smooth density $\rho_t$ with respect to Lebesgue measure.

The process $(I_t)$ is in general not Markovian, but one can construct a Markov diffusion with the same marginals at each time; this can be seen as a consequence of Gy\"ongy's mimicking theorem \cite{gyongy1986mimicking}. For the interpolant, this Markovian projection admits an explicit drift formula.

\begin{theorem}[Baseline generative diffusion; informal, see Theorem~\ref{thm:1:b} for the precise statement]
\label{thm:1:b_informal}
The SDE \eqref{eq:intro:sde} with $g_t = \sigma_t$ and drift
\begin{equation}
    \label{eq:intro:drift}
    b_t(x) = \bE[\dot\beta_t \, x_\star + \dot\sigma_t \sqrt{t}\, z \;|\; \beta_t \, x_\star + \sigma_t\sqrt{t}\, z = x] ,
\end{equation}
where $z \sim \sfN(0,\mathrm{I})$ is independent of $x_\star$, satisfies $\mathrm{Law}(X_t) = \mathrm{Law}(I_t)$ for all $t \in [0,1]$, and in particular $X_1 \sim \mu_\star$.
\end{theorem}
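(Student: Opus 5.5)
The plan is to derive a Fokker--Planck equation for the marginal densities $\rho_t$ of $I_t$ directly from the definition \eqref{eq:intro:interpolant}, identify its drift with \eqref{eq:intro:drift}, and then transfer the marginals to the Markov SDE \eqref{eq:intro:sde} through a uniqueness argument. Since only one-time marginals matter, I will replace $I_t$ by the equal-in-law process $J_t=\beta_t x_\star+\sigma_t\sqrt{t}\,z$. Here $z\sim\sfN(0,\mathrm{I})$ is independent of $x_\star$, and $(x_\star,W_t)\overset{d}{=}(x_\star,\sqrt t\,z)$ for each fixed $t$. The process $J_t$ is differentiable in $t$ for $t>0$, which avoids any stochastic calculus at this stage.

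\emph{Step 1 (weak marginal equation).} Fix a test function $\varphi\in C_c^\infty(\bR^d)$ and $t\in(0,1)$. By dominated convergence (using $\int|x|^2\mu_\star<\infty$),
\[
\frac{\rd}{\rd t}\bE\varphi(J_t)=\bE\Bigl[\nabla\varphi(J_t)\cdot\Bigl(\dot\beta_t x_\star+\dot\sigma_t\sqrt t\,z+\tfrac{\sigma_t}{2\sqrt t}\,z\Bigr)\Bigr].
\]
For the last term, Gaussian integration by parts in $z$ (conditionally on $x_\star$) gives $\bE[\nabla\varphi(J_t)\cdot z]=\sigma_t\sqrt t\,\bE[\Delta\varphi(J_t)]$. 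That term therefore equals $\tfrac12\sigma_t^2\,\bE\Delta\varphi(J_t)$. In the first two terms I condition on $J_t$, which produces exactly $\bE[b_t(J_t)\cdot\nabla\varphi(J_t)]$ with $b_t$ as in \eqref{eq:intro:drift}. Hence $\rho_t$ solves
\[
\partial_t\rho_t+\nabla\cdot(b_t\rho_t)=\tfrac12\sigma_t^2\Delta\rho_t
\]
weakly on $(0,1)$, and $\rho_t\to\delta_0$ as $t\to0$. This is precisely the Kolmogorov forward equation of \eqref{eq:intro:sde} with $g_t=\sigma_t$.

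As a cross-check, the same drift comes out of Gy\"ongy's theorem applied to the It\^o process $\rd I_t=(\dot\beta_t x_\star+\dot\sigma_t W_t)\,\rd t+\sigma_t\,\rd W_t$. Its mimicking drift is $\bE[\dot\beta_t x_\star+\dot\sigma_t W_t\mid I_t=x]$, which equals $b_t(x)$ by the equality in law of $(x_\star,W_t)$ and $(x_\star,\sqrt t\,z)$. Its diffusion matrix is deterministic and equal to $\sigma_t^2\mathrm{I}$.

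\emph{Step 2 (well-posedness and identification).} For $t\in(0,1)$, $\rho_t$ is a Gaussian convolution of $\mu_\star$, so $b_t$ is smooth and locally Lipschitz. Writing $b_t(x)=\frac{\dot\sigma_t}{\sigma_t}x+\bigl(\dot\beta_t-\frac{\dot\sigma_t\beta_t}{\sigma_t}\bigr)\bE[x_\star\mid J_t=x]$ shows that $b_t$ is affine in $x$ plus a conditional mean of $x_\star$. This term is controlled in $L^2(\rho_t)$ by the second moment of $\mu_\star$. I will use this decomposition to obtain a unique strong solution on $[\epsilon,1-\epsilon]$ together with uniform second-moment bounds. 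Uniqueness for the Fokker--Planck equation within the class of measure-valued solutions with $\int_0^1\!\int|b_t|^2\rho_t\,\rd x\,\rd t<\infty$ (via a superposition principle / Figalli-type argument) then forces $\mathrm{Law}(X_t)=\rho_t$.

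\emph{Main obstacle.} The delicate part is the endpoints. Near $t=0$ the drift can be singular: $\sigma_t^2 t\to0$, so $b_t$ behaves like $x/(2t)$ plus bounded terms. Near $t=1$, $\sigma_t\to0$ and the noise degenerates. I would first start the SDE at time $\epsilon$ from $\rho_\epsilon$, prove marginal equality on $[\epsilon,1)$, and let $\epsilon\to0$ using $\rho_\epsilon\to\delta_0$ and the moment bounds to get a limiting process with $X_0=0$. At $t=1$ I would argue by continuity: the finiteness of $\bE\int_0^1|b_t(X_t)|\,\rd t$ (from the $L^2$ bound, given the integrable singularities) gives $X_t\to X_1$ a.s. Meanwhile $J_t\to x_\star$ in $L^2$, so $X_1\sim\mu_\star$.
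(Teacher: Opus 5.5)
\textbf{Step 1.} Your Step~1 is correct, and it reaches the paper's weak Fokker--Planck equation by a different route. The paper applies It\^o's formula to $\phi(I_t)$, so $\tfrac12\sigma_t^2\Delta\phi$ appears as the It\^o correction, and only afterwards swaps $(x_\star,W_r)$ for $(x_\star,\sqrt r\,z)$. You instead differentiate the time-$C^1$ surrogate $J_t$ and recover the Laplacian by Gaussian integration by parts on the extra velocity $\tfrac{\sigma_t}{2\sqrt t}z$. This avoids stochastic calculus. It also shows that $\mathbb{E}[\dot J_t\mid J_t=x]=b_t(x)-\tfrac12\sigma_t^2\nabla\log\rho_t(x)$: what Stein's identity turns into diffusion is exactly the score part of the probability-flow velocity.

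\textbf{Step 2: the singularity at $t=0$ is misdiagnosed.} Near $t=0$ the drift $b_t$ does \emph{not} behave like $x/(2t)$. The term $\tfrac{\sigma_t}{2\sqrt t}\,\mathbb{E}[z\mid J_t=x]\approx x/(2t)$ is precisely the one your Step~1 moved into the Laplacian. It is the singularity of the probability-flow ODE, consistent with the fact that no ODE can transport a Dirac mass, and it is not part of the SDE drift. For a Dirac target $\delta_a$ one gets $b_t(x)=\dot\beta_t a+\tfrac{\dot\sigma_t}{\sigma_t}(x-\beta_t a)$, and for Gaussian targets $b_t$ also has a finite limit at $t=0$ (Appendix~\ref{sec:singular behaviors of baseline SDEs at initial time}).

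\textbf{What the precise theorem actually needs.} Under Assumption~\ref{assum:bounded-support-smoothed-assumption}, the hypothesis of the precise Theorem~\ref{thm:1:b}, Appendix~\ref{appendix-Lipschitz regularity of the drift} shows that $b_t$ is globally Lipschitz in $x$ uniformly over $t\in(0,1]$. The decomposition used there also gives linear growth. The SDE therefore has a unique strong solution on all of $[0,1]$, including where the noise degenerates at $t=1$. Uniqueness for the Fokker--Planck equation with this drift then identifies its marginals with $\rho_t$. This is the implicit identification step in the paper's proof, and it needs no $\epsilon$-regularization or endpoint analysis. Your proposal never uses this regularity.

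\textbf{The gap in the general setting.} If you instead want to assume only $\int|x|^2\,\mu_\star(\rd x)<\infty$, there is a genuine gap at $t=0$. There $b_t$ can blow up outward rather than as a restoring force; see the exponential example in the same appendix.
\begin{itemize}
\item Your $\epsilon\to0$ limit yields only \emph{existence} of some solution started at $0$ with marginals $\rho_t$. The superposition principle gives the same existence directly on $[0,1]$, since $\int_0^1\mathbb{E}|b_t(J_t)|\,\rd t<\infty$.
\item Superposition does not give uniqueness for the Fokker--Planck equation. That requires uniqueness in law for the SDE started from $0$, which you never establish across $t=0$. So you cannot conclude that \emph{the} solution of the SDE has the interpolant's marginals.
\item An $L^2(\rho_t)$ bound on $b_t$ controls the drift only along the target marginals. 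It cannot by itself give strong well-posedness or a priori moment bounds for an arbitrary solution on $[\epsilon,1-\epsilon]$; you need pointwise growth and Lipschitz control, as in Appendix~\ref{appendix-Lipschitz regularity of the drift}.
\end{itemize}
Your continuity argument at $t=1$ is fine once marginal equality on $[0,1)$ is known.
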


The representation \eqref{eq:intro:drift} expresses $b_t$ as a conditional expectation with respect to the joint law of $(x_\star,z)$. As a result, $b_t$ can be characterized as the minimizer of a square loss and estimated by regression from i.i.d.\ samples of $x_\star$ and $z$, without simulating any SDE. The same representation also relates $b_t$ to the \textit{score} $\nabla\log\rho_t$ in closed form (see Theorem~\ref{thm:tunning_g}), which sets up the variational analysis of the next subsection.

\subsubsection{Tuning the diffusion coefficient and F\"ollmer processes}

A key structural feature is that the choice $g_t = \sigma_t$ is not intrinsic. Given a continuous function $g_t>0$, one may change the diffusion coefficient and adjust the drift without altering the marginal laws.

To formulate this, let $\rho_t$ denote the common time-$t$ density of $I_t$ and $X_t$. For any such $g$, set
\begin{equation}
    \label{eq:intro:tuned-drift}
    b_t^g(x) = b_t(x) + \tfrac{1}{2}(g_t^2 - \sigma_t^2)\,\nabla\log\rho_t(x) .
\end{equation}
At the level of the Fokker--Planck equation, the identity $\nabla\cdot(\rho\,\nabla\log\rho) = \Delta\rho$ implies that the diffusion with drift $b_t^g$ and diffusion coefficient $g_t$ has the same time marginals as the baseline diffusion. We make this precise in Theorem~\ref{thm:tunning_g}, including assumptions on $g$ to ensure non-singular drifts.

Thus the interpolant construction produces a whole family of diffusions of the form \eqref{eq:intro:sde}, parametrized by the scalar function $g$. This raises a natural variational question: can one select $g$ in a principled way? Our criterion is based on the effect of estimation error. Let $\hat b_t$ be an estimator of $b_t$ obtained from data; the tuned drift $\hat{b}_t^g$ is then derived from $\hat{b}_t$ via \eqref{eq:intro:tuned-drift}, so the estimator transfers across the family at no additional training cost (see Theorem \ref{thm:tunning_g}). Writing $\bP_{X^g}$ and $\bP_{\hat X^g}$ for the path measures of the exact and estimated diffusions at diffusion coefficient $g_t$, we choose the criterion to be the path-space Kullback--Leibler (KL) divergence
\begin{equation}
    \label{eq:intro:kl}
    \mathrm{KL}\!\left(\bP_{X^g} \,\big\|\, \bP_{\hat X^g}\right),
\end{equation}
and ask for which $g$ this quantity is minimized.

\begin{theorem}[KL-optimal diffusion coefficient; informal, see Theorem~\ref{prop-min-KL} for the precise statement]
\label{prop-min-KL_informal}
The function $g$ that minimizes the path-space KL divergence \eqref{eq:intro:kl} is
\begin{equation}
    \label{eq:intro:gF}
    g_t^{\rm F} = \left|2t\sigma_t^2\,\frac{\rd}{\rd t}\log\frac{\beta_t}{\sqrt{t}\,\sigma_t}\right|^{1/2} .
\end{equation}
\end{theorem}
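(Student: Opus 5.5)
The plan is to write the path-space KL divergence with Girsanov's theorem and minimize the resulting integrand pointwise in $t$ over the single scalar $g_t^2$. Both the exact and the estimated diffusion at coefficient $g$ start from $X_0=0$ and share the diffusion coefficient $g_t$. Girsanov's theorem therefore gives
\begin{equation}
  \mathrm{KL}\!\left(\bP_{X^g}\,\big\|\,\bP_{\hat X^g}\right)=\frac12\int_0^1 \frac{1}{g_t^2}\,\bE\big|b^g_t(X^g_t)-\hat b^g_t(X^g_t)\big|^2\,\rd t .
\end{equation}
By Theorem~\ref{thm:tunning_g}, $X^g_t\sim\rho_t$ for every admissible $g$. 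Hence the expectation is taken against a measure that does not depend on $g$, and the whole $g$-dependence comes from the prefactor and from how the drift error transforms under the tuning \eqref{eq:intro:tuned-drift}.

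First I will express the score through the drift. From \eqref{eq:intro:drift}, write $x=\beta_t\bE[x_\star|x]+\sigma_t\sqrt t\,\bE[z|x]$. Gaussian integration by parts gives $\nabla\log\rho_t(x)=-\bE[z|x]/(\sigma_t\sqrt t)$. Eliminating $\bE[x_\star|x]$ then yields
\begin{equation}
  b_t(x)=\frac{\dot\beta_t}{\beta_t}\,x+\lambda_t\nabla\log\rho_t(x),\qquad \lambda_t=t\sigma_t^2\Big(\frac{\dot\beta_t}{\beta_t}-\frac{\dot\sigma_t}{\sigma_t}\Big)>0 .
\end{equation}
This is the closed-form relation referred to in Theorem~\ref{thm:tunning_g}. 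The estimated score is accordingly $\hat s_t=(\hat b_t-\tfrac{\dot\beta_t}{\beta_t}x)/\lambda_t$, and the tuned estimator is $\hat b^g_t=\hat b_t+\tfrac12(g_t^2-\sigma_t^2)\hat s_t$. Subtracting from the exact tuned drift gives
\begin{equation}
  b^g_t-\hat b^g_t=\Big(1+\frac{g_t^2-\sigma_t^2}{2\lambda_t}\Big)(b_t-\hat b_t).
\end{equation}
Substituting, the KL divergence equals
\begin{equation}
  \frac12\int_0^1 \frac{(c_t+g_t^2/2)^2}{\lambda_t^2\,g_t^2}\,\varepsilon_t\,\rd t,\qquad c_t=\lambda_t-\tfrac12\sigma_t^2,
\end{equation}
where $\varepsilon_t=\bE_{\rho_t}|b_t-\hat b_t|^2$ is independent of $g$.

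Since $\varepsilon_t\ge0$ does not depend on $g$, I minimize $y\mapsto (c+y/2)^2/y$ over $y=g_t^2>0$ for each fixed $t$. If $c>0$ the minimizer is $y=2c$, by AM--GM or calculus. If $c<0$ the choice $y=-2c$ makes the integrand vanish. In both cases the minimizer is $y=2|c_t|=|2\lambda_t-\sigma_t^2|$. Using $\tfrac{\rd}{\rd t}\log\sqrt t=\tfrac1{2t}$, this equals
\begin{equation}
  |2\lambda_t-\sigma_t^2|=\Big|2t\sigma_t^2\,\frac{\rd}{\rd t}\log\frac{\beta_t}{\sqrt t\,\sigma_t}\Big| ,
\end{equation}
which is \eqref{eq:intro:gF}. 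The minimizer is unique wherever $\varepsilon_t>0$, and choosing it at every $t$ minimizes the integral for every estimator $\hat b$.

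The main obstacle is the rigorous justification rather than the algebra. One must check that Girsanov applies, via Novikov or a localization argument with lower semicontinuity of KL, given that $\nabla\log\rho_t$ and $b_t$ may be singular near $t=0$ (point source) and $t=1$ ($\sigma_1=0$). One must also check that $g^{\rm F}$ lies in the admissible class of Theorem~\ref{thm:tunning_g}. The case $c_t=0$ deserves care, since then $g^{\rm F}_t=0$ and positivity fails. I would handle that by working with a relaxed admissible class, or by noting that the infimum is approached by $g$ close to $g^{\rm F}$. For the endpoint behavior I would use the explicit asymptotics $\sigma_t\to0$ and $\beta_t\sim\dot\beta_0 t$ to verify that the integrand with $g=g^{\rm F}$ remains integrable whenever $\int_0^1\varepsilon_t\lambda_t^{-1}\,\rd t<\infty$.
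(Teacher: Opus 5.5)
Your proposal is correct and follows the paper's proof essentially step for step. The paper also applies Girsanov (on $[0,1-\delta]$, recovering the full interval by monotone convergence) to get $\frac12\int g_t^{-2}\bigl|1+\tfrac12\beta_tA_t(g_t^2-\sigma_t^2)\bigr|^2L_t\,\rd t$, which is your expression since $\lambda_t=(\beta_tA_t)^{-1}$ and $\gamma_t=c_t/\lambda_t$. It then minimizes pointwise over $g_t^2$ with the same two sign cases, giving $g_t^2=|2\lambda_t-\sigma_t^2|$; your remarks on uniqueness only where $\varepsilon_t>0$ and on points where $c_t=0$ are small refinements of that argument, not a different route.
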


The structure behind this formula is clarified by the next result, which shows that the diffusion with optimally tuned coefficient $g^{\rm F}$ is a F\"ollmer process in the sense of relative entropy minimization with respect to a linear reference diffusion.

\begin{theorem}[Variational characterization of F\"ollmer processes; informal, see Theorem~\ref{th:foll:app} for the precise statement]
\label{th:foll:app_informal}
Assume that $\beta_t/(\sqrt{t}\,\sigma_t)$ is non-decreasing on $[0,1]$. Let $X^{g^{\rm F}}$ be the diffusion obtained by inserting $g_t=g_t^{\rm F}$ into \eqref{eq:intro:tuned-drift}. Then $X^{g^{\rm F}}$ is the F\"ollmer process associated with the reference process
\begin{equation}
    \label{eq:intro:ref}
    \rd Y_t = a_t\, Y_t\,\rd t + g_t^{\rm F}\,\rd W_t, \quad Y_0 = 0, \quad a_t = \frac{\rd}{\rd t}\log\frac{\beta_t^2 + t\sigma_t^2}{\beta_t} .
\end{equation}
That is, among all diffusions $\check X$ with $\check X_0 = 0$ and $\check X_1 \sim \mu_\star$, the process $X^{g^{\rm F}}$ uniquely minimizes $\mathrm{KL}(\bP_{\check X} \| \bP_Y)$.
\end{theorem}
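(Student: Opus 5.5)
The approach is the classical characterization of Föllmer processes through the disintegration of relative entropy. For any $\check X$ with $\check X_0=0$ and $\check X_1\sim\mu_\star$, the chain rule for KL gives
\[
\mathrm{KL}(\bP_{\check X}\|\bP_Y)=\mathrm{KL}(\mu_\star\|\mathrm{Law}(Y_1))+\int \mathrm{KL}\bigl(\bP_{\check X}(\cdot\,|\,\check X_1=x)\,\big\|\,\bP_Y(\cdot\,|\,Y_1=x)\bigr)\,\mu_\star(\rd x).
\]
The first term is fixed, so the minimizer is unique and equals the mixture $\bP^\star=\int \bP_Y(\cdot\,|\,Y_1=x)\,\mu_\star(\rd x)$ of bridges of $Y$. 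It therefore suffices to show that $\bP_{X^{g^{\rm F}}}=\bP^\star$. Since both are path measures of diffusions starting at $0$ with the same diffusion coefficient $g^{\rm F}$, it is enough to show that they have the same drift.

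First, make $Y$ explicit. The equation for $Y$ is linear and starts at $0$, so $Y_t$ is a centered Gaussian process with $Y_t=\Phi_t\int_0^t\Phi_s^{-1}g^{\rm F}_s\,\rd W_s$, where $\Phi_t=\exp(\int_0^t a_s\,\rd s)\propto(\beta_t^2+t\sigma_t^2)/\beta_t$. The choice of $a_t$ and $g^{\rm F}$ should produce the covariance
\[
\mathrm{Var}(Y_t)=\kappa\,\beta_t\,(\beta_t^2+t\sigma_t^2)\,\mathrm{I}
\]
for a constant $\kappa$. To check this, differentiate $\log\bigl(\mathrm{Var}(Y_t)/\Phi_t^2\bigr)$ and use
\[
(g_t^{\rm F})^2=2t\sigma_t^2\,\tfrac{\rd}{\rd t}\log\tfrac{\beta_t}{\sqrt t\,\sigma_t}\ge0,
\]
which holds under the monotonicity assumption and is exactly what makes $g^{\rm F}$ the square root of a nonnegative quantity. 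This is the main computational step. I would guess the covariance in the ratio form $\mathrm{Var}(Y_t)/\Phi_t^2=c\,\beta_t^3/(\beta_t^2+t\sigma_t^2)$ and differentiate. The Gaussian bridge $Y\,|\,Y_1=x$ is then a Gaussian diffusion with drift
\[
a_t y+(g^{\rm F}_t)^2\nabla_y\log p_{1|t}(x\,|\,y),
\]
by Doob's $h$-transform. Consequently $\bP^\star$ is the law of the diffusion with drift $a_t y+(g_t^{\rm F})^2\nabla\log h_t(y)$, where $h_t(y)=\int p_{1|t}(x|y)\,\frac{\mu_\star}{p_1}(\rd x)$ is the Föllmer drift.

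Next, compute the drift of $X^{g^{\rm F}}$. By Theorem~\ref{thm:tunning_g}, $b_t$ and $\nabla\log\rho_t$ are both affine in $x$ and in $\bE[x_\star\,|\,I_t=x]$, because $I_t$ is Gaussian given $x_\star$ with variance $t\sigma_t^2$. Explicitly,
\[
\nabla\log\rho_t(x)=\frac{\beta_t\,\bE[x_\star|I_t=x]-x}{t\sigma_t^2},
\]
and $b_t$ is a combination of $x$ and this conditional mean with coefficients in $\beta,\dot\beta,\sigma,\dot\sigma$. The drift $b^{g^{\rm F}}_t$ from \eqref{eq:intro:tuned-drift} is therefore also of the form $A_t x+B_t\,\bE[x_\star|I_t=x]$. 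On the Föllmer side, write $\nabla\log h_t(y)$ using Bayes' formula. The marginal of $\bP^\star$ at time $t$ is $\int p_{t|1}(\cdot\,|\,x)\,\mu_\star(\rd x)$. Since $Y_t\,|\,Y_1=x$ is Gaussian with mean $\frac{\mathrm{Cov}(Y_t,Y_1)}{\mathrm{Var}(Y_1)}x$ and a deterministic variance, this bridge should coincide in law with $\beta_t x+\sqrt t\sigma_t z$. This is the key identification: the bridge mean $\Phi_t\mathrm{Var}(Y_t)/(\Phi_1\mathrm{Var}(Y_t)\cdot\ldots)$ must reduce to $\beta_t x$ and the bridge variance to $t\sigma_t^2$, which the covariance formula above should deliver. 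Given this, $\bP^\star$ has marginals $\rho_t$. Moreover, $\nabla\log h_t(y)$ is affine in $y$ and in $\bE[x_\star|I_t=y]$, because $\log p_{1|t}(x|y)$ is quadratic and the posterior of $Y_1$ given $Y_t=y$ under $\bP^\star$ equals the posterior of $x_\star$ given $I_t=y$.

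Finally, match the coefficients of $x$ and of $\bE[x_\star|I_t=x]$ in the two drifts. This is a finite algebraic check against the formulas for $a_t$ and $g^{\rm F}_t$. Alternatively, it suffices to match only the coefficient $B_t$: both processes share the marginals $\rho_t$ and the diffusion coefficient, so their drifts differ by a divergence-free field with respect to $\rho_t$, and an affine-in-$(x,\bE[x_\star|\cdot])$ field of that kind must vanish. Uniqueness then follows from the strict convexity of KL in the disintegration. Technical points, namely integrability of the drifts near $t=0$ and $t=1$ and the finiteness of $\mathrm{KL}(\mu_\star\|\mathrm{Law}(Y_1))$, would be handled using the second-moment assumption and the assumptions on $g$ from Theorem~\ref{thm:tunning_g}. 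I expect the main obstacle to be the covariance and bridge identification: verifying that the specific pair $(a_t,g^{\rm F}_t)$ makes the $Y$-bridge pinned at $x$ equal in law, marginally in $t$, to $\beta_t x+\sqrt t\,\sigma_t z$.
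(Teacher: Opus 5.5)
Your route is correct but differs from the paper's. The paper works in reversed time. It writes the drift of $X^g$ in score form and time-reverses it. At $g=g^{\rm F}$, with $1-\tfrac12\beta_tA_t\sigma_t^2\ge 0$ (this is where monotonicity enters), the score term in the reversed drift cancels. What remains is the linear SDE $\rd X^{\rm R}_t=-\frac{\dot\beta_{1-t}}{\beta_{1-t}}X^{\rm R}_t\,\rd t+g^{\rm F}_{1-t}\,\rd W_t$ started from $\mu_\star$. Replacing the initial law by $\sfN(0,\mathrm{I})$ and reversing back produces $Y$. So the paper \emph{derives} $a_t$ and the shared-bridge property, and it gets optimality by disintegrating at the reversed initial point.

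You instead disintegrate forward at $t=1$, compute the Gaussian bridges of the given $Y$, and identify $X^{g^{\rm F}}$ with the Doob $h$-transform by matching drifts. This avoids diffusion time reversal and makes the bridge law $Y_t\mid Y_1=x\sim\sfN(\beta_t x,t\sigma_t^2\mathrm{I})$ explicit. The price is that $a_t$ must be supplied in advance and verified.

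The drift matching is shorter than you anticipate. Since $h_t=\rho_t/\rho^Y_t$ and $\nabla\log\rho^Y_t(y)=-y/(\beta_t^2+t\sigma_t^2)$, the F\"ollmer drift reduces to $\frac{\dot\beta_t}{\beta_t}y+(g^{\rm F}_t)^2\nabla\log\rho_t(y)$. This agrees with $b^{g^{\rm F}}_t$ through the identity $\beta_t^{-1}A_t^{-1}+\tfrac12\bigl((g^{\rm F}_t)^2-\sigma_t^2\bigr)=(g^{\rm F}_t)^2$. That identity holds exactly when the absolute value in $g^{\rm F}$ is inactive, which is the same cancellation the paper exploits in reversed time.

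Your guessed covariance is wrong, and with it the bridge identification would fail. Set $\psi_t=(\beta_t^2+t\sigma_t^2)/\beta_t$. Then $\Phi_t=\psi_t/\psi_0$ with $\psi_0=\sigma_0^2/\dot\beta_0$. Under monotonicity, $(g^{\rm F}_t)^2=-\beta_t^2\frac{\rd}{\rd t}\bigl(t\sigma_t^2/\beta_t^2\bigr)$, so $\psi_t^{-2}(g^{\rm F}_t)^2=\frac{\rd}{\rd t}\frac{\beta_t^2}{\beta_t^2+t\sigma_t^2}$. Since $t\sigma_t^2/\beta_t^2\to\infty$ as $t\to 0$, integrating gives
\[
\mathrm{Var}(Y_t)=(\beta_t^2+t\sigma_t^2)\,\mathrm{I},\qquad \frac{\mathrm{Var}(Y_t)}{\Phi_t^2}=\psi_0^2\,\frac{\beta_t^2}{\beta_t^2+t\sigma_t^2}\,\mathrm{I}.
\]
So there is no free constant, and the exponent is $2$, not $3$. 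Consequently $\mathrm{Cov}(Y_t,Y_1)=(\Phi_1/\Phi_t)\mathrm{Var}(Y_t)=\beta_t\mathrm{I}$ and $\mathrm{Var}(Y_1)=\mathrm{I}$. This gives bridge mean $\beta_t x$ and bridge variance $t\sigma_t^2$, as you need. Your guess $\kappa\beta_t(\beta_t^2+t\sigma_t^2)$ would instead give bridge mean $\beta_t^2 x$. The differentiation check you planned would catch this, so it is a computational slip rather than a structural gap.
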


The reference process $Y$ depends only on the schedules $(\beta_t,\sigma_t)$, not on the target $\mu_\star$, which enters solely through the terminal constraint. The proof proceeds by time-reversing the tuned diffusion and showing that, at the optimal $g^{\rm F}$, the reversed drift no longer involves the score $\nabla\log\rho_t$; changing the initial condition in this reversed SDE then produces a reference diffusion whose bridges are shared with $X^{g^{\rm F}}$, and the variational optimality follows from the usual disintegration of relative entropy. The construction can equivalently be read as a variational optimality of time reversal in this setting.

This is a new variational characterization of F\"ollmer processes. In the classical formulation \cite{follmer1986time,lehec2013representation} the reference process is fixed \textit{a~priori}---typically Brownian motion---and the F\"ollmer process is the entropy minimizer subject to endpoint constraints; here, the reference process itself emerges from an optimization within a parametric family of generative diffusions sharing the same marginal laws. The two parametrizations are equivalent in expressivity: one may also start from a F\"ollmer process with prescribed reference dynamics and recover the stochastic interpolant via the score. As a byproduct, we further show (Proposition~\ref{prop-Follmer-SDE}) that the F\"ollmer drift admits a conditional-expectation representation analogous to the one for the baseline drift in \eqref{eq:intro:drift}, so it too can be estimated from data by simulation-free square-loss regression.

\subsubsection{Schedule invariance}

With $g_t = g_t^{\rm F}$ fixed by Theorem~\ref{prop-min-KL_informal}, the remaining design freedom is the choice of schedules $(\beta_t,\sigma_t)$. A natural question is whether some schedules are intrinsically better than others, in terms of the statistical error induced by estimating the drift from finitely many samples. Our final main result shows that, at the level of path-space KL divergence, the answer is no.

\begin{theorem}[Schedule invariance; informal, see Theorem~\ref{thm:schedule-invariance} for the precise statement]
\label{thm:schedule-invariance_informal}
After KL-optimal tuning of the diffusion coefficient, the minimized path-space KL divergence takes the form
\[
    \mathrm{KL}^\star = 2\int_0^\infty r\,\bE\!\left[\bigl|\nabla\log q_r(x_\star + rz) - \hat s_r(x_\star + rz)\bigr|^2\right]\rd r ,
\]
where $q_r$ is the density of $\mu_\star * \cN(0, r^2\mathrm{I})$ and $\hat s_r$ is the estimated score at noise level $r$ derived from $\hat{b}_t$. In particular, assuming the same accuracy of the estimated score $\hat s_r$, $\mathrm{KL}^\star$ is independent of the schedules $(\beta_t,\sigma_t)$.
\end{theorem}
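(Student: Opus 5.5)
The plan is to compute the path-space KL by Girsanov, express the drift error through the score error, insert $g=g^{\rm F}$, and finally change variables from $t$ to a noise level $r$ in which the schedules disappear.

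\textbf{Step 1 (Girsanov).} Both diffusions start at $0$ and share the coefficient $g_t$. Girsanov's theorem therefore gives
\[
\mathrm{KL}(\bP_{X^g}\|\bP_{\hat X^g}) = \tfrac12\int_0^1 g_t^{-2}\,\bE\bigl[|b^g_t(X_t)-\hat b^g_t(X_t)|^2\bigr]\rd t ,
\]
with $X_t\sim\rho_t=\mathrm{Law}(I_t)$.

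\textbf{Step 2 (drift error in terms of score error).} By the closed-form relation between $b_t$ and $\nabla\log\rho_t$ in Theorem~\ref{thm:tunning_g}, the drift is affine in the score,
\[
b_t(x)=\alpha_t x+\gamma_t\nabla\log\rho_t(x),
\]
and the estimator is parametrized the same way. Writing $\hat s$ for the induced score estimate, \eqref{eq:intro:tuned-drift} gives
\[
b^g_t-\hat b^g_t = \bigl(\gamma_t+\tfrac12(g_t^2-\sigma_t^2)\bigr)\bigl(\nabla\log\rho_t-\hat s_t\bigr).
\]
To find $\gamma_t$, use Tweedie's formula: with $I_t=\beta_t x_\star+\sigma_t\sqrt t\,z$,
\[
\bE[z\mid I_t=x]=-\sigma_t\sqrt t\,\nabla\log\rho_t(x), \qquad x_\star=(x-\sigma_t\sqrt t\, z)/\beta_t .
\]
Substituting into \eqref{eq:intro:drift} gives $\alpha_t=\dot\beta_t/\beta_t$ and
\[
\gamma_t = t\sigma_t^2\,\frac{\rd}{\rd t}\log\frac{\beta_t}{\sqrt t\,\sigma_t}.
\]
Hence the tuned coefficient is $c_t(g)=\gamma_t+\tfrac12 g_t^2-\tfrac12\sigma_t^2$. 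The integrand of Step 1 is then $\tfrac12\,(c_t(g)^2/g_t^2)\,e_t$, where $e_t$ is the score error at time $t$.

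\textbf{Step 3 (pointwise minimization in $g$).} Minimize $\bigl(\gamma_t-\tfrac12\sigma_t^2+\tfrac12 g^2\bigr)^2/g^2$ pointwise over $g>0$. Write $u=g^2$ and $k_t=\gamma_t-\tfrac12\sigma_t^2$. The function $(k+u/2)^2/u$ is minimized at $u=2|k|$, with minimum value $2|k|$ when $k>0$, and value $0$ when $k\le 0$.

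This is the main obstacle. The formula \eqref{eq:intro:gF} reads $g^2=2\gamma_t$, not $2(\gamma_t-\sigma_t^2/2)$. So I expect the $\sigma_t^2$ term to be cancelled by a $-\tfrac12\sigma_t^2\nabla\log\rho$ piece already contained in $b_t$; equivalently, $\gamma_t$ above should be redefined as the full score coefficient of $b^g_t-\tfrac12 g_t^2\nabla\log\rho_t$. I will redo the Tweedie computation carefully to confirm this. With the corrected coefficient $\kappa_t=\gamma_t$, the minimizer is $g^2=2\kappa_t$. The monotonicity hypothesis of Theorem~\ref{th:foll:app_informal} gives $\kappa_t\ge0$, so the absolute value in \eqref{eq:intro:gF} is inactive and the minimal integrand is $2\kappa_t\, e_t$.

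\textbf{Step 4 (change of variables).} Rescale,
\[
I_t/\beta_t = x_\star + r_t z, \qquad r_t=\sqrt t\,\sigma_t/\beta_t .
\]
Then $\rho_t(x)=\beta_t^{-d}q_{r_t}(x/\beta_t)$, so $\nabla\log\rho_t(x)=\beta_t^{-1}\nabla\log q_{r_t}(x/\beta_t)$. Define $\hat s_{r_t}$ by the same relation. The error becomes
\[
e_t=\beta_t^{-2}\,\bE\bigl|\nabla\log q_{r}(x_\star+rz)-\hat s_r(x_\star+rz)\bigr|^2 \quad\text{at } r=r_t .
\]
Next, $\kappa_t=t\sigma_t^2\,\frac{\rd}{\rd t}\log(1/r_t)=-t\sigma_t^2\,\dot r_t/r_t$, so
\[
2\kappa_t\beta_t^{-2}\,\rd t = -2\,\frac{r_t^2}{r_t}\,\dot r_t\,\rd t = -2 r_t\,\rd r_t .
\]
As $t$ goes from $0$ to $1$, $r_t$ decreases monotonically from $\infty$ (since $\beta_0=0$) to $0$ (since $\sigma_1=0$). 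The integral therefore becomes $2\int_0^\infty r\,\bE[\cdots]\,\rd r$, which is schedule-free.

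It remains to check the overall factor: the $\tfrac12$ from Girsanov against the factors of $2$ in the minimum. If the constants come out inconsistent with the stated $2\int_0^\infty$, that discrepancy is one more sign that the $\sigma_t^2$ bookkeeping in Step 3 must be fixed first.
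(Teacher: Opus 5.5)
Your route is the paper's: Girsanov in the form \eqref{eq:kl:a2}, the affine score representation of $b_t$ and $\hat b_t$, pointwise minimization in $g_t$, and the substitution $r_t=\sqrt t\,\sigma_t/\beta_t$. Your appeal to monotonicity ($r_t$ non-increasing) is actually needed: without it $t\mapsto r_t$ is not a bijection onto $(0,\infty)$ and $\hat s_r$ is not well defined. The paper keeps a $\max\{0,\cdot\}$ and changes variables without comment. However, the ``main obstacle'' in Step~3 comes from an error in your Tweedie computation in Step~2. Carrying it out gives
\[
b_t(x)=\frac{\dot\beta_t}{\beta_t}\,x+t\sigma_t^2\Bigl(\frac{\dot\beta_t}{\beta_t}-\frac{\dot\sigma_t}{\sigma_t}\Bigr)\nabla\log\rho_t(x),
\]
so $\gamma_t=t\sigma_t^2\frac{\rd}{\rd t}\log(\beta_t/\sigma_t)$, with no $\sqrt t$. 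Since $t\sigma_t^2\cdot\frac{1}{2t}=\frac12\sigma_t^2$, your $k_t=\gamma_t-\frac12\sigma_t^2$ equals $t\sigma_t^2\frac{\rd}{\rd t}\log\frac{\beta_t}{\sqrt t\,\sigma_t}$, and the minimizer $g_t^2=2|k_t|$ is exactly \eqref{eq:intro:gF}. Nothing needs to be cancelled. The expression you wrote as $\gamma_t$ is really $k_t$, which is why your guessed $\kappa_t$ comes out right. Note also that at $g=g^{\rm F}$ the score coefficient $k_t+\frac12 g_t^2$ equals $g_t^2$, so $\mathrm{KL}^\star=\frac12\int_0^1 (g^{\rm F}_t)^2\,e_t\,\rd t$.

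The real issue is the constant, and it is unrelated to the $\sigma_t^2$ bookkeeping. In Step~3 you dropped the Girsanov factor $\frac12$: the minimal integrand is $\frac12\cdot 2\kappa_t e_t=\kappa_t e_t$. With $e_t=\beta_t^{-2}\bE[\cdots]$ and $\kappa_t\beta_t^{-2}=-r_t\dot r_t$, Step~4 yields
\[
\mathrm{KL}^\star=\int_0^\infty r\,\bE\bigl[|\nabla\log q_r(x_\star+rz)-\hat s_r(x_\star+rz)|^2\bigr]\,\rd r,
\]
which is half the stated value. You can check this directly for $\beta_t=t$, $\sigma_t=1-t$. There $b^{g^{\rm F}}_t=(1+t)b_t-x$, $(g^{\rm F}_t)^2=1-t^2$, and $b_t-\hat b_t=\frac{1-t}{t}(\nabla\log q_{r_t}-\hat s_{r_t})(x/t)$ with $r_t=(1-t)/\sqrt t$. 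The integrand of \eqref{eq:kl:a2} is then $\frac{1-t^2}{2t^2}\,\bE[\cdots]_{r=r_t}$, while $-r_t\dot r_t=\frac{1-t^2}{2t^2}$. This also matches the Gaussian-source value for variance-exploding diffusions, $\frac12\int g^2\,\bE|\nabla\log p-\hat s|^2\,\rd t=\int r\,\bE[\cdots]\,\rd r$ with $g^2=\frac{\rd}{\rd t}r^2$, which the paper says $\mathrm{KL}^\star$ coincides with. So the factor $2$ in the statement, and in the intermediate display of the paper's ``direct calculation'', appears to come from the same lost $\frac12$. Once Steps~2--3 are corrected, your argument proves schedule invariance with constant $1$. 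It cannot, and should not, be forced to produce the constant $2$.
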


Thus, after optimal tuning, different interpolation schedules are statistically equivalent in the sense of path-space relative entropy: the minimized KL divergence depends only on the accuracy of the score estimates across noise scales. Schedules can still affect optimization landscapes and numerical behavior in practice; these effects are not the focus of the present analysis, and we include a brief empirical illustration in Appendix~\ref{sec-dotbeta-0-improved-training} for completeness.

\subsubsection{Numerical illustrations}

We complement the theory with two experiments (Section~\ref{sec-numerical illustration}): a one-dimensional Ornstein--Uhlenbeck forecasting target, for which the path-space KL divergence between the exact and learned generative SDEs can be evaluated directly and the F\"ollmer choice is confirmed to dominate the alternatives by one to two orders of magnitude; and a two-dimensional Kolmogorov-flow data-assimilation task, where the F\"ollmer schedule yields the smallest posterior-mean error among the three diffusion coefficients tested.

\subsection{Related work}

\subsubsection{Generative diffusions and flow matching}

Diffusion models \cite{sohl2015deep,ho2020denoising} and score-based generative models \cite{song2020score} construct generative dynamics by reversing a noising process \cite{anderson1982reverse,haussmann1986time}, with the drift expressed in terms of score functions \cite{song2019generative}. Flow matching \cite{lipman2022flow}, rectified flows \cite{liu2022flow}, and stochastic interpolant methods \cite{albergo2022building,albergo2023stochastic} estimate velocity fields or stochastic dynamics that reproduce prescribed marginals along an interpolation path. The stochastic interpolant viewpoint \cite{albergo2023stochastic} highlights in particular the freedom to modify the diffusion coefficient \emph{a~posteriori} without changing the marginals; this is the starting point for our variational analysis.

Most works in this area use a Gaussian reference at the initial time. The point-source setting studied here is motivated by probabilistic forecasting and data assimilation \cite{chen2024probabilistic}, and has been adopted in applications such as weather prediction \cite{kossaifi2026demystifying}, cosmological simulation \cite{cuesta-lazaro2024joint}, and other scientific problems \cite{lim2024elucidating,mucke2025physics,sabti2025generative,chen2025flowdas,schiodt2025generative,yasuda2025probabilistic,horowitz2025baryonbridge}. Our contribution is to give a unified variational and analytic treatment of this regime, and to connect it to F\"ollmer processes. The variational optimality of F\"ollmer processes was stated informally in the appendix of our previous work \cite{chen2024probabilistic}; we formulate it rigorously here. The optimal diffusion coefficient identified is also related to the memoryless noise schedule used in stochastic optimal control for fine-tuning flow and diffusion generative models \cite{domingo2024adjoint}.

\subsubsection{F\"ollmer processes and Schr\"odinger bridges}

F\"ollmer \cite{follmer1986time} introduced the eponymous process by time-reversing a Wiener process conditioned on its terminal value, and showed that it minimizes path-space relative entropy subject to endpoint constraints. This fits into the broader framework of the Schr\"odinger bridge problem \cite{schrodinger1932theorie,leonard2014survey,chen2021stochastic}, in which one seeks the path measure closest in relative entropy to a given reference process, under prescribed marginal constraints. Classical analyses characterize the optimal drift through Hamilton--Jacobi--Bellman equations or coupled Schr\"odinger systems.

Our work is complementary to this literature. Rather than fixing the reference dynamics in advance and solving a control problem, we start from a parametric family of diffusions with prescribed marginals, indexed by the diffusion coefficient $g_t$. Optimizing a path-space KL divergence over this family leads to a F\"ollmer process, and simultaneously identifies the underlying reference diffusion in terms of the interpolation schedules. This leads to an alternative variational characterization of F\"ollmer processes, tailored to the statistical setting in which the drift must be estimated from data.

F\"ollmer processes and Schr\"odinger bridges (and optimal transport) have been studied and applied in a wide range of contexts, including sampling from distributions with intractable normalization constants \cite{zhang2021path,huang2021schrodinger,jiao2021convergence,vargas2023bayesian}, generative modeling and Bayesian inference \cite{tzen2019theoretical,wang2021deep,debortoli2021diffusion,liu20232,peluchetti2023non,shi2024diffusion,pooladian2025plug}, as well as connections to stochastic analysis, functional inequalities, and measure concentration \cite{lehec2013representation,eldan2018regularization,eldan2020stability,klartag2023spectral}. In particular, efficient algorithms motivated by diffusion models have been proposed for estimating Schr\"odinger bridges from data \cite{debortoli2021diffusion, shi2024diffusion, pooladian2025plug}. Our work characterizes the variational optimality of this bridge in generative diffusions in the point-mass, or F\"ollmer, setting.

\subsection{Organization}

Section~\ref{sec:Stochastic Interpolants} introduces stochastic interpolants with a point source and derives the baseline generative diffusion, including regularity properties of the drift.
Section~\ref{sec:optimal-diffusion-Follmer} develops the \emph{a~posteriori} tuning of the diffusion coefficient, identifies the KL-optimal choice, and establishes the connection with F\"ollmer processes, including conditional-expectation formulas for the F\"ollmer drift.
Section~\ref{sec:invariance} proves schedule invariance of the minimized path-space KL divergence.
Section~\ref{sec-numerical illustration} provides numerical illustrations on a one-dimensional Ornstein--Uhlenbeck forecasting target and on a two-dimensional Kolmogorov-flow data-assimilation problem.
Section~\ref{sec:conclusions} concludes.
Proofs of the main theorems are given in the body; longer technical arguments and additional generalizations are deferred to the appendices.

% ============================================================
\section{Stochastic Interpolants with a Point Source}
\label{sec:Stochastic Interpolants}
% ============================================================

We construct generative diffusions that transport a Dirac mass at the origin to a target distribution $\mu_\star$ on $\bR^d$. Taking the point source at zero entails no loss of generality and simplifies notation; the results extend immediately to an arbitrary point source by translation.

\subsection{Definitions and assumptions}
\label{sec:Notations and assumptions}

For the regularity analysis of the drift, we work under the following assumption on the target distribution.

\begin{assumption}
\label{assum:bounded-support-smoothed-assumption}
    The target $\mu_\star$ is the convolution of a compactly supported distribution with $\sfN(0,\eta^2 \mathrm{I})$ for some $\eta > 0$.
\end{assumption}

This assumption is standard in the theoretical analysis of diffusion models \cite{chen2023sampling,gao2023gaussian}. It is obtained by adding small Gaussian noise to a compactly supported distribution, which is a natural idealization of finite datasets.

\begin{remark}
\label{rem:compact-support}
All results in this paper remain valid rigorously for compactly supported targets $\mu_\star$ (without Gaussian smoothing) on the time interval $[0, 1-\delta]$ for any $\delta > 0$. Target distributions with exponential tails can also be handled, where additional properties of the regularity of the drift with connection to the choice of the interpolation schedule can be analyzed; see Appendix~\ref{sec:singular behaviors of baseline SDEs at initial time}.
\end{remark}

\begin{definition}
\label{def:def1}
The \emph{stochastic interpolant} $I_t$ is defined by
\begin{equation}
    \label{eq:stochinterpolant}
    I_t = \beta_t \, x_\star + \sigma_t W_t,\quad t\in[0,1],
\end{equation}
where $x_\star \sim \mu_\star$ and $(W_t)_{t\in[0,1]}$ is a standard Wiener process independent of $x_\star$, and the interpolation schedules satisfy:
\begin{itemize}
\item  $\beta,\sigma\in C^1([0,1])$, \quad $\dot\beta_t >0$, \quad $\dot\sigma_t<0$ \quad for all $t\in[0,1]$;
\item $\beta_0=0$, \quad $\beta_1=1$, \quad $\sigma_0>0$, \quad $\sigma_1=0$.
\end{itemize}
\end{definition}

By construction, $I_0 = 0$ and $I_1 = x_\star \sim \mu_\star$, so the interpolant bridges the Dirac mass at the origin and the target distribution.

\subsection{Baseline generative diffusion}
\label{sec:Stochastic interpolation between targets and Brownian motions}

We now give the explicit Markov-diffusion construction underlying the informal statement in Theorem~\ref{thm:1:b_informal}, together with the drift formula and its regularity.

\begin{theorem}
\label{thm:1:b}
Let $I_t$ be the stochastic interpolant of Definition~\ref{def:def1}. Define
\begin{equation}
\label{eq:b:def:app}
\begin{aligned}
    &\forall t \in (0,1]: \quad x_t  = \beta_t \, x_\star + \sigma_t \sqrt{t}\,z , \\
    &\forall (t,x) \in (0,1] \times \R^d : \quad b_t(x) = \mathbb{E}\!\left[\dot\beta_t \, x_\star + \dot \sigma_t  \sqrt{t} \,z\;\middle|\;x_t  = x\right],
\end{aligned}
\end{equation}
where the expectation is over $x_\star \sim \mu_\star$ and $z\sim \sfN(0,\mathrm{I})$, $z\perp x_\star$, conditional on $x_t = x$. Set also $b_0(0) = \dot\beta_0 \bE[x_\star]$. Then, under Assumption~\ref{assum:bounded-support-smoothed-assumption}, the SDE
\begin{equation}
    \label{eq:sde}
    {\rm d}X_t = b_t(X_t) \,{\rm d}t + \sigma_t \, {\rm d}W_t, \quad X_0 = 0
\end{equation}
satisfies $\mathrm{Law}(X_t) = \mathrm{Law}(I_t)$ for all $t \in [0,1]$, and in particular $X_1 \sim \mu_\star$. In addition, the drift $b_t(x)$ is continuous in $(t,x)$ on $(0,1] \times \bR^d$ and globally Lipschitz in $x$ uniformly over $t \in (0, 1]$.
\end{theorem}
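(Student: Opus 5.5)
The proof has two parts: the marginal-matching statement and the regularity of the drift. For matching, note first that $\sigma_t W_t \overset{d}{=} \sigma_t\sqrt{t}\,z$ for each fixed $t$, so $\mathrm{Law}(I_t)=\mathrm{Law}(x_t)$, with density $\rho_t$ for $t\in(0,1)$.

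\textbf{Step 1: a transport equation for $\rho_t$.} Differentiating $t\mapsto x_t$ gives $\dot x_t=\dot\beta_t x_\star + \frac{\rd}{\rd t}(\sigma_t\sqrt t)\,z$. For a test function $\varphi$,
\[
\frac{\rd}{\rd t}\bE[\varphi(x_t)]=\bE\!\left[\nabla\varphi(x_t)\cdot \dot x_t\right],
\]
so $\rho_t$ solves the transport equation $\partial_t\rho_t+\nabla\cdot(\rho_t v_t)=0$. The velocity is
\[
v_t(x)=\bE\!\left[\dot\beta_t x_\star+(\dot\sigma_t\sqrt t+\tfrac{\sigma_t}{2\sqrt t})z\,\middle|\,x_t=x\right].
\]

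\textbf{Step 2: rewrite the extra term as a diffusion.} Gaussian integration by parts (Tweedie) gives $\bE[z\mid x_t=x]=-\sigma_t\sqrt t\,\nabla\log\rho_t(x)$. Hence
\[
v_t=b_t-\tfrac12\sigma_t^2\nabla\log\rho_t .
\]
Using $\nabla\cdot(\rho\nabla\log\rho)=\Delta\rho$, the density $\rho_t$ satisfies the Fokker--Planck equation
\[
\partial_t\rho_t+\nabla\cdot(b_t\rho_t)=\tfrac12\sigma_t^2\Delta\rho_t .
\]
This is exactly the Fokker--Planck equation of \eqref{eq:sde}.

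\textbf{Step 3: well-posedness and identification of the law.} By the Lipschitz property proved below, \eqref{eq:sde} has a unique strong solution. The Fokker--Planck equation with a Lipschitz drift has a unique solution in the class of finite-second-moment measure-valued curves, for instance via the superposition principle or a duality argument with the backward Kolmogorov equation. Both curves start at $\delta_0$: for $\mathrm{Law}(x_t)$ this follows from weak continuity as $t\to0$. Hence $\mathrm{Law}(X_t)=\mathrm{Law}(I_t)$ on $[0,1)$, and the case $t=1$ follows by continuity in $t$.

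\textbf{Step 4: regularity of $b_t$.} Write $\mu_\star=\nu*\sfN(0,\eta^2\mathrm{I})$ with $\nu$ supported in a ball $B_R$. Then $x_t=\beta_t y+\sqrt{\beta_t^2\eta^2+t\sigma_t^2}\,z'$ with $y\sim\nu$. Set $s_t^2=\beta_t^2\eta^2+t\sigma_t^2$, which is bounded below by a positive constant on any $[\epsilon,1]$, and in fact on all of $(0,1]$ for the relevant ratios. Expressing $x_\star$ and $z$ through $(y,z')$, the conditional expectations $\bE[x_\star\mid x_t]$ and $\bE[z\mid x_t]$ become affine functions of $x$ plus multiples of $\bE[y\mid x_t=x]$. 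The gradient of $\bE[y\mid x_t=x]$ is
\[
\frac{\beta_t}{s_t^2}\,\mathrm{Cov}(y\mid x_t=x),
\]
whose norm is at most $\beta_tR^2/s_t^2$. This is bounded uniformly for $t$ away from $0$, and continuity in $(t,x)$ follows by dominated convergence.

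\textbf{Main obstacle: uniformity as $t\to0$.} Here $s_t^2\sim t\sigma_0^2$ and $\beta_t\sim\dot\beta_0 t$, so the covariance term in Step 4 is of order $\beta_t/s_t^2=O(1)$. The difficulty is the coefficient multiplying $x$ in $\bE[z\mid x_t]$, which scales like $\sqrt t\,\dot\sigma_t\cdot\sigma_t\sqrt t/s_t^2 = O(1)$ and so should stay bounded. The plan is to expand all coefficients explicitly in $\beta_t,\sigma_t,t$ and check that each is $O(1)$ as $t\to0$, using $\beta_t=O(t)$ and $\sigma_0>0$. This is also what makes the choice $b_0(0)=\dot\beta_0\bE[x_\star]$ consistent.
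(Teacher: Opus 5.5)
Your proposal is correct. The regularity half is the paper's own argument from Appendix~A. There you split $x_\star=y+\eta z_1$, reduce to $\bE[y\mid x_t=x]$ under Gaussian noise of variance $s_t^2=\beta_t^2\eta^2+t\sigma_t^2$, and bound the Jacobian $\frac{\beta_t}{s_t^2}\mathrm{Cov}(y\mid x_t=x)$. The $t\to0^+$ check you leave as a plan does close: since $\beta_t\sim\dot\beta_0 t$ and $s_t^2\sim\sigma_0^2 t$, the ratios $\beta_t/s_t^2$, $t\sigma_t\dot\sigma_t/s_t^2$, $\beta_t\dot\beta_t\eta^2/s_t^2$ and $t\sigma_t^2\dot\beta_t/s_t^2$ all have finite limits.

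The marginal-matching half takes a different route. The paper applies It\^o's formula to $\phi(I_t)$ for the non-Markovian interpolant. The term $\tfrac12\sigma_t^2\Delta\phi$ then comes straight from the quadratic variation of $\sigma_t W_t$, and $b_t$ appears after replacing $(x_\star,W_r)$ by $(x_\star,\sqrt r\,z)$ and conditioning on $x_r$. This is a mimicking argument that needs neither a density nor the score. You instead differentiate the smooth map $t\mapsto\beta_t x_\star+\sigma_t\sqrt t\,z$, obtain the probability-flow velocity $v_t$, and use Tweedie to convert its $\frac{\sigma_t}{2\sqrt t}z$ component into $-\tfrac12\sigma_t^2\nabla\log\rho_t$, i.e.\ a Laplacian. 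This costs the Tweedie/Stein identity, and it passes through a velocity that is singular like $x/(2t)$ at $t=0$; that is harmless, since you only use it for $t>0$. In exchange it exhibits the decomposition $b_t=v_t+\tfrac12\sigma_t^2\nabla\log\rho_t$, from which the whole family of Theorem~\ref{thm:tunning_g} is immediate, because $b_t^g=v_t+\tfrac12 g_t^2\nabla\log\rho_t$.

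Your Step~3 also supplies something the paper leaves implicit. The paper concludes $\mathrm{Law}(X_t)=\mu_t$ simply from the two weak equations coinciding; you justify that step by proving uniqueness for the Fokker--Planck equation with Lipschitz drift, via the superposition principle and well-posedness of the SDE.
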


\begin{remark}
     Since $W_t \stackrel{d}{=} \sqrt{t}\,z$ at each $t\in[0,1]$, the process $x_t = \beta_t x_\star + \sigma_t\sqrt{t}\,z$ has the same law at each time $t$ as the interpolant $I_t$. Its introduction is useful for designing simulation-free estimation methods, since sampling $x_t$ requires only drawing $x_\star$ and $z$, without simulating the Brownian motion.
\end{remark}

\begin{proof}[Proof of Theorem~\ref{thm:1:b}] 
Let $\mu_t(\rd x)$ denote the law of $I_t$, and let $\phi \in C^2_b(\R^d)$ be a test function. By definition,
\begin{equation}
    \label{eq:chracteristic}
    \forall t\in [0,1]: \quad \int_{\R^d} \phi(x) \,\mu_t({\rm d}x)  = \E[\phi(I_t)].
\end{equation}
Applying It\^o's formula to $\phi(I_t)$ gives
\begin{equation}
\label{eq:dcharact}
{\rm d} \phi( I_t) = (\dot \beta_t \, x_\star + \dot \sigma_t \, W_t) \cdot \nabla \phi( I_t) \,{\rm d}t + \tfrac12\sigma_t^2 \Delta \phi(I_t) \,{\rm d}t + \sigma_t\nabla \phi(I_t) \cdot {\rm d}W_t.
\end{equation}
Taking expectations and using It\^o's isometry, we obtain
\begin{equation}
\label{eq:dcharact:E}
\E[\phi( I_t)]  =  \phi(0) + \int_0^t \left(\E\big[(\dot \beta_r\, x_\star + \dot \sigma_r W_r) \cdot \nabla \phi(I_r)\big]  + \tfrac12\sigma^2_r \E[\Delta \phi(I_r)] \right) {\rm d}r.
\end{equation}
Since $I_t\stackrel{d}{=}x_t$ and $W_t \stackrel{d}{=} \sqrt{t}\,z$ at each $t\in[0,1]$, we may replace $I_r$ by $x_r$ and $W_r$ by $\sqrt{r}\,z$:
\begin{equation}
\label{eq:t:derv}
    \int_{\R^d} \phi(x) \,\mu_t({\rm d}x)  =  \phi(0)+ \int_0^t \left(\E\big[(\dot \beta_r \,x_\star + \dot \sigma_r \sqrt{r} \, z ) \cdot \nabla \phi(x_r)\big]+ \tfrac12\sigma^2_r \E[\Delta \phi(x_r)]\right) {\rm d}r.
\end{equation}
Conditioning on $x_r = x$ via the tower property and using the definition of $b_r$ in~\eqref{eq:b:def:app}, we arrive at
\begin{equation}
\label{eq:t:derv:2}
    \int_{\R^d} \phi( x) \,\mu_t({\rm d}x) = \phi(0) + \int_0^t \int_{\R^d}  \left(  b_r(x) \cdot \nabla \phi(x)  + \tfrac12 \sigma^2_r  \Delta\phi(x) \right) \mu(r,{\rm d}x) \,{\rm d}r.
\end{equation}
Repeating the same calculation for $\E[\phi(X_t)]$ where $X_t$ solves the SDE~\eqref{eq:sde} yields the same weak equation~\eqref{eq:t:derv:2}, showing that $\mathrm{Law}(X_t) = \mu_t(\cdot)$ for all $t$.

The Lipschitz bound on $b_t$ follows from the analysis in Appendix~\ref{appendix-Lipschitz regularity of the drift}.
\end{proof}

\subsection{Statistical estimation from data}
\label{sec:Learning generative diffusions from data}

Since $b_t(x)$ is defined as a conditional expectation \eqref{eq:b:def:app}, it admits a variational characterization as the minimizer of a square loss. For any candidate drift $\hat b$, we have the bias-variance decomposition
\begin{equation}
\begin{aligned}
    \int_0^1 \E \big[|\hat b_t(x_t) - b_t(x_t)|^2\big]\,{\rm d}t
     & = \int_0^1 \E \big[|\hat b_t(x_t) - ( \dot \beta_t \, x_\star + \dot\sigma_t  \sqrt{t} \,z)|^2\big] \,{\rm d}t \\
     & \quad - \int_0^1 \mathrm{Var}[\dot\beta_t x_\star + \dot \sigma_t  \sqrt{t} \,z \mid x_t] \,{\rm d}t.
\end{aligned}
\end{equation}
Since the variance term is independent of $\hat{b}$, estimating the drift reduces to minimizing
\begin{equation}
\label{eqn-loss-for-interpolants}
    L_b[\hat b] = \int_0^1 \E \big[|\hat b_t(x_t) - ( \dot \beta_t \, x_\star + \dot\sigma_t  \sqrt{t} \,z)|^2\big]  \,{\rm d}t.
\end{equation}
The factors $\dot\beta_t$ and $\dot\sigma_t\sqrt{t}$ appearing in the regression target are bounded on $[0,1]$ (since $\beta, \sigma \in C^1([0,1])$), so the loss~\eqref{eqn-loss-for-interpolants} is bounded and non-singular.

Given an estimated drift $\hat{b}_t$, the approximate generative diffusion is
\begin{equation}
    {\rm d}\hat{X}_t = \hat{b}_t(\hat{X}_t) \,{\rm d}t + \sigma_t \, {\rm d}W_t, \quad \hat{X}_0 = 0,
\end{equation}
and $\hat{X}_1$ approximately samples from $\mu_\star$. Any time-discretization scheme applied to this SDE yields a practical sampling algorithm.

Throughout the main article, we assume that the estimator $\hat{b}_t$ is globally Lipschitz in $x$ uniformly over $t \in (0, 1]$, matching the regularity of $b_t$ established in Theorem~\ref{thm:1:b}. This technical assumption is needed for our application of Girsanov's theorem later in this article.

% ============================================================
\section{Optimal Diffusion Coefficients and F\"ollmer Processes}
\label{sec:optimal-diffusion-Follmer}
% ============================================================

We now formalize the freedom in choosing $g_t$ already foreshadowed in \S\ref{sec:setup-main-results}, identify the KL-optimal diffusion coefficient, and show that the resulting process is a F\"ollmer process with respect to a reference diffusion determined by the interpolation schedules alone.

\subsection{A-posteriori tuning of diffusion coefficients}
\label{sec:A-posteriori tuning of diffusion coefficients}

Since $\sigma_t > 0$ on $(0,1)$, the law of $I_t$ is a convolution with a non-degenerate Gaussian and hence admits a density $\rho_t$ with respect to Lebesgue measure for each $t \in (0,1)$. This allows us to express the score $\nabla \log \rho_t$ and tune the diffusion coefficient as follows.

\begin{theorem}
\label{thm:tunning_g}
     Let $g\in C^0([0,1])$ satisfy $g_t > 0$ on $(0,1)$, and assume the limits $\lim_{t\to0^+} t^{-1}(g^2_t -\sigma^2_t)$ and $\lim_{t\to1^-} g^2_t \sigma^{-1}_t$ exist and are finite. Let $b_t$ be the drift from~\eqref{eq:b:def:app}, and define
\begin{equation}
    \label{eq:df:A:c}
    A_t = \bigl(t\sigma_t (\dot\beta_t \sigma_t -\beta_t  \dot\sigma_t )\bigr)^{-1} .
\end{equation} 
Then the score of the time-$t$ density $\rho_t$ of $X_t \stackrel{d}{=} I_t$ is given by
\begin{equation}
    \label{eq:score:app}
    \nabla \log \rho_t(x) = A_t \bigl(\beta_t b_t(x) - \dot \beta_t \, x \bigr), \qquad (t,x)\in (0,1)\times\R^d ,
\end{equation}
and the drift
    \begin{equation}
\label{eq:drift:bc-appendix:a}
        b^g_t(x) = b_t(x) + \tfrac12 (g^2_t-\sigma^2_t) A_t \bigl(\beta_t b_t(x) - \dot \beta_t \, x \bigr)
\end{equation}
is well-defined for all $(t,x)\in (0,1)\times\R^d$, with finite limits at $t=0$ and $t=1$. The SDE
\begin{equation}
    \label{eq:sde-appenfix}
    {\rm d}X^g_t= b_t^g(X^g_t) \,{\rm d}t + g_t \, {\rm d}W_t\, , \quad X^g_0 = 0 ,
\end{equation}
satisfies $\mathrm{Law}(X^g_t)= \mathrm{Law}(I_t)$ for all $t\in [0,1]$, and in particular $X^g_1\sim \mu_\star$.
\end{theorem}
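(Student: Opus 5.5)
The plan has three steps: a Tweedie-type identity for the score, a boundary analysis of the coefficient multiplying it, and a Fokker--Planck argument.

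\textbf{Step 1: score formula.} Since $x_t=\beta_t x_\star+\sigma_t\sqrt t\,z$ with $z$ Gaussian and independent of $x_\star$, Gaussian integration by parts (Tweedie's formula) gives, for $t\in(0,1)$,
\[
\nabla\log\rho_t(x)=-\frac{\E[z\mid x_t=x]}{\sigma_t\sqrt t}.
\]
Write $m_\star(x)=\E[x_\star\mid x_t=x]$ and $m_z(x)=\E[z\mid x_t=x]$. These satisfy the two linear relations
\[
x=\beta_t m_\star+\sigma_t\sqrt t\,m_z,\qquad b_t=\dot\beta_t m_\star+\dot\sigma_t\sqrt t\,m_z .
\]
Eliminating $m_\star$ gives
\[
\beta_t b_t-\dot\beta_t x=\sqrt t\,(\beta_t\dot\sigma_t-\dot\beta_t\sigma_t)\,m_z .
\]
Substituting into Tweedie's formula yields $\nabla\log\rho_t=A_t(\beta_t b_t-\dot\beta_t x)$, which is~\eqref{eq:score:app}. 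The denominator of $A_t$ is nonzero on $(0,1)$ because $\dot\beta_t\sigma_t-\beta_t\dot\sigma_t>0$ there.

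\textbf{Step 2: well-posedness of $b^g$ at the endpoints.} On $(0,1)$ the drift $b^g$ is clearly well-defined, since $b_t$ is continuous and Lipschitz by Theorem~\ref{thm:1:b}. At the endpoints I would write the score term as
\[
\tfrac12(g_t^2-\sigma_t^2)A_t(\beta_t b_t-\dot\beta_t x).
\]

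Near $t=0$, we have $\dot\beta_t\sigma_t-\beta_t\dot\sigma_t\to\dot\beta_0\sigma_0>0$. Hence $A_t\sim(t\sigma_0^2\dot\beta_0)^{-1}$, and $(g_t^2-\sigma_t^2)A_t$ has a finite limit by the hypothesis on $t^{-1}(g_t^2-\sigma_t^2)$. The factor $\beta_t b_t-\dot\beta_t x$ is bounded locally uniformly in $x$, because $b_t$ is Lipschitz uniformly in $t$ with $b_t(0)$ controlled. So the term converges.

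Near $t=1$, we have $\dot\beta_t\sigma_t-\beta_t\dot\sigma_t\to-\dot\sigma_1>0$, so $A_t\sim(-\dot\sigma_1\sigma_t)^{-1}$. The $-\sigma_t^2A_t$ part tends to $0$, and $g_t^2A_t\sim g_t^2/(-\dot\sigma_1\sigma_t)$ has a finite limit by the hypothesis on $g_t^2\sigma_t^{-1}$. It remains to show that $\beta_tb_t(x)-\dot\beta_t x$ has a limit as $t\to1$. Equivalently, $\sqrt t\,(\beta_t\dot\sigma_t-\dot\beta_t\sigma_t)\,m_z$ should converge. Under Assumption~\ref{assum:bounded-support-smoothed-assumption}, the law of $x_1$ has a smooth density with a finite score, so $m_z=-\sigma_t\sqrt t\,\nabla\log\rho_t$ stays bounded. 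It in fact tends to $0$, since $\sigma_t\to0$ while $\nabla\log\rho_t\to\nabla\log\rho_1$ continuously, by the smoothness from the Gaussian convolution with $\eta>0$. Either way the limit exists. The precise limiting regularity would rely on the estimates of Appendix~\ref{appendix-Lipschitz regularity of the drift}.

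\textbf{Step 3: marginals.} For $\phi\in C_b^2$, apply It\^o's formula to the SDE~\eqref{eq:sde-appenfix}, exactly as in the proof of Theorem~\ref{thm:1:b}. The generator is $b^g_t\cdot\nabla\phi+\frac12 g_t^2\Delta\phi$. Integrating against $\rho_t$ and using $\int \rho_t\nabla\log\rho_t\cdot\nabla\phi=-\int\rho_t\Delta\phi$ gives
\[
\int\rho_t\Bigl(b^g_t\cdot\nabla\phi+\tfrac12g_t^2\Delta\phi\Bigr)=\int\rho_t\Bigl(b_t\cdot\nabla\phi+\tfrac12\sigma_t^2\Delta\phi\Bigr).
\]
So $\mu_t=\mathrm{Law}(I_t)$ solves the weak equation of the $g$-SDE. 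Conversely, $\mathrm{Law}(X^g_t)$ solves the same linear Fokker--Planck equation with the same Dirac initial datum. The drift $b^g$ is Lipschitz in $x$ (as a linear combination of $b_t$ and $x$ with bounded coefficients, by Step~2), so the weak solution is unique. Hence $\mathrm{Law}(X^g_t)=\mathrm{Law}(I_t)$ for all $t\in[0,1]$, and $X^g_1\sim\mu_\star$ follows by continuity of paths.

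\textbf{Main obstacle.} I expect the hardest step to be the boundary behavior at $t=1$ in Step~2. There the factor $A_t$ blows up like $\sigma_t^{-1}$, and one needs quantitative control of $m_z$ near $t=1$ rather than mere boundedness. I would try first to derive that control from the Gaussian smoothing in Assumption~\ref{assum:bounded-support-smoothed-assumption}.
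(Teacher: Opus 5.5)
Your proposal is correct and follows essentially the same route as the paper. Tweedie's formula combined with eliminating $\mathbb{E}[x_\star\mid x_t=x]$ from the two linear relations gives the score, the $t^{-1}$ and $\sigma_t^{-1}$ singularities of $(g_t^2-\sigma_t^2)A_t$ are absorbed by the hypotheses on $g$, and the marginals agree because the Fokker--Planck equations coincide via $\nabla\cdot(\rho_t\nabla\log\rho_t)=\Delta\rho_t$; your explicit appeal to uniqueness of the Fokker--Planck solution for the Lipschitz drift $b^g$ is a step the paper leaves implicit.

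The obstacle you flag at $t=1$ is not real. Once the hypothesis on $g_t^2\sigma_t^{-1}$ has absorbed the blow-up of $A_t$, the remaining factor $\beta_t b_t(x)-\dot\beta_t x$ converges by continuity of $b$ on $(0,1]\times\mathbb{R}^d$, in fact to $0$ since $b_1(x)=\dot\beta_1 x$, so no quantitative control of $\mathbb{E}[z\mid x_t=x]$ is needed.
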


Note that $A_t > 0$ for all $t \in (0,1)$: since $\beta_0 = 0$, $\beta_1 = 1$, $\sigma_1 = 0$, $\dot\beta_t > 0$, and $\dot\sigma_t < 0$, we have $\dot\beta_t \sigma_t - \beta_t \dot\sigma_t > 0$ on $(0,1)$, and $t\sigma_t > 0$ on the same interval.

\begin{proof}[Proof of Theorem \ref{thm:tunning_g}]
 Since $x_t = \beta_t x_\star + \sqrt{t}\,\sigma_t z$ with $x_t \stackrel{d}{=} I_t$, we have
\begin{equation}
\label{eq:stein}
\nabla \log \rho_t(x) = -\frac{1}{\sqrt{t}\,\sigma_t}\,\mathbb{E}[z\mid x_t=x], \qquad (t,x) \in (0,1) \times\R^d .
\end{equation}
This identity connecting the denoising conditional expectation to the score function is known as Tweedie's formula \cite{efron2011tweedie} and can be derived from Stein's identity.

From the definitions of $b_t$ and $x_t$,
\begin{equation}
\begin{aligned}
    b_t(x) &= \dot{\beta}_t\,\mathbb{E}[x_\star\mid x_t =x] + \sqrt{t}\, \dot{\sigma}_t\,\mathbb{E}[z\mid x_t =x] ,\\
    x &= \beta_t\, \mathbb{E}[x_\star\mid x_t =x] + \sqrt{t}\, \sigma_t \,\mathbb{E}[z\mid x_t =x] ,
\end{aligned}
\end{equation}
from which we solve for $\mathbb{E}[z\mid x_t = x]$ and insert into~\eqref{eq:stein} to obtain~\eqref{eq:score:app}.

To verify that~\eqref{eq:sde-appenfix} preserves the marginals, observe that the Fokker--Planck equations for $b_t^g$ and $b_t$ coincide:
\[
\tfrac12 g_t^2 \Delta \rho_t - \nabla\cdot(b_t^g \rho_t) = \tfrac12 \sigma_t^2 \Delta \rho_t - \nabla\cdot(b_t \rho_t) ,
\]
since $\nabla\cdot(\rho_t \nabla\log\rho_t) = \Delta\rho_t$.

It remains to verify that $b_t^g$ is well-defined at the boundaries. Writing~\eqref{eq:drift:bc-appendix:a} explicitly:
\begin{equation}
\label{eq:bgs:full}
    b^g_t(x) = b_t(x) + \frac{g_t^2-\sigma_t^2}{2t\sigma_t}\cdot\frac{\beta_t b_t(x) -\dot{\beta}_t x}{\sigma_t\dot{\beta}_t-\dot{\sigma}_t\beta_t} .
\end{equation}
The ratios $\beta_t(\sigma_t\dot{\beta}_t-\dot{\sigma}_t\beta_t)^{-1}$ and $\dot\beta_t(\sigma_t\dot{\beta}_t-\dot{\sigma}_t\beta_t)^{-1}$ have finite limits at $t=0$ and $t=1$ by the assumptions on $\beta$ and $\sigma$. The only potentially singular factor is $(g_t^2-\sigma_t^2)/(2t\sigma_t)$, which has a $t^{-1}$ singularity at $t=0$ and a $\sigma_t^{-1}$ singularity at $t=1$. Both are removed by the hypotheses on $g$. More general $g$ are discussed in Remark \ref{remark-assumption-g-beta}.
\end{proof}

Theorem~\ref{thm:tunning_g} yields a family of generative SDEs parametrized by $g$. With the baseline drift $b_t$ from Theorem~\ref{thm:1:b}, the exact process is
\begin{equation}
\label{eq:sde-tuning-g-exact}
    {\rm d}X^g_t= \bigl(1+\tfrac12 \beta_t A_t (g_t^2-\sigma_t^2) \bigr) b_t(X^g_t) \,{\rm d}t - \tfrac12(g_t^2-\sigma_t^2) A_t \dot\beta_t X_t^g \,{\rm d}t + g_t \, {\rm d}W_t, \quad X^g_0 = 0 ,
\end{equation}
and the corresponding estimated process, using $\hat b_t$ in place of $b_t$, is
\begin{equation}
    \label{eq:sde-tuning-g-approx}
    {\rm d}\hat X^g_t= \bigl(1+\tfrac12 \beta_t A_t (g_t^2-\sigma_t^2) \bigr) \hat b_t(\hat X^g_t) \,{\rm d}t - \tfrac12(g_t^2-\sigma_t^2)A_t \dot\beta_t \hat X_t^g  \,{\rm d}t + g_t  \,{\rm d}W_t, \quad \hat X^g_0 = 0 .
\end{equation}
The estimated process can be used for generation without re-estimating the drift: the diffusion coefficient is modified \textit{at inference time}.

\subsection{KL-optimal diffusion coefficient}
\label{sec:Optimizing the KL statistical estimation error}

Given the family~\eqref{eq:sde-tuning-g-approx}, we seek the diffusion coefficient that minimizes the path-space KL divergence $\mathrm{KL}(\mathbb{P}_{X^g}\|\mathbb{P}_{\hat X^g})$ between the exact and estimated generative processes.

\begin{theorem}
\label{prop-min-KL}
    For each $\delta > 0$, $\mathrm{KL}(\mathbb{P}_{X^g}|_{[0,1-\delta]}\|\mathbb{P}_{\hat X^g}|_{[0,1-\delta]})$ is minimized over $g$ by 
    \begin{equation}
    \label{eqn-follmer-gt}
        g_t^{\rm F} = \left|2t\sigma^2_t \frac{\rm d}{{\rm d}t}\log\frac{\beta_t }{ \sqrt{t}\,\sigma_t }\right|^{1/2}.
    \end{equation}
    Moreover, $g_t^{\rm F}$ extends continuously to $[0,1]$ with $g_1^{\rm F} = 0$, and satisfies the boundary limit assumptions in Theorem~\ref{thm:tunning_g} provided $\dot\beta_0 > 0$ and $\ddot\beta_0$ exists.
\end{theorem}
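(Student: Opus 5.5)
The plan is to compute the path-space KL divergence with Girsanov's theorem on $[0,1-\delta]$. Both processes start at $0$ and have the same diffusion coefficient $g_t$, which is positive on $(0,1)$. The drifts $b^g_t$ and $\hat b^g_t$ are Lipschitz: this follows from Theorem~\ref{thm:1:b}, from the standing assumption on $\hat b$, and from the boundary limits in Theorem~\ref{thm:tunning_g}. Girsanov therefore applies and gives
\begin{equation*}
\mathrm{KL}\bigl(\mathbb{P}_{X^g}|_{[0,1-\delta]}\,\big\|\,\mathbb{P}_{\hat X^g}|_{[0,1-\delta]}\bigr)=\frac12\int_0^{1-\delta} g_t^{-2}\,\mathbb{E}\bigl[|b^g_t(X^g_t)-\hat b^g_t(X^g_t)|^2\bigr]\,\rd t .
\end{equation*}
Comparing \eqref{eq:sde-tuning-g-exact} with \eqref{eq:sde-tuning-g-approx}, the linear terms $-\tfrac12(g_t^2-\sigma_t^2)A_t\dot\beta_t x$ cancel. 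The difference of drifts is therefore $c_t(g)\,(b_t-\hat b_t)$, with $c_t(g)=1+\tfrac12\beta_tA_t(g_t^2-\sigma_t^2)$. Since $\mathrm{Law}(X^g_t)=\mathrm{Law}(I_t)=\rho_t$ for every $g$ (Theorem~\ref{thm:tunning_g}), the error $e_t:=\mathbb{E}_{\rho_t}|b_t-\hat b_t|^2$ does not depend on $g$. The KL then equals $\tfrac12\int_0^{1-\delta} e_t\, c_t(g)^2 g_t^{-2}\,\rd t$.

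This reduces the problem to a pointwise minimization in $s=g_t^2>0$ for each fixed $t$. The function to minimize is $h(s)=(\alpha+\gamma s)^2/s$, with $\gamma=\tfrac12\beta_tA_t>0$ and $\alpha=1-\tfrac12\beta_tA_t\sigma_t^2$. When $\alpha>0$, calculus gives the minimizer $s=\alpha/\gamma$, and the minimum value is $4\alpha\gamma$. When $\alpha\le 0$, the infimum is $0$, attained at $s=-\alpha/\gamma$. Since $h(s)=\gamma^2 s+2\alpha\gamma+\alpha^2/s$, both cases are covered by the single formula $s=|\alpha|/\gamma$. Because $e_t\ge 0$, minimizing pointwise minimizes the integral, and this holds for every $\hat b$. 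The claim is therefore that $g_t^2=|\alpha_t|/\gamma_t=|2/(\beta_tA_t)-\sigma_t^2|$.

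Next I check that this expression agrees with \eqref{eqn-follmer-gt}. Substituting $A_t$ from \eqref{eq:df:A:c} gives
\begin{equation*}
\frac{2}{\beta_tA_t}-\sigma_t^2=\frac{2t\sigma_t(\dot\beta_t\sigma_t-\beta_t\dot\sigma_t)}{\beta_t}-\sigma_t^2=2t\sigma_t^2\Bigl(\frac{\dot\beta_t}{\beta_t}-\frac{\dot\sigma_t}{\sigma_t}-\frac1{2t}\Bigr),
\end{equation*}
and the bracket is exactly $\frac{\rd}{\rd t}\log\frac{\beta_t}{\sqrt t\,\sigma_t}$.

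Finally, the boundary behaviour. Write $(g^{\rm F}_t)^2=2t\sigma_t^2(\dot\beta_t/\beta_t-\dot\sigma_t/\sigma_t)-\sigma_t^2$. Near $t=1$ this equals $2t\sigma_t\dot\beta_t\sigma_t/\beta_t-2t\sigma_t\dot\sigma_t-\sigma_t^2=O(\sigma_t)$, so $g^{\rm F}_1=0$, and $(g^{\rm F}_t)^2/\sigma_t\to -2\dot\sigma_1$, which is finite. Near $t=0$, use $\beta_t=\dot\beta_0t+\tfrac12\ddot\beta_0t^2+o(t^2)$. Then $t\dot\beta_t/\beta_t=1+\tfrac12(\ddot\beta_0/\dot\beta_0)t+o(t)$. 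Inside the absolute value we get $2\sigma_t^2\,t\dot\beta_t/\beta_t-\sigma_t^2-2t\sigma_t\dot\sigma_t=\sigma_t^2+O(t)$. This gives the continuous extension $g^{\rm F}_0=\sigma_0$, and it also gives
\begin{equation*}
t^{-1}\bigl((g^{\rm F}_t)^2-\sigma_t^2\bigr)\to\sigma_0^2\,\ddot\beta_0/\dot\beta_0-2\sigma_0\dot\sigma_0 .
\end{equation*}
Because the inner expression is positive near $0$, the absolute value is harmless there.

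I expect the main obstacle to be this last step: making sure the expansion at $t=0$ is legitimate with only $C^1$ schedules plus existence of $\ddot\beta_0$, which requires a Taylor expansion with Peano remainder. A secondary point is the degenerate case where $g^{\rm F}_t$ vanishes inside $(0,1)$. There the Girsanov weight $g_t^{-2}$ is singular, but the product $c_t^2/g_t^2$ stays bounded (indeed it is $0$ where $\alpha_t=0$). I would handle this by interpreting the KL as the infimum of $h$ rather than through a literal Girsanov density, or by restricting to schedules with $\alpha_t>0$.
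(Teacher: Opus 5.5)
Your proposal is correct and follows the paper's proof essentially step for step. The shared steps are: Girsanov on $[0,1-\delta]$, with the $g$-independent error $\mathbb{E}|b_t-\hat b_t|^2$ under $\rho_t$; pointwise minimization of $g^{-2}\bigl(1+\tfrac12\beta_tA_t(g^2-\sigma_t^2)\bigr)^2$, which gives $g_t^2=|2/(\beta_tA_t)-\sigma_t^2|$; the same logarithmic-derivative identity; and the same boundary limits, $2|\dot\sigma_1|$ at $t=1$ and $\sigma_0^2\ddot\beta_0/\dot\beta_0-2\sigma_0\dot\sigma_0$ at $t=0$. Your explicit Peano-remainder expansion at $t=0$ and your remark on possible interior zeros of $g^{\rm F}$ add care rather than a different route; the paper glosses over the latter by simply asserting that $g$ is bounded away from zero on $[0,1-\delta]$.
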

\begin{proof}[Proof of Theorem \ref{prop-min-KL}]
On $[0,1-\delta]$, the diffusion coefficient $g_t$ is bounded away from zero. We can apply Girsanov's theorem using a variant of Novikov's condition \cite[Corollary 5.14]{karatzas2014brownian}, which is satisfied because $X_t \stackrel{d}{=} I_t$ has Gaussian tails and $b_t, \hat{b}_t$ are spatially Lipschitz. This yields
\begin{equation}
\label{eq:kl:a2}
    \mathrm{KL} (\mathbb{P}_{X^g}|_{[0,1-\delta]}\|\mathbb{P}_{\hat X^g}|_{[0,1-\delta]})  = \frac12 \int_0^{1-\delta} g_t^{-2} \bigl|1+\tfrac12 \beta_t  A_t(g_t^2-\sigma_t^2)\bigr|^2 L_t \,{\rm d}t ,
\end{equation}
where $L_t = \E[|\hat b_t(I_t) - b_t(I_t)|^2]$ is independent of $g$, using $X_t^g \stackrel{d}{=} I_t$. The minimization reduces to minimizing pointwise in $t$:
\begin{equation}
    \label{eq:min:int:a}
    \psi(g_t^2) := g_t^{-2} \bigl|\gamma_t + \alpha_t g_t^2\bigr|^2, \quad \alpha_t = \tfrac12\beta_t A_t > 0, \quad \gamma_t = 1 - \alpha_t\sigma_t^2 .
\end{equation}
If $\gamma_t \le 0$, the minimum $\psi = 0$ is attained at $g_t^2 = |\gamma_t|/\alpha_t$. If $\gamma_t > 0$, the minimum $\psi = 4\alpha_t\gamma_t$ is attained at $g_t^2 = \gamma_t/\alpha_t$. In both cases the minimizer is $g_t^2 = |\gamma_t|/\alpha_t$. Substituting the expressions for $\alpha_t$ and $\gamma_t$ and using the identity
\begin{equation}
\label{eq:Ds:2:app}
2t\sigma_t(\beta_t^{-1}\dot\beta_t \sigma_t- \dot\sigma_t) -\sigma_t^2 = 2t\sigma^2_t \frac{\rm d}{{\rm d}t}\log\frac{\beta_t }{ \sqrt{t}\,\sigma_t },
\end{equation}
we obtain~\eqref{eqn-follmer-gt}. Since the pointwise minimizer is independent of $\delta$, it is the unique minimizer for every truncation. The continuity of $g_t^{\rm F}$ on $[0,1]$ and the limit $g_1^{\rm F} = 0$ follow from $\sigma_1 = 0$ and the smoothness of the interpolation schedules.

It remains to show that this $g_t$ satisfies the boundary limit assumptions in Theorem \ref{thm:tunning_g} so that it is a valid choice.
The two conditions are that the limits $\lim_{t\to0^+} t^{-1}(g^2_t -\sigma^2_t)$ and $\lim_{t\to1^-} g^2_t \sigma^{-1}_t$ exist. The second condition is always satisfied since
\begin{equation}
    \label{eq:gF:s1}
    \lim_{t\to1^-} |g^\Fo_t|^2\sigma_t^{-1} = \lim_{t\to1^-} |2t (\beta_t^{-1} \dot\beta_t \sigma_t -\dot\sigma_t) -\sigma_t| = |2\dot\sigma_1| < \infty,
\end{equation}
where we used the fact $\sigma_1=0$. For the first condition, we have
\begin{equation*}
    \lim_{t\to0^+} t^{-1} (|g^\Fo_t|^2-\sigma^2_t) = \lim_{t\to0^+} (2\sigma_t (\beta_t^{-1} \dot\beta_t \sigma_t -\dot\sigma_t) -2t^{-1}\sigma^2_t) = 2\lim_{t\to0^+} (\beta_t^{-1} \dot\beta_t -t^{-1})-2\dot\sigma_0\sigma_0 
\end{equation*}
since $\sigma_0>0$ and $\dot\sigma_0<0$. Since $\beta_0=0$, if $\dot\beta_0>0$ and $\ddot\beta_0$ exists, we have
\begin{equation}
    \label{eq:lim}
    \lim_{t\to0^+} (\beta_t^{-1} \dot\beta_t -t^{-1}) = \tfrac12\ddot\beta_0 \dot\beta_0^{-1}\, .
\end{equation}
This implies that
\begin{equation}
    \label{eq:gF:s0a}
    \lim_{t\to0^+} t^{-1} (|g^\Fo_t|^2-\sigma^2_t) =\sigma_0^2 \ddot\beta_0 \dot\beta_0^{-1}-2\dot\sigma_0\sigma_0, \quad \text{if $\dot\beta_0>0$ and $\ddot\beta_0$ exists.}
\end{equation}
Thus, the conditions are verified.
\end{proof}

\begin{remark}
\label{rem:full-interval-KL}
Since the integrand in~\eqref{eq:kl:a2} is non-negative, monotone convergence gives
\[
    \mathrm{KL}(\mathbb{P}_{X^g}\|\mathbb{P}_{\hat X^g}) = \lim_{\delta \to 0^+} \mathrm{KL}(\mathbb{P}_{X^g}|_{[0,1-\delta]}\|\mathbb{P}_{\hat X^g}|_{[0,1-\delta]}).
\]
At the optimizer $g_t = g_t^{\rm F}$, the integrand reduces to $4\alpha_t\gamma_t \, L_t$ when $\gamma_t > 0$ and vanishes when $\gamma_t \le 0$. The limit is finite provided $\int_0^{1} \alpha_t\gamma_t \, L_t \,{\rm d}t < \infty$, and $g_t^{\rm F}$ remains the minimizer of the full-interval divergence.
\end{remark}

\begin{remark}
    The path-space KL divergence serves as a surrogate for the terminal generation error: by the data-processing inequality, $\mathrm{KL}(\mathbb{P}_{X^g}\|\mathbb{P}_{\hat X^g}) \ge \mathrm{KL}(\mathrm{Law}(X_1^g)\|\mathrm{Law}(\hat X_1^g))$. Thus $g^{\rm F}$ minimizes an upper bound on the generation error at $t = 1$.
\end{remark}

\begin{remark}
\label{remark-assumption-g-beta}
    We note that Definition~\ref{def:def1} assumes $\dot\beta_t > 0$ for all $t\in [0,1]$. If $\dot\beta_0 = 0$, all preceding results remain valid, except that the optimal $g^\Fo$ in Theorem \ref{prop-min-KL} now yields $g_{0}^\Fo\neq \sigma_0$, and $\lim_{t\to0^+} t^{-1} (|g^\Fo_t|^2-\sigma^2_t)$ does not exist. Thus, the boundary limit assumptions in Theorem~\ref{thm:tunning_g} fail to hold. In this case, the drift $b^{g^\Fo}_t$ exhibits a singularity as $t\to 0^+$, which requires more careful treatment when defining the solution to the SDE~\eqref{eq:sde-tuning-g-approx}. The solution remains well-defined because the singularity corresponds to a negative restoring force; see Appendix~\ref{appendix-singular-drift-discussion} for technical details.
\end{remark}

\subsection{Variational characterization of F\"ollmer processes}
\label{sec:Follmer-characterization}

We now show that the KL-optimal diffusion coefficient identifies a F\"ollmer process. We begin by recalling the variational definition of a F\"ollmer process associated with a linear reference SDE: given a reference process $(Y_t)_{t\in[0,1]}$ solving
\begin{equation}
    \label{eq:ref:foll-gen}
    {\rm d}Y_t = a_t Y_t \,{\rm d}t + g_t \,{\rm d}W_t, \quad Y_0 = 0,
\end{equation}
the F\"ollmer process associated with $Y$ and the target $\mu_\star$ is the diffusion $X^{\rm F}$ with $X^{\rm F}_0 = 0$ and $X^{\rm F}_1 \sim \mu_\star$ that minimizes the path-space relative entropy $\mathrm{KL}(\bP_{X^{\rm F}} \| \bP_Y)$. This is a special case of the Schr\"odinger bridge problem \cite{schrodinger1932theorie,leonard2014survey,chen2021stochastic} with a degenerate initial marginal, and the solution can be  characterized through the disintegration identity
\begin{equation}
\label{eq:kl:foll:disintegration}
         \mathrm{KL}[\P_{\check X}\Vert \P_Y] = \int_{\R^d} \mathrm{KL}[\P^{\check X_1=x}_{\check X}\Vert \P^{Y_1=x}_Y] \mu_\star({\rm d}x) + \mathrm{KL}[\mu_\star \Vert \mathrm{Law}(Y_1)],
\end{equation}
valid for any process $\check X$ with $\check X_0 = 0$ and $\check X_1 \sim \mu_\star$. The minimum is achieved by the process whose bridges coincide with those of $Y$; see \cite{follmer1986time,leonard2014survey} and Appendix~\ref{appendix-Schrodinger's bridges} for further discussion.

The following theorem shows that the KL-optimal generative diffusion from Theorem~\ref{prop-min-KL} is the F\"ollmer process for a specific reference process determined by the interpolation schedules.

\begin{theorem}
    \label{th:foll:app}
    Assume that $\beta_t /(\sqrt{t}\,\sigma_t)$ is non-decreasing on $[0,1]$.
    Then the process $X^{\rm F} \equiv X^{g^{\rm F}}$ solving~\eqref{eq:sde-appenfix} with $g_t = g^{\rm F}_t$ is the F\"ollmer process associated with the reference process $Y$ solving~\eqref{eq:ref:foll-gen} with $g_t = g^{\rm F}_t$ and 
\begin{equation}
\label{eqn-a_t}
    a_t = \frac{\rm d}{{\rm d}t} \log \frac{\beta_t^2 + t\sigma_t^2}{\beta_t}.
\end{equation}
That is, among all diffusions $\check X$ with $\check X_0 = 0$ and $\check X_1 \sim \mu_\star$, the process $X^{\rm F}$ uniquely minimizes $\mathrm{KL}(\bP_{\check X} \| \bP_Y)$.
\end{theorem}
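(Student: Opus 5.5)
The strategy is the one sketched after Theorem~\ref{th:foll:app_informal}: time-reverse $X^{\rm F}$, show that at $g=g^{\rm F}$ the reversed drift is linear in $x$, and use this to identify the bridges of $X^{\rm F}$ with those of $Y$. The disintegration identity \eqref{eq:kl:foll:disintegration} then finishes the argument. For a diffusion with the marginals $\rho_t$ of Theorem~\ref{thm:tunning_g}, the time reversal $\bar X_s = X^{\rm F}_{1-s}$ has drift
\[
-b^{g}_{t}(x) + g_t^2\nabla\log\rho_t(x), \qquad t = 1-s.
\]
Substituting \eqref{eq:drift:bc-appendix:a} and \eqref{eq:score:app}, this equals
\[
-b_t(x) + \tfrac12 (g_t^2+\sigma_t^2) A_t\bigl(\beta_t b_t(x) - \dot\beta_t x\bigr).
\]
The coefficient of $b_t$ is $-1 + \tfrac12 (g_t^2+\sigma_t^2)\beta_t A_t = \alpha_t (g_t^2 + \sigma_t^2) - 1 = \alpha_t g_t^2 - \gamma_t$, with $\alpha_t,\gamma_t$ as in \eqref{eq:min:int:a}.

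\textbf{Step 1: the optimal $g$ kills the $b_t$ term.} The hypothesis that $\beta_t/(\sqrt t\sigma_t)$ is non-decreasing means, via \eqref{eq:Ds:2:app}, that $\gamma_t \ge 0$, so $g^{\rm F}$ has no absolute value issue: $(g^{\rm F}_t)^2 = \gamma_t/\alpha_t$. Then $\alpha_t g_t^2 - \gamma_t = 0$, so the reversed drift is exactly the linear field
\[
-\tfrac12 (g_t^2+\sigma_t^2)A_t\dot\beta_t\, x =: -c_t x,
\]
independent of $\mu_\star$. Thus the reversed process solves a linear SDE $\rd\bar X_s = -c_{1-s}\bar X_s\,\rd s + g_{1-s}\,\rd \bar W_s$ with $\bar X_0\sim\mu_\star$. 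Its law is the mixture over $x\sim\mu_\star$ of the Gaussian processes $\bar X^{x}$ started at $x$.

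\textbf{Step 2: build $Y$ and match bridges.} Launch the same reversed linear SDE from a law $\nu$ whose forward reversal ends at $0$; the natural candidate is a centered Gaussian $\nu = \mathsf N(0, v_1 \mathrm I)$, which is what one gets by taking $\mu_\star=\delta_0$ in the interpolant, since then $\rho_t$ is Gaussian with variance $v_t = t\sigma_t^2 + \beta_t^2\cdot 0$ adjusted suitably. I would instead take $\mu_\star$ to be Gaussian $\mathsf N(0,\lambda \mathrm I)$ with $\lambda$ chosen so that the forward Gaussian process reversed is a linear diffusion started at $0$. Because the reversed generator does not depend on $\mu_\star$, this process shares its conditional law given the time-$0$ reversed value, i.e.\ given the forward terminal value, with $X^{\rm F}$. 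So $\bP^{X_1=x}_{X^{\rm F}} = \bP^{Y_1=x}_Y$ for $\mu_\star$-a.e.\ $x$, and $Y$ is a centered Gaussian Markov process from $0$, hence of the form \eqref{eq:ref:foll-gen}. To identify $a_t$, I would compute its variance $V_t$ through $\rho_t$: in the Gaussian case $V_t = \beta_t^2\lambda + t\sigma_t^2$. I then require the forward Ornstein--Uhlenbeck variance equation $\dot V_t = 2a_tV_t + g_t^2$ with $g=g^{\rm F}$, and also that the reversed drift be $-c_t$, i.e.\ $c_t = a_t - g_t^2/V_t$. Solving these should force $\lambda=1$, giving $V_t=\beta_t^2+t\sigma_t^2$ and $a_t = \frac{\rd}{\rd t}\log\bigl((\beta_t^2+t\sigma_t^2)/\beta_t\bigr)$ as in \eqref{eqn-a_t}. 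Checking this identity is routine algebra using $A_t$ and \eqref{eq:Ds:2:app}.

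\textbf{Step 3: conclude and address the main obstacle.} By \eqref{eq:kl:foll:disintegration}, every admissible $\check X$ satisfies $\mathrm{KL}(\bP_{\check X}\|\bP_Y) \ge \mathrm{KL}(\mu_\star\|\mathrm{Law}(Y_1))$. Equality holds iff the bridges agree $\mu_\star$-a.e., which Step~2 shows for $X^{\rm F}$; uniqueness follows since a path measure is determined by its terminal marginal and bridges. The main obstacle is rigor of the time reversal (Haussmann--Pardoux conditions) near $t=0$ and $t=1$, where $g^{\rm F}_1=0$ and $\rho_0$ is a Dirac mass. I would handle this by working on $[\delta,1-\delta]$, using the Lipschitz bound of Theorem~\ref{thm:1:b} and Gaussian tails to justify reversal, matching bridges there, and passing to the limit via continuity of paths and monotone convergence of the KL terms.
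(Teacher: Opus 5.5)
Your route is the paper's. You reverse time and use the monotonicity hypothesis to get $\gamma_t\ge 0$, so that at $g=g^{\rm F}$ the $\mu_\star$-dependent part of the reversed drift cancels. You then restart the resulting linear reversed SDE from a Gaussian to produce $Y$, and conclude with \eqref{eq:kl:foll:disintegration}. Writing the cancellation through $b_t$ rather than the score, and identifying $a_t$ through a variance ODE rather than by reversing $Y^{\rm R}$ directly, are cosmetic differences. Your explicit uniqueness remark (terminal law plus bridges determine the path measure) is a useful addition that the paper leaves implicit. Step~2, however, does not go through as written, for two reasons.

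\textbf{Sign error.} If $\rd Y_t = a_tY_t\,\rd t + g_t\,\rd W_t$ with $\mathrm{Law}(Y_t)=\sfN(0,V_t\mathrm I)$, its reversed drift is $-a_ty + g_t^2\nabla\log\rho^Y_t(y) = -(a_t+g_t^2/V_t)\,y$. The matching condition is therefore $c_t = a_t + g_t^2/V_t$. With your $c_t = a_t - g_t^2/V_t$, combining with $\dot V_t = 2a_tV_t+g_t^2$ gives $-(g^{\rm F}_t)^2 = 3(g^{\rm F}_t)^2$ for $V_t=\lambda\beta_t^2+t\sigma_t^2$. So the system is inconsistent for every $\lambda$.

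\textbf{$\lambda=1$ is not forced.} With the correct sign, the system does \emph{not} force $\lambda=1$. At $g=g^{\rm F}$ one has $g_t^2+\sigma_t^2 = 1/\alpha_t$, so $c_t=\dot\beta_t/\beta_t$. The two conditions then reduce to $\frac{\rd}{\rd t}(V_t/\beta_t^2) = -(g^{\rm F}_t)^2/\beta_t^2$. By the identity $(g^{\rm F}_t)^2/\beta_t^2 = -\frac{\rd}{\rd t}(t\sigma_t^2/\beta_t^2)$ used in the paper, this holds for every $\lambda>0$. Nor is any tuning of $\lambda$ needed to get $Y_0=0$: since $\beta_0=0$, the reversed linear SDE collapses to $0$ at forward time $0$ from any initial law.

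Consequently every $\lambda$ gives a legitimate linear reference sharing its bridges with $X^{\rm F}$, namely $a^\lambda_t = \dot\beta_t/\beta_t - (g^{\rm F}_t)^2/(\lambda\beta_t^2+t\sigma_t^2)$. For example, for $\beta_t=t$, $\sigma_t=\sqrt{1-t}$ this is $a^\lambda_t=(\lambda-1)/(1+(\lambda-1)t)$, the Gaussian $h$-transforms of Brownian motion. The $a_t$ in the statement corresponds to $\lambda=1$, and you must impose this choice rather than derive it, exactly as the paper does by starting $Y^{\rm R}$ from $\sfN(0,\mathrm I)$. After that, checking $a^1_t=\frac{\rd}{\rd t}\log\frac{\beta_t^2+t\sigma_t^2}{\beta_t}$ is routine. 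With these two corrections your argument is complete.
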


The monotonicity assumption on $\beta_t /(\sqrt{t}\,\sigma_t)$ ensures that the KL-optimal diffusion coefficient identified in Theorem~\ref{prop-min-KL} eliminates the score term under time reversal, allowing the optimal process to be identified as a F\"ollmer process.

The reference process $Y$ depends on the interpolation schedules $(\beta_t, \sigma_t)$ but not on $\mu_\star$, which enters only through the terminal constraint. Theorem~\ref{th:foll:app} thus provides a new variational characterization of F\"ollmer processes: rather than fixing a reference process \textit{a priori} (as in the classical formulation \cite{follmer1986time,lehec2013representation}) and deriving optimal controls, we start from a parametric family of generative diffusions sharing the same marginal laws and optimize the diffusion coefficient. The F\"ollmer process and its reference emerge simultaneously from this optimization.

\begin{proof}[Proof of Theorem \ref{th:foll:app}]
We follow F\"ollmer's original strategy of time reversal \cite{follmer1986time}; see also \cite{haussmann1986time,cattiaux2023time}.

\medskip
\noindent\textit{Step 1: Forward SDE in terms of the score.}
Solving~\eqref{eq:score:app} for $b_t$ and substituting into~\eqref{eq:drift:bc-appendix:a}, the SDE~\eqref{eq:sde-appenfix} takes the form
\begin{equation}
    \label{eq:sde-appenfix:2}
    {\rm d}X^g_t= \left(\beta_t^{-1} A^{-1}_t +\tfrac12 (g_t^2-\sigma_t^2)\right)  \nabla \log \rho_t(X^g_t) \,{\rm d}t + \beta_t^{-1} \dot\beta_t X_t^g  \,{\rm d}t  + g_t \, {\rm d}W_t,
\end{equation}
with $X^g_0 = 0$.

\medskip
\noindent\textit{Step 2: Time reversal and decoupling from the score.}
Time-reversing~\eqref{eq:sde-appenfix:2} yields the SDE for $X_{t}^\rev \overset{d}{=} X^g_{1-t}$:
\begin{equation}
    \label{eq:sde-appenfix:3}
    \begin{aligned}
        {\rm d}X^\rev_t& = -\left(\beta_{1-t}^{-1} A^{-1}_{1-t} -\tfrac12 (g_{1-t}^2+\sigma_{1-t}^2)\right)  \nabla \log \rho_{1-t}(X^\rev_t) \,{\rm d}t \\
        & \quad - \beta_{1-t}^{-1} \dot\beta_{1-t} X^\rev_t \,{\rm d}t  + g_{1-t}  \,{\rm d}W_t.
    \end{aligned}
\end{equation}
We note that
\begin{equation*}
g^\Fo_t = \left|2t\sigma^2_t \frac{\rm d}{{\rm d}t}\log\frac{\beta_t }{ \sqrt{t}\,\sigma_t }\right|^{1/2} = \left|\frac{1-\frac12 \beta_t  A_t\sigma^2_t}{\frac12 \beta_t  A_t}\right|^{1/2}\, .
\end{equation*}
Setting $g_t=g^{\rm F}_t$ with $1-\frac12 \beta_t  A_t\sigma^2_t\ge 0$ (guaranteed by $\beta_t /(\sqrt{t}\,\sigma_t)$ non-decreasing), the score term vanishes and~\eqref{eq:sde-appenfix:3} reduces to
\begin{equation}
    \label{eq:sde-appenfix-}
    {\rm d}X^\rev_t= -\beta_{1-t}^{-1} \dot\beta_{1-t} X^\rev_t \,{\rm d}t  + g^{\rm F}_{1-t}  \,{\rm d}W_t, \quad X^\rev_0  \sim \mu_\star.
\end{equation}
The drift is now independent of $\nabla \log \rho_t$: the target $\mu_\star$ enters only through the initial condition.

\medskip
\noindent\textit{Step 3: Construction of the reference process.}
Replacing the initial condition in~\eqref{eq:sde-appenfix-} by $Y^\rev_0  \sim \sfN(0,\mathrm{I})$ gives
\begin{equation}
    \label{eq:sde-appenfix:5}
    {\rm d}Y^\rev_t= -\beta_{1-t}^{-1}\dot\beta_{1-t} Y^\rev_t \,{\rm d}t  + g^{\rm F}_{1-t}  \,{\rm d}W_t, \quad Y^\rev_0 \sim \sfN(0,\mathrm{I}).
\end{equation} 
Since ${\rm d}\beta_{1-t} = - \dot\beta_{1-t} \,{\rm d}t$, we can write ${\rm d}(\beta^{-1}_{1-t} Y^\rev_t) = \beta_{1-t}^{-1} g^{\rm F}_{1-t}  \,{\rm d}W_t$, yielding the explicit solution
\begin{equation}
    \label{eq:sde-appenfix:7}
    Y^\rev_t = \beta_{1-t} Y^\rev_0 + \beta_{1-t} \int_0^t \beta^{-1}_{1-u}g^{\rm F}_{1-u}  \,{\rm d}W_u.
\end{equation} 
A direct calculation using the explicit form of $|g^{\rm F}_t|^2 = 2t\sigma^2_t \frac{\rm d}{{\rm d}t}\log\frac{\beta_t }{ \sqrt{t}\,\sigma_t }$ shows that
\begin{equation}
    \label{eq:sde-appenfix:8}
        \E \left[\left|\beta_{1-t} \int_0^t \beta^{-1}_{1-u}g^{\rm F}_{1-u}  \,{\rm d}W_u\right|^2\right] = d\beta^2_{1-t} \int_0^t \beta^{-2}_{1-u}|g^{\rm F}_{1-u}|^2 \,{\rm d}u = d(1-t)\sigma^2_{1-t},
\end{equation}
where the last equality follows from the change of variables $u \mapsto 1-u$ and the identity $\int_{1-t}^1 \frac{2u\sigma^2_{u}}{\beta^2_{u}} \frac{\rm d}{{\rm d}u} 
    \log \frac{\beta_{u}}{\sqrt{u} \sigma_{u}} \,{\rm d}u = -\int_{1-t}^1 {\rm d}\left(\frac{u\sigma^2_{u}}{\beta^2_{u}}\right)$, together with $\sigma_1 = 0$. Therefore,
\begin{equation}
\label{eq:Y:ref}
Y^\rev_{1-t}\stackrel{d}{=}  \beta_t z + \sigma_t W_t \sim \sfN(0, (\beta_t^2+ t\sigma_t^2)\mathrm{I}),
\end{equation}
where $z\sim \sfN(0,\mathrm{I})$ with $z\perp W$. This is the stochastic interpolant~\eqref{eq:stochinterpolant} with $x_\star$ replaced by a standard Gaussian.

\medskip
\noindent\textit{Step 4: Identification of the reference SDE.}
From~\eqref{eq:Y:ref}, the score of $Y_t \stackrel{d}{=} Y^\rev_{1-t}$ is $\nabla \log \rho_t^{Y}(y) = -y/(\beta_t^2+ t\sigma_t^2)$.
Time-reversing~\eqref{eq:sde-appenfix:5},  we get \begin{equation}
    \label{eq:Y:ref:sde}
    {\rm d}Y_t = \beta_{t}^{-1}\dot\beta_t Y_t {\rm d}t + |g_t^\Fo|^2 \nabla \log \rho_t^{Y}(Y_t) {\rm d}t+ g^\Fo_t  {\rm d}W_t, \quad Y_{0}  = 0,
\end{equation}
and by inserting the explicit forms of $g^{\rm F}_t$ and $\nabla \log \rho_t^Y$, we finally get
\begin{equation}
    {\rm d}Y_t = a_t Y_t\,{\rm d}t + g^{\rm F}_t \,{\rm d}W_t, \quad Y_0 = 0,
\end{equation}
with $a_t = \frac{\rm d}{{\rm d}t} \log \frac{\beta_t^2 + t\sigma_t^2}{\beta_t}$, establishing~\eqref{eqn-a_t}.

\medskip
\noindent\textit{Step 5: KL optimality via disintegration.}
Since KL divergence is invariant under time reversal, $\mathrm{KL}(\bP_{\check X} \| \bP_Y) = \mathrm{KL}(\bP_{\check X^\rev} \| \bP_{Y^\rev})$. Applying the disintegration~\eqref{eq:kl:foll:disintegration} to the reversed processes gives
\begin{equation}
\label{eq:kl:foll:gen:interpolants}
         \mathrm{KL} (\mathbb{P}_{\check X^{\rev}}\| \mathbb{P}_{Y^{\rev}}) =  \int_{\R^d} \mathrm{KL} (\mathbb{P}_{\check X^{\rev}}^{\check X^{\rev}_1=x}\| \mathbb{P}_{Y^{\rev}}^{Y^{\rev}_1 = x}) \mu_\star({\rm d}x) + \mathrm{KL}[\mu_\star \Vert \sfN(0,\mathrm{I})].
\end{equation}
The second term is fixed by the constraint $\check X_1 \sim \mu_\star$. The first term is non-negative and vanishes for $\check X^\rev = X^\rev$ (the time-reversal of $X^{g^{\rm F}}$), since by~\eqref{eq:sde-appenfix-} and~\eqref{eq:sde-appenfix:5} the processes $X^\rev$ and $Y^\rev$ share the same dynamics---they differ only in their initial conditions. Therefore $X^{g^{\rm F}}$ minimizes $\mathrm{KL}(\bP_{\check X} \| \bP_Y)$ over all $\check X$ with $\check X_0 = 0$ and $\check X_1 \sim \mu_\star$.
\end{proof}

\subsection{Explicit formulas for specific schedules}
\label{sec:explicit-schedules}

We specialize the general formulas to two choices of interpolation schedules.

\subsubsection{Linear-linear schedule: $\beta_t = t$, $\sigma_t = 1-t$}

With $\dot\beta_t = 1$ and $\dot\sigma_t = -1$, the key quantities are:
\begin{align*}
    \dot\beta_t \sigma_t - \beta_t \dot\sigma_t = 1, \quad
    A_t = \frac{1}{t(1-t)}.
\end{align*}
The KL-optimal diffusion coefficient is
\begin{equation}
    g_t^{\rm F} = \sqrt{1-t^2},
\end{equation}
and the F\"ollmer drift takes the simple form
\begin{equation}
\label{eq:follmer-drift-linear}
    b^{g^{\rm F}}_t(x) = (1+t)\, b_t(x) - x.
\end{equation}
Since $b_t(x) = \bE[x_\star - \sqrt{t}\, z \mid x_t = x]$, the F\"ollmer drift admits the conditional expectation representation
\begin{equation}
\label{eq:follmer-drift-linear-cond}
    b^{g^{\rm F}}_t(x) = \bE[x_\star - 2\sqrt{t}\, z \mid x_t = x].
\end{equation}
Both are well defined and non-singular.

\subsubsection{Linear-square root schedule: $\beta_t = t$, $\sigma_t = \sqrt{1-t}$}

This schedule serves as a sanity check: it is the unique schedule for which the reference process $Y$ in Theorem~\ref{th:foll:app} is standard Brownian motion, recovering the classical F\"ollmer construction. Note that $\sigma_t = \sqrt{1-t}$ is not differentiable at $t = 1$, violating our standing assumption $\sigma \in C^1([0,1])$. Nevertheless, the formulas remain valid on $[0,1)$ and extend continuously to $t = 1$.

With $\dot\beta_t = 1$ and $\dot\sigma_t = -\frac{1}{2\sqrt{1-t}}$ for $t \in [0,1)$, we have
\begin{align*}
    \dot\beta_t \sigma_t - \beta_t \dot\sigma_t = \frac{2-t}{2\sqrt{1-t}}, \quad A_t = \frac{2}{t(2-t)}.
\end{align*}
The KL-optimal diffusion coefficient is $g_t^{\rm F} = 1$,
recovering the classical F\"ollmer process with Brownian diffusion as reference dynamics. The F\"ollmer drift satisfies
\begin{equation}
\label{eq:follmer-drift-sqrt}
    b^{g^{\rm F}}_t(x) = \frac{2\, b_t(x) - x}{2-t}.
\end{equation}

We have the $b_t(x) = \bE[x_\star - \frac{\sqrt{t}}{2\sqrt{1-t}}\, z \mid x_t = x]$ and the F\"ollmer drift admits the conditional expectation representation
\begin{equation}
\label{eq:drift:f}
    b^{g^{\rm F}}_t(x) = \bE\Big[x_\star - \frac{\sqrt{t}}{\sqrt{1-t}}\, z \;\Big|\; x_t = x\Big].
\end{equation}
It is well-defined and bounded as $t\to 0^+$.

The coefficient $\sqrt{t/(1-t)}$ blows up as $t \to 1^-$, but the drift remains bounded. To see this, using $x_t = t x_\star + \sqrt{t(1-t)}\, z$, we can write
\[
\bE[x_\star \mid x_t = x] = \frac{x - \sqrt{t(1-t)}\,\bE[z \mid x_t = x]}{t},
\]
so that
\[
b^{g^{\rm F}}_t(x) = \frac{x}{t} - \left(\frac{\sqrt{1-t}}{\sqrt{t}} + \frac{\sqrt{t}}{\sqrt{1-t}}\right)\bE[z \mid x_t = x] = \frac{x}{t} - \frac{1}{\sqrt{t(1-t)}}\,\bE[z \mid x_t = x].
\]
Using the fact $\nabla \log \rho_t(x) = -\bE[z \mid x_t = x]/\sqrt{t(1-t)}$, we have
\begin{equation}
\label{eq:drift:f:simplified}
    b^{g^{\rm F}}_t(x) = \frac{x}{t} + \nabla \log \rho_t(x).
\end{equation}
Under Assumption~\ref{assum:bounded-support-smoothed-assumption}, the score $\nabla \log \rho_t(x)$ remains non-singular as $t \to 1^-$, and we obtain
\begin{equation}
    \lim_{t \to 1^-} b^{g^{\rm F}}_t(x) = x + \nabla \log \rho^\star(x),
\end{equation}
where $\rho^\star$ is the density of $\mu_\star$. The F\"ollmer drift is non-singular on $[0,1]$.

\subsection{F\"ollmer drifts as conditional expectations}
\label{sec:Follmer-drifts-conditional}
The baseline drift~\eqref{eq:b:def:app} is a conditional expectation, which lets it be regressed from samples via the simulation-free loss~\eqref{eqn-loss-for-interpolants}. The two examples above show that the F\"ollmer drift admits an analogous representation. We now show that this is general: the F\"ollmer drift associated with any reference process of the form~\eqref{eq:ref:foll-gen} can be written as a conditional expectation, so that it too can be estimated by square-loss regression without simulating any SDE.

Consider the reference process~\eqref{eq:ref:foll-gen} and define
\begin{equation}
\label{eq:rth-def}
    r_t = \exp\left(\int_0^t a_u \,{\rm d}u\right), \quad h_t = \frac{1}{\eps}\int_0^t \frac{r_t^2}{r_u^2} g_u^2 \,{\rm d}u, \quad \eps = \int_0^1 \frac{r_1^2}{r_u^2} g_u^2 \,{\rm d}u > 0.
\end{equation}
We have the following result. Here we use $X^{\rm F}$ to denote a F\"ollmer process.
\begin{proposition}
\label{prop-Follmer-SDE}
    The F\"ollmer process $X^{\rm F}$ associated with the reference process $Y$ solving \eqref{eq:ref:foll-gen} and the target measure $\mu_\star$ satisfies the SDE
    \begin{equation}
    \label{eq:Follmer-SDE-cond-exp}
            {\rm d}X_t^{\rm F} = a_t X^{\rm F}_t \,{\rm d}t + g_t^2 \nabla \log P^Y_{1-t}f(X_t^{\rm F}) \,{\rm d}t +  g_t\,{\rm d}W_t, \quad X_0^{\rm F} = 0,
    \end{equation}
    where 
    $P_t^Y f(x) = \mathbb{E}[f(Y_1)|Y_{1-t}=x]$ and $f = {\rm d}\mu_\star/{\rm d}\mathrm{Law}(Y_1)$. Moreover, $X_t^{\rm F}$ has the same law at each $t$ as the stochastic interpolant
    \begin{equation}
    \label{eq:Follmer-interpolant}
    x_t = h_t x_\star + \sqrt{\eps h_t(1-h_t)}\,z, \quad x_\star \sim \mu_\star, \quad z \sim \sfN(0,\mathrm{I}), \quad z \perp x_\star,
    \end{equation}
    and the above F\"ollmer drift admits the conditional-expectation representation
    \begin{equation}
    \label{eq:Follmer-drift-cond}
    b^{\rm F}_t(x) = a_t x + g_t^2\,\E\!\left[\frac{1}{\eps}x_\star-\sqrt{\frac{h_t}{\eps(1-h_t)}}\,z \;\Big|\; x_t = x\right].
    \end{equation}
\end{proposition}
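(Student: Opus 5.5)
The plan has three parts: identify the Föllmer process as an h-transform of $Y$, identify its marginals with the interpolant \eqref{eq:Follmer-interpolant}, and then rewrite the score in the drift through Tweedie's formula.

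\emph{Step 1: the Föllmer SDE \eqref{eq:Follmer-SDE-cond-exp}.} By the disintegration \eqref{eq:kl:foll:disintegration}, the Föllmer process is the law $\bP_{X^{\rm F}} = f(Y_1)\,\bP_Y$. Its bridges coincide with those of $Y$, and its terminal law is $\mu_\star$. This measure is a Doob $h$-transform of $Y$ with the space-time harmonic function $\varphi_t(x) = \bE[f(Y_1)\mid Y_t = x] = P^Y_{1-t}f(x)$. The martingale $\varphi_t(Y_t)$ satisfies $\rd\varphi_t(Y_t) = g_t\nabla\varphi_t(Y_t)\cdot\rd W_t$. Girsanov's theorem then gives the drift $a_t x + g_t^2\nabla\log\varphi_t(x)$, which is \eqref{eq:Follmer-SDE-cond-exp}.

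\emph{Step 2: marginals.} Since $Y$ is linear Gaussian, I solve it explicitly: $Y_t = r_t\int_0^t r_u^{-1}g_u\,\rd W_u$. Hence $\mathrm{Cov}(Y_t) = \eps h_t r_1^2 r_t^{-2}\cdot(r_t^2/r_1^2)\,\mathrm{I}$ in the normalization of \eqref{eq:rth-def}. Let $v_t = \int_0^t r_t^2 r_u^{-2} g_u^2\,\rd u = \eps h_t$. Then $\mathrm{Cov}(Y_t,Y_1) = (r_1/r_t)v_t$.

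The law of $X^{\rm F}_t$ mixes the bridge marginals of $Y$ given $Y_1 = x_\star$ over $x_\star\sim\mu_\star$. Gaussian conditioning gives
\[
\bE[Y_t\mid Y_1=y] = \frac{r_1 v_t}{r_t v_1}\,y,
\qquad
\mathrm{Var}[Y_t\mid Y_1] = v_t - \frac{r_1^2 v_t^2}{r_t^2 v_1},
\]
where $v_1 = \eps$. I then need to check that these match $h_t x_\star$ and $\eps h_t(1-h_t)$. The mean coefficient is $r_1 v_t/(r_t\eps)$, while $h_t = v_t/\eps$. These agree only if $r_t = r_1$, which is false in general. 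So the definitions must be arranged differently: I expect $h_t$ to be meant as $\frac{1}{\eps}\int_0^t \frac{r_1 r_t}{r_u^2}g_u^2\,\rd u$, up to rescaling, or else a factor $r_t/r_1$ is absorbed.

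The robust route is to compute the bridge mean and variance in general, as $m_t x_\star$ and $s_t^2$. I then verify the identity $s_t^2 = \eps h_t(1-h_t)$ with the paper's normalization, adjusting the bookkeeping of $r_t$ as needed. This reconciliation is the main obstacle. It is pure algebra with the Gaussian covariance kernel $K(t,s) = r_t r_s\int_0^{t\wedge s} r_u^{-2}g_u^2\,\rd u$.

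\emph{Step 3: conditional-expectation form \eqref{eq:Follmer-drift-cond}.} The density of $X^{\rm F}_t$ is $\rho_t = \varphi_t\,\rho^Y_t$. Therefore
\[
\nabla\log\varphi_t = \nabla\log\rho_t - \nabla\log\rho^Y_t = \nabla\log\rho_t + \frac{x}{\mathrm{Var}(Y_t)}.
\]
For the interpolant $x_t = m_t x_\star + s_t z$, Tweedie's formula \eqref{eq:stein} gives $\nabla\log\rho_t(x) = -s_t^{-1}\bE[z\mid x_t=x]$. I write $x = \bE[x_t\mid x_t=x]$ and substitute it into the term $x/\mathrm{Var}(Y_t)$. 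Using $\mathrm{Var}(Y_t) = m_t\cdot(\text{const})$-type relations, namely $\mathrm{Var}(Y_t) = m_t^2\eps + s_t^2$, which holds because with $x_\star$ replaced by $Y_1$ the interpolant is exactly $Y_t$, I collect the coefficients of $\bE[x_\star\mid x_t]$ and $\bE[z\mid x_t]$.

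With $m_t = h_t$ and $s_t^2 = \eps h_t(1-h_t)$, we get $\mathrm{Var}(Y_t) = \eps h_t$. This yields
\[
\nabla\log\varphi_t = \bE\!\left[\frac{x_\star}{\eps} + \Bigl(\frac{s_t}{\eps h_t} - \frac1{s_t}\Bigr) z \,\Big|\, x_t\right],
\]
and the $z$-coefficient simplifies to $-\sqrt{h_t/(\eps(1-h_t))}$. This is exactly \eqref{eq:Follmer-drift-cond}, so the final identity confirms the target normalization used in Step 2.
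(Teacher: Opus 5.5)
Your Step~1 is correct and more direct than the paper's argument. Writing $\bP_{X^{\rm F}}=f(Y_1)\,\bP_Y$ as a Doob $h$-transform gives the drift $a_tx+g_t^2\nabla\log P^Y_{1-t}f$ and the factorization $\rho_t=\varphi_t\,\rho^Y_t$ at once. The paper instead time-reverses $Y$, swaps the initial law for $\mu_\star$, reverses again, and identifies $\log(\rho_t/\rho^Y_t)$ with $\log P^Y_{1-t}f$ through an HJB equation.

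\textbf{The gap is Step~2, and it is not a bookkeeping issue that can be ``adjusted''.} Your Gaussian-conditioning formulas are right. They show that the bridge marginal is $m_tx_\star+s_tz$ with
\[
m_t=(r_1/r_t)h_t,\qquad s_t^2=\eps h_t\bigl(1-(r_1/r_t)^2h_t\bigr).
\]
Comparing means already decides the matter: $\bE[X^{\rm F}_t]=m_t\bE[x_\star]$, while the stated interpolant has mean $h_t\bE[x_\star]$. These agree only if $r_t=r_1$ for all $t$, i.e.\ $a\equiv0$. Redefining $h_t$ cannot help either: your candidate fixes the mean but leaves the variance at $\eps m_t(r_t/r_1-m_t)$, not $\eps m_t(1-m_t)$.

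The failure is concrete. For the reference of Theorem~\ref{th:foll:app} with $\beta_t=t$, $\sigma_t=1-t$, one has $\eps=1$ and $h_t=t(1-t+t^2)$. That theorem, however, gives $X^{\rm F}_t\stackrel{d}{=}I_t$, whose mean is $t\,\bE[x_\star]$.

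The paper's proof passes over exactly this point. It asserts $\mathrm{Cov}(Y_t,Y_u)=\eps h_{\min(t,u)}\mathrm{I}$, which holds only for constant $r$; in general the covariance is $(r_{t\vee u}/r_{t\wedge u})\,\eps h_{t\wedge u}\mathrm{I}$. Its Step~2 identity $Y_t\stackrel{d}{=}h_tY_1+\sqrt{\eps h_t(1-h_t)}\,z$ can hold as a one-time marginal statement. Replacing $Y_1$ by $x_\star$, however, requires the identity to hold jointly with $Y_1$, and it does not. So the second and third claims, as written, are valid only when $a\equiv0$, as in the Brownian example with $\sigma_t=\sqrt{1-t}$.

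The closing sentence of your Step~3 is circular. Inserting the assumed $m_t=h_t$ into the Tweedie computation derives the drift from the marginal law; it cannot confirm that law. The remedy is to keep $m_t,s_t$ general. Your own argument, using $\nabla\log\rho^Y_t(x)=-x/(\eps h_t)$ and $\eps h_t=\eps m_t^2+s_t^2$, then yields
\[
b^{\rm F}_t(x)=a_tx+g_t^2\,\bE\Bigl[\tfrac{r_1}{\eps r_t}\,x_\star-\tfrac{r_1 m_t}{r_t s_t}\,z\;\Big|\;m_tx_\star+s_tz=x\Bigr].
\]
This reduces to the stated formula exactly when $a\equiv0$. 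For the reference of Theorem~\ref{th:foll:app} with $\beta_t=t$, $\sigma_t=1-t$, it reproduces the paper's explicit F\"ollmer drift $\bE[x_\star-2\sqrt{t}\,z\mid x_t=x]$, which the stated formula does not. To complete a proof you must either restrict the proposition to $a\equiv0$ or prove this corrected version.
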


The proof is given in Appendix~\ref{appendix-sec:Follmer drifts and conditional expectations}. By Proposition~\ref{prop-Follmer-SDE}, the F\"ollmer drift can be estimated directly from data via square-loss regression. Analogously to~\eqref{eqn-loss-for-interpolants}, one minimizes
\begin{equation}
\label{eqn-loss-for-follmer}
    L_u[\hat u] = \int_0^1 \E \left[\left|\hat u_t(x_t) - \left( \frac{g_t^2}{\eps}x_\star-g_t^2\sqrt{\frac{h_t}{\eps(1-h_t)}}\,z\right)\right|^2\right]  {\rm d}t,
\end{equation}
and the estimated F\"ollmer process is ${\rm d}\hat{X}^{\rm F}_t = a_t \hat{X}^{\rm F}_t \,{\rm d}t + \hat{u}_t(\hat{X}^{\rm F}_t)\,{\rm d}t +  g_t\,{\rm d}W_t$, $\hat{X}^{\rm F}_0 = 0$.

Unlike the baseline loss~\eqref{eqn-loss-for-interpolants}, the regression target in~\eqref{eqn-loss-for-follmer} may have unbounded variance as $t \to 1$, since the factor $g_t^2/\sqrt{1-h_t}$ diverges unless $g_t/\sqrt{1-t}$ remains bounded. This is a manifestation of the terminal singularity common to all F\"ollmer-type constructions.

% ============================================================
\section{Schedule Invariance under Path-Space KL Divergence}
\label{sec:invariance}
% ============================================================

Since all schedules $(\beta_t, \sigma_t)$ lead to generative diffusions involving the score of $\mu_\star$ at varying noise levels, a natural question is whether certain schedules yield better statistical efficiency. The following result shows that, after KL-optimal tuning of the diffusion coefficient, the answer is no: the minimized path-space KL divergence depends only on the quality of the score estimates, not on the schedule.

\begin{theorem}
\label{thm:schedule-invariance}
    After optimizing the diffusion coefficient as in Theorem~\ref{prop-min-KL}, the minimized path-space KL divergence is
    \begin{equation}
    \label{eqn-KL-invariant}
    \mathrm{KL}^\star = 2 \int_0^\infty r \, \bE\!\left[|\nabla \log q_{r} (x_\star + rz) - \hat{s}_{r}(x_\star+rz)|^2\right] {\rm d}r,
    \end{equation}
    where $q_r = \mu_\star * \sfN(0, r^2\mathrm{I})$ is the density of $x_\star + rz$, and $\hat{s}_r$ is the estimated score at noise level~$r$ defined through the estimated drift $\hat{b}_t$ via~\eqref{eqn-b-t-as-scores-approx} below. In particular, assuming the same accuracy of the estimated score $\hat s_r$, $\mathrm{KL}^\star$ is independent of $(\beta_t, \sigma_t)$.
\end{theorem}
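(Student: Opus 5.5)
Start from Remark~\ref{rem:full-interval-KL}: at $g=g^{\rm F}$ the integrand of \eqref{eq:kl:a2} equals $4\alpha_t\gamma_t L_t$ when $\gamma_t>0$ and vanishes otherwise. Here $\alpha_t=\tfrac12\beta_tA_t$, $\gamma_t=1-\alpha_t\sigma_t^2$, and $L_t=\bE[|\hat b_t(I_t)-b_t(I_t)|^2]$. So
\[
\mathrm{KL}^\star=\int_0^1 2\beta_tA_t\,\gamma_t^+\,L_t\,{\rm d}t .
\]
The plan is to express $L_t$ through a score error at the noise level $r_t=\sqrt{t}\,\sigma_t/\beta_t$, and then change variables $t\mapsto r_t$.

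\emph{Step 1 (rescaling).} Write $x_t=\beta_t(x_\star+r_tz)$. Then $\rho_t(x)=\beta_t^{-d}q_{r_t}(x/\beta_t)$, so $\nabla\log\rho_t(x)=\beta_t^{-1}\nabla\log q_{r_t}(x/\beta_t)$. Inverting \eqref{eq:score:app} gives
\[
b_t(x)=\beta_t^{-1}\bigl(A_t^{-1}\nabla\log\rho_t(x)+\dot\beta_t x\bigr).
\]
Define the estimated score $\hat s_{r_t}(y)$ by the same affine relation applied to $\hat b_t$, namely $\beta_t^{-1}\hat s_{r_t}(x/\beta_t)=A_t(\beta_t\hat b_t(x)-\dot\beta_tx)$. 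This is presumably the paper's \eqref{eqn-b-t-as-scores-approx}. Then
\[
\hat b_t-b_t=(\beta_t^2A_t)^{-1}\bigl(\hat s_{r_t}-\nabla\log q_{r_t}\bigr)(x/\beta_t),
\qquad
L_t=\beta_t^{-4}A_t^{-2}\,\bE\bigl[|\nabla\log q_{r_t}-\hat s_{r_t}|^2(x_\star+r_tz)\bigr].
\]
Writing $E(r)$ for this expectation, the integrand becomes $2\gamma_t^+\beta_t^{-3}A_t^{-1}E(r_t)$.

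\emph{Step 2 (the Jacobian identity).} This is the crux. By \eqref{eq:Ds:2:app},
\[
\gamma_t=\alpha_t\,|g^{\rm F}_t|^2=\alpha_t\,2t\sigma_t^2\,\frac{\rm d}{{\rm d}t}\log\frac{\beta_t}{\sqrt t\,\sigma_t}=-\alpha_t\,2t\sigma_t^2\,\frac{\dot r_t}{r_t}.
\]
Hence
\[
2\gamma_t\beta_t^{-3}A_t^{-1}=-\frac{2t\sigma_t^2\,\dot r_t}{\beta_t^2\,r_t}=-2r_t\dot r_t ,
\]
using $r_t^2=t\sigma_t^2/\beta_t^2$. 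The integrand is therefore $-2r_t\dot r_tE(r_t)$ wherever $\gamma_t>0$, i.e.\ wherever $r_t$ is decreasing.

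\emph{Step 3 (change of variables).} Under the monotonicity hypothesis of Theorem~\ref{th:foll:app}, $r_t$ decreases from $+\infty$ at $t=0^+$ (since $\beta_0=0$ and $\sigma_0>0$) to $0$ at $t=1$. Substituting $r=r_t$ gives $\int_0^\infty 2rE(r)\,{\rm d}r$, which is \eqref{eqn-KL-invariant}.

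If monotonicity fails, the set where $\gamma_t\le0$ is exactly where $r_t$ increases, and those pieces contribute zero. The decreasing branches then cover $(0,\infty)$, and I expect this to still reduce to a one-pass integral only under monotonicity. I would therefore state the result under that assumption, or treat the general case as an inequality and note it.

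Since $E(r)$ depends only on $\mu_\star$ and the score estimate at level $r$, schedule independence follows. The main obstacle is the boundary and integrability bookkeeping: justifying the limit $\delta\to0$ via Remark~\ref{rem:full-interval-KL} and the endpoint behaviour of $r_t$. The algebraic identity in Step 2 itself is short.
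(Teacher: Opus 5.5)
Your route is the paper's own: write $b_t-\hat b_t$ as a rescaled score error at level $r_t=\sqrt t\,\sigma_t/\beta_t$, insert $g^{\rm F}$, identify the prefactor with $-r_t\dot r_t$, and substitute $r=r_t$. Your affine definition of $\hat s_{r_t}$ is exactly \eqref{eqn-b-t-as-scores-approx}. Your Step~2, via $\gamma_t=\alpha_t|g^{\rm F}_t|^2$, is the ``direct calculation'' the paper leaves implicit, and it is correct.

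\textbf{The error is in your first display.} The quantity $4\alpha_t\gamma_tL_t$ in Remark~\ref{rem:full-interval-KL} is the integrand $g_t^{-2}|1+\tfrac12\beta_tA_t(g_t^2-\sigma_t^2)|^2L_t$, and in \eqref{eq:kl:a2} it is multiplied by $\tfrac12$. Hence
\[
\mathrm{KL}^\star=\int_0^1 2\alpha_t\gamma_t^+L_t\,\rd t=\int_0^1\beta_tA_t\gamma_t^+L_t\,\rd t ,
\]
not $\int_0^1 2\beta_tA_t\gamma_t^+L_t\,\rd t$. Carrying your Steps 1--3 through with this factor gives the density $(-r_t\dot r_t)^+E(r_t)$. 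The result is $\mathrm{KL}^\star=\int_0^\infty r\,E(r)\,\rd r$, which is half of \eqref{eqn-KL-invariant}.

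You can confirm this in two independent ways:
\begin{enumerate}
\item Where $\gamma_t\ge0$, the score coefficient in \eqref{eq:sde-appenfix:2} at $g=g^{\rm F}$ equals $|g^{\rm F}_t|^2$. So the divergence is $\tfrac12\int_0^1|g^{\rm F}_t|^2\beta_t^{-2}E(r_t)\,\rd t=\tfrac12\int_0^\infty E(r)\,\rd(r^2)$, the usual variance-exploding expression.
\item For $\beta_t=t$, $\sigma_t=1-t$ one has $b^{g^{\rm F}}_t-\hat b^{g^{\rm F}}_t=(1+t)(b_t-\hat b_t)$, $b_t-\hat b_t=\tfrac{1-t}{t}(\nabla\log q_{r_t}-\hat s_{r_t})(x/t)$, and $|g^{\rm F}_t|^2=1-t^2$. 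The Girsanov density is therefore $\tfrac{1-t^2}{2t^2}E(r_t)=-r_t\dot r_t\,E(r_t)$.
\end{enumerate}
The paper's intermediate display carries the same extra factor $2$. Your argument reaches the stated constant only through the dropped $\tfrac12$; what it actually proves is the formula with constant $1$. Schedule invariance, the substantive conclusion, does not depend on the constant and is unaffected.

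Your monotonicity caveat is correct and sharper than the paper, whose proof performs the substitution $t\mapsto r$ under the hypotheses of Theorem~\ref{prop-min-KL} alone. Since $r_{0^+}=\infty$ and $r_1=0$, for a.e.\ level the number of downward crossings by $t\mapsto r_t$ exceeds the number of upward ones by one. So for non-monotone $r_t$, some levels are counted with multiplicity, and $\hat s_r$ as defined through \eqref{eqn-b-t-as-scores-approx} may even differ between crossings. The one-pass formula therefore requires $r_t$ non-increasing, which is exactly the hypothesis of Theorem~\ref{th:foll:app}.
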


\begin{proof}
The proof proceeds in two steps: we first express the drift and its estimator in terms of a single family of scores parametrized by noise level, then evaluate the optimized KL divergence and perform a change of variables.

\medskip
\noindent\textit{Step 1: Reduction to scores.}
From the identities $b_t(x) = \dot{\beta}_t\,\bE[x_\star\mid x_t =x] + \sqrt{t}\, \dot{\sigma}_t\,\bE[z\mid x_t =x]$ and $x = \beta_t\, \bE[x_\star\mid x_t =x] + \sqrt{t}\, \sigma_t \,\bE[z\mid x_t =x]$, we can express the drift as 
\begin{equation}
\label{eqn-b-t-as-scores}
    b_t(x) = \frac{\dot\beta_t}{\beta_t}x + \frac{t\sigma_t^2}{\beta_t}\left(\frac{\dot\beta_t}{\beta_t}-\frac{\dot\sigma_t}{\sigma_t}\right)\nabla \log q_{\frac{\sqrt{t}\,\sigma_t}{\beta_t}}\!\left(\frac{x}{\beta_t}\right),
\end{equation}
where $q_r$ denotes the density of $x_\star + rz$. Correspondingly, the estimated drift defines an estimated score $\hat{s}_r$ via
\begin{equation}
\label{eqn-b-t-as-scores-approx}
    \hat{b}_t(x) = \frac{\dot\beta_t}{\beta_t}x + \frac{t\sigma_t^2}{\beta_t}\left(\frac{\dot\beta_t}{\beta_t}-\frac{\dot\sigma_t}{\sigma_t}\right)\hat{s}_{\frac{\sqrt{t}\,\sigma_t}{\beta_t}}\!\left(\frac{x}{\beta_t}\right).
\end{equation}
These representations make explicit that the drift at time $t$ depends on the score at noise level $r(t) = \sqrt{t}\,\sigma_t/\beta_t$, with a schedule-dependent reparametrization of the noise scale.

\medskip
\noindent\textit{Step 2: Evaluation of $\mathrm{KL}^\star$.}
Recall from~\eqref{eq:kl:a2} that the path-space KL divergence with diffusion coefficient $g_t$ is
\begin{equation}
\label{eqn-KL-path}
        \mathrm{KL} (\mathbb{P}_{X^g}\|\mathbb{P}_{\hat X^g})  = \frac12 \int_0^1 g^{-2}_t \left|1+\tfrac12 \beta_t  A_t(g_t^2-\sigma_t^2)\right|^2 \bE[ |b_t(X^g_t)-\hat b_t(X^g_t)|^2] \,{\rm d}t.
\end{equation}
Inserting $g_t = g_t^{\rm F}$ from~\eqref{eqn-follmer-gt}, together with the score representations~\eqref{eqn-b-t-as-scores} and~\eqref{eqn-b-t-as-scores-approx}, and using $X_t^g \stackrel{d}{=} I_t$, a direct calculation yields 
\begin{equation}
\begin{aligned}
    \mathrm{KL}^\star &= 2 \int_0^1 \max\!\left\{0,\; \frac{\sigma_t\sqrt{t}}{\beta_t}\frac{\rm d}{{\rm d}t}\!\left(-\frac{\sigma_t\sqrt{t}}{\beta_t}\right)\right\} \\
    & \quad \times\bE\!\left[\left|\nabla \log q_{\frac{\sigma_t\sqrt{t}}{\beta_t}}\!\left(x_\star+\frac{\sigma_t\sqrt{t}}{\beta_t}z\right) - \hat{s}_{\frac{\sigma_t\sqrt{t}}{\beta_t}}\!\left(x_\star+\frac{\sigma_t\sqrt{t}}{\beta_t}z\right)\right|^2\right] {\rm d}t.
\end{aligned}
\end{equation}
Setting $r = \sigma_t\sqrt{t}/\beta_t$ and performing the change of variables $t \mapsto r$ gives~\eqref{eqn-KL-invariant}. Since the right-hand side depends only on the score estimation error across noise scales, it is independent of $(\beta_t, \sigma_t)$.
\end{proof}

Theorem~\ref{thm:schedule-invariance} shows that different schedules are statistically equivalent in path-space relative entropy after optimal tuning: the minimized KL divergence coincides with that of score-based diffusion models initialized from a Gaussian source~\cite{kingma2021variational,chen2025lipschitz}. Different schedules nevertheless induce different optimization landscapes and numerical behavior, and the choice of schedule remains consequential in practical implementation~\cite{lim2024elucidating}.

\section{Numerical Illustrations of Path-Space Variational Optimality}
\label{sec-numerical illustration}

We illustrate the practical impact of the path-space variational principle of Theorem~\ref{prop-min-KL} through two experiments, both motivated by probabilistic forecasting and data assimilation. In each case the target $x_\star$ is a \emph{conditional} law of a future state given a past observation rather than an unconditional distribution. The framework of Sections~\ref{sec:Stochastic Interpolants}--\ref{sec:invariance} extends to this conditional setting verbatim by interpreting every density, drift, and KL divergence pointwise in the conditioning variable $c$: one trains a single drift $\hat b_t(x\mid c)$ on the loss~\eqref{eqn-loss-for-interpolants} with $x_\star$ replaced by $x_\star\mid c$, and Theorem~\ref{prop-min-KL} continues to characterize the diffusion coefficient $g_t$ that minimizes the conditional path-space KL divergence between the exact and learned generative SDEs (in expectation over $c$).

\smallskip
\paragraph{\bf Two experiments.}
Section~\ref{sec:numerics-ou} considers a one-dimensional Ornstein--Uhlenbeck forecasting problem for which the conditional drift $b_t(x\mid c)$ is available in closed form. This affords a direct Monte Carlo estimate of the path-space KL divergence~\eqref{eq:kl:a2} between the exact and learned generative dynamics, and isolates the schedule-induced contribution. We show that the F\"ollmer choice achieves an order-of-magnitude smaller path-space KL than the alternatives, exactly as Theorem~\ref{prop-min-KL} predicts.

Section~\ref{sec:numerics-ns} then considers the F\"ollmer schedule on a high-dimensional task in which the \emph{entire} generative trajectory enters the computation: a data-assimilation problem on a two-dimensional Kolmogorov flow. Path-space variational optimality is a natural notion of optimality for such tasks, and we report that the F\"ollmer choice yields the smallest posterior-mean error among the three diffusion coefficients tested.

Throughout this section we fix the linear-linear schedule $\beta_t = t$, $\sigma_t = 1-t$ and compare three choices of diffusion coefficient: the \emph{baseline} $g_t = \sigma_t = 1-t$ of Theorem~\ref{thm:1:b}; the \emph{constant} $g_t = 1$; and the \emph{F\"ollmer} choice $g_t^{\rm F} = \sqrt{1-t^2}$ of Theorem~\ref{prop-min-KL}. All three are run from the same estimated drift $\hat b_t$, composed at inference time through~\eqref{eq:sde-tuning-g-approx}; only the diffusion coefficient differs.

\subsection{Ornstein--Uhlenbeck process forecasting}
\label{sec:numerics-ou}
Let $(Y_s)_{s\geq 0}$ be the one-dimensional Ornstein--Uhlenbeck process ${\rm d}Y_s = -\lambda Y_s\,{\rm d}s + \sigma\,{\rm d}W_s$ with $\lambda = \sigma = 1$, sampled in its stationary regime $Y_s \sim \sfN(0, \sigma^2/(2\lambda))$. For a fixed lead time $\tau = 0.5$, we treat the present state $Y_s$ as the conditioning variable $c$ and target the conditional forecast increment
\begin{equation}
\label{eq:ou-target}
    x_\star \mid c = (Y_{s+\tau} - Y_s)\mid Y_s = c .
\end{equation}
The conditional law $x_\star\mid c$ is Gaussian, so the conditional baseline drift
\[
    b_t(x\mid c) = \E\bigl[\dot\beta_t x_\star + \dot\sigma_t\sqrt{t}\,z \,\big|\, x_t = x,\, c\bigr]
\]
is available in closed form, and we can compare it to its estimator $\hat b_t(x\mid c)$ exactly. We train $\hat b_t$ by minimizing the square loss~\eqref{eqn-loss-for-interpolants} (with $x_\star$ replaced by $x_\star\mid c$ and $c\sim \sfN(0,\sigma^2/(2\lambda))$ resampled per minibatch), using a three-layer MLP of hidden width $128$ with a Gaussian-Fourier time embedding, AdamW for $2\times 10^4$ steps at learning rate $2\times 10^{-4}$, and batch size $512$. Results below are averaged over five independent training seeds.

For each diffusion coefficient $g$ we compare the exact and learned generative SDEs through two divergences:
\begin{itemize}
\item \emph{Path-space KL} between $\mathbb{P}_{X^g}$ and $\mathbb{P}_{\hat X^g}$, in the Girsanov form~\eqref{eq:kl:a2},
\begin{equation}
\label{eq:ou-path-kl}
    \mathrm{KL}(\mathbb{P}_{X^g}\|\mathbb{P}_{\hat X^g}) = \tfrac{1}{2}\int_{0}^{1} g_t^{-2}\,\bigl|1 + \tfrac{1}{2}\beta_t A_t(g_t^2 - \sigma_t^2)\bigr|^2\, \E\bigl[|b_t(I_t\mid c) - \hat b_t(I_t\mid c)|^2\bigr]\,{\rm d}t .
\end{equation}
Using the identity $X_t^g \stackrel{d}{=} I_t$, the expectation is evaluated by drawing $t\sim \mathcal{U}(0,1)$ and $X_t$ directly from the interpolant law; no SDE trajectory needs to be simulated. We use $10^5$ Monte Carlo samples (jointly over $t$, $c$, and $X_t$).
\item \emph{Terminal marginal KL} at $t = 1-t_{\min}$ between $\hat X_t^g$ and the target $x_\star\mid c$, computed between $10^4$ Euler--Maruyama samples of $\hat X^g$ on $200$ uniform steps and the analytic target density via Gaussian kernel density estimators.
\end{itemize}
For numerical stability we integrate on $[t_{\min}, 1-t_{\min}]$ with $t_{\min} = 10^{-3}$ rather than $[0,1]$ throughout. The truncation changes the reported path-KL values by less than the Monte Carlo standard deviation.

\begin{table}[h]
\centering
\small
\begin{tabular}{lcc}
\toprule
diffusion coefficient & path-space KL & terminal marginal KL \\
\midrule
Baseline, $g_t = 1-t$            & $1.56 \pm 0.76$                          & $(8.2 \pm 2.9)\cdot 10^{-4}$ \\
Constant, $g_t = 1$              & $(4.5 \pm 2.2)\cdot 10^{-1}$             & $(8.9 \pm 5.6)\cdot 10^{-3}$ \\
F\"ollmer, $g_t = \sqrt{1-t^2}$  & ${\bf (1.5 \pm 0.5)\cdot 10^{-2}}$             & ${\bf (6.0 \pm 2.4)\cdot 10^{-4}}$ \\
\bottomrule
\end{tabular}
\caption{Path-space and terminal-marginal KL divergences for the OU forecasting target~\eqref{eq:ou-target} under the linear-linear schedule. Mean $\pm$ standard deviation over five training seeds and Monte Carlo average.}
\label{tab:ou}
\end{table}
\paragraph{\bf Results.} Table~\ref{tab:ou} reports both divergences. On the path-space KL, the F\"ollmer choice dominates, improving on the constant diffusion by roughly one and a half orders of magnitude and on the baseline by two. This ordering is exactly the one predicted by Theorem~\ref{prop-min-KL}: the integrand of~\eqref{eq:ou-path-kl} is pointwise minimized at $g = g_t^{\rm F}$, while the alternatives pay a penalty through the prefactor $g_t^{-2}\bigl|1+\tfrac12 \beta_t A_t(g_t^2-\sigma_t^2)\bigr|^2$.

For comparison, on the terminal marginal KL the same ordering holds: F\"ollmer is about $1.4\times$ better than the baseline and $\sim\!15\times$ better than the constant choice. The two divergences are related by the data-processing inequality, $\mathrm{KL}(\mathrm{Law}(X_1)\|\mathrm{Law}(\hat X_1)) \le \mathrm{KL}(\mathbb{P}_{X^g}\|\mathbb{P}_{\hat X^g})$.

\subsection{Posterior sampling for a 2D Kolmogorov flow}
\label{sec:numerics-ns}

We now showcase the F\"ollmer schedule on a high-dimensional posterior-sampling (data-assimilation) task for a two-dimensional Kolmogorov flow. 

\vspace{0.2em}
\paragraph{\bf Prior: conditional forecasting model.}
We work with the two-dimensional incompressible Navier--Stokes equations in vorticity form on the periodic torus $\mathbb{T}^2 = [0,2\pi)^2$,
\begin{equation}
\label{eq:kolmogorov-pde}
    \partial_s \omega + (u\cdot\nabla)\omega = \nu\,\Delta\omega + f - \alpha\,\omega,
    \qquad u = \nabla^\perp \Delta^{-1}\omega,
\end{equation}
with viscosity $\nu = 10^{-3}$, linear drag $\alpha = 0.1$, and Kolmogorov forcing $f(x_1,x_2) = \sin(4 x_2)$, giving a Reynolds number of order $10^3$. The flow is in a statistically stationary, weakly turbulent regime. We discretise the state $\omega\in\R^{128\times 128}$ on a uniform grid and integrate~\eqref{eq:kolmogorov-pde} with a pseudo-spectral scheme, recording snapshots every $\Delta t = 0.05$ time units. 

We define the conditioning variable as a $4\times$ average-pooled version of the current state, $\tilde\omega_t = A_4 \omega_t \in \R^{32\times 32}$, where $A_k$ denotes the linear $k\times k$ block-averaging operator on a $128\times 128$ grid. Many fine states $\omega_{t+\tau}$ are consistent with the same coarse $\tilde\omega_t$, which makes the forecast genuinely probabilistic. The prior we learn is therefore the conditional density
\[
    p_\tau(\omega_{t+\tau}\mid \tilde\omega_t),
\]
trained via the conditional stochastic interpolant framework of Section~\ref{sec:Stochastic Interpolants}: a single drift network $\hat b_t(x\mid \tilde\omega_t)$, a $\sim\!2$M-parameter U-Net taking $\tilde\omega_t$ as an additional input channel, is fit to the conditional version of the loss~\eqref{eqn-loss-for-interpolants}. The lag is set to $\tau = 0.5$, approximately half a decorrelation time of the flow: at shorter lags the forecast becomes nearly deterministic, while at much longer lags the conditional density approaches the climatology, so we choose an intermediate lag corresponding to the genuinely uncertain regime.

\vspace{0.3em}
\paragraph{\bf Observation model and posterior target.}
We then assume a noisy linear observation of the future state,
\begin{equation}
\label{eq:ns-obs}
    y = A_8 \omega_{t+\tau} + \eta, \qquad \eta\sim\sfN(0,\sigma_y^2 I_{16\times 16}), \quad \sigma_y = 0.3,
\end{equation}
where $A_8$ is an $8\times$ average-pooling operator. The observation $y\in \R^{16\times 16}$ has roughly $1.5\%$ of the dimension of the fine state and is independent of the conditioning $\tilde\omega_t$. The posterior of interest is
\[
    p(\omega_{t+\tau}\mid \tilde\omega_t,\, y)\propto p_\tau(\omega_{t+\tau}\mid\tilde\omega_t)\,p(y\mid\omega_{t+\tau}) ,
\]
a typical \emph{single-step data-assimilation} target.

\vspace{0.3em}
\paragraph{\bf Sampler.}
We derive the posterior sampler by writing the prior drift in score form and substituting the posterior score by Bayes' rule. For the linear-linear schedule, a direct computation rewrites~\eqref{eq:sde-tuning-g-approx} as
\begin{equation}
\label{eq:bg-score-decomp}
    \hat b_t^g(x\mid\tilde\omega_t) = \frac{x}{t} + C_g(t)\,\nabla_x\log\rho_t(x\mid\tilde\omega_t), \qquad C_g(t) := \frac{g_t^2 + (1-t^2)}{2},
\end{equation}
where $\rho_t(\cdot\mid\tilde\omega_t)$ is the conditional density of $X_t^g$ given $\tilde\omega_t$. To sample from the posterior $p(\omega_{t+\tau}\mid\tilde\omega_t,y)$, we apply Bayes' rule to the time-$t$ score: writing $\rho_t(x\mid\tilde\omega_t,y)\propto \rho_t(x\mid\tilde\omega_t)\,\phi_t(x)$ with $\phi_t(x) := p(y\mid x_t = x,\tilde\omega_t)$, we have
\[
    \nabla_x\log\rho_t(x\mid\tilde\omega_t,y) = \nabla_x\log\rho_t(x\mid\tilde\omega_t) + \nabla_x\log\phi_t(x).
\]
Substituting this conditional score in place of the unconditional one in~\eqref{eq:bg-score-decomp} yields the posterior generative SDE
\begin{equation}
\label{eq:ns-guided-sde}
    {\rm d}\hat X_t^g = \bigl[\hat b_t^g(\hat X_t^g\mid\tilde\omega_t) + C_g(t)\,\nabla_x\log\phi_t(\hat X_t^g)\bigr]\,{\rm d}t + g_t\,{\rm d}W_t ,
\end{equation}
which differs from the prior SDE by the addition of $C_g(t)\,\nabla\log\phi_t$ in the drift. The likelihood at intermediate time $t$, $\phi_t(x) := p(y\mid x_t = x,\tilde\omega_t)$, is intractable since it requires marginalising the network drift over future intermediate states. Following classifier guidance \cite{dhariwal2021diffusion} and diffusion posterior sampling (DPS) \cite{chungdiffusion}, we approximate it by $\phi_t(x) \approx p\bigl(y\mid \hat x_1(x,t)\bigr)$, where the \emph{denoiser}
\[
    \hat x_1(x,t) := \E[x_\star\mid x_t = x,\,\tilde\omega_t]
\]
is a one-step prediction of the terminal state from the current intermediate state. For the linear-linear schedule the denoiser is given in closed form by the trained drift: starting from the interpolant $x_t = t\,x_\star + (1-t)\sqrt{t}\,z$, with $z\sim\sfN(0,I)$ independent of $x_\star$, the baseline drift is
\[
    b_t(x\mid\tilde\omega_t) = \E\bigl[\dot\beta_t x_\star + \dot\sigma_t\sqrt{t}\,z \,\big|\, x_t = x,\,\tilde\omega_t\bigr] = \E[x_\star\mid x_t,\tilde\omega_t] - \sqrt{t}\,\E[z\mid x_t,\tilde\omega_t].
\]
Combining this with the linear constraint $x_t = t\,\E[x_\star\mid x_t,\tilde\omega_t] + (1-t)\sqrt{t}\,\E[z\mid x_t,\tilde\omega_t]$ (taking the conditional expectation of the interpolant) and solving the $2\times 2$ linear system for $\E[x_\star\mid x_t,\tilde\omega_t]$ yields
\begin{equation}
\label{eq:tweedie-zps}
    \hat x_1(x,t) = x + (1-t)\,\hat b_t(x\mid\tilde\omega_t),
\end{equation}
which we use as a plug-in estimator with $\hat b_t$ in place of $b_t$.

\vspace{0.3em}
\paragraph{\bf Results.}
Table~\ref{tab:ns-posterior} reports the relative posterior-mean RMSE, defined as \[\|\bar\omega_{t+\tau} - \omega_{t+\tau}^{\rm true}\|_2/\|\omega_{t+\tau}^{\rm true}\|_2,\] where $\bar\omega_{t+\tau}$ is the ensemble mean of $16$ posterior samples drawn by integrating~\eqref{eq:ns-guided-sde} from independent initial draws. The reported number is averaged over six held-out test pairs $(\tilde\omega_t, \omega_{t+\tau})$, with the $\pm$ being the standard deviation across these pairs. The F\"ollmer schedule attains the smallest RMSE, with a $\sim\!12\%$ improvement over the baseline and $\sim\!7\%$ over the constant choice. The reported $\pm$ may look large relative to these gaps, but it is dominated by the variation in test-pair difficulty (per-pair RMSE values range from $0.21$ to $0.42$), which is shared across all schedules; the F\"ollmer improvement is in fact consistent across every individual test pair.

Posterior sampling is a task in which the \emph{entire} generative trajectory contributes to each output: the integrated effect of guidance throughout the SDE shapes the resulting sample. We believe this is a regime in which the path-space variational principle of Theorem~\ref{prop-min-KL} may be relevant, and the F\"ollmer schedule's advantage in Table~\ref{tab:ns-posterior} is consistent with this view. These findings complement the related observation in \cite{domingo2024adjoint} that memoryless noise schedules are advantageous for posterior sampling with Gaussian-base diffusion models.

Figure~\ref{fig:kolmogorov-flow-forecast} shows a representative sample: the ensemble mean captures the truth, and the ensemble standard deviation correlates with the pointwise error of the ensemble mean.

\begin{table}[h]
\centering
\small
\begin{tabular}{lc}
\toprule
diffusion coefficient & posterior-mean RMSE \\
\midrule
Baseline, $g_t = 1-t$          & $0.313 \pm 0.090$ \\
Constant, $g_t = 1$            & $0.297 \pm 0.084$ \\
F\"ollmer, $g_t = \sqrt{1-t^2}$ & $\mathbf{0.275 \pm 0.091}$ \\
\bottomrule
\end{tabular}
\caption{Relative posterior-mean RMSE on the Kolmogorov-flow data-assimilation task~\eqref{eq:ns-obs} at forecast lag $\tau = 0.5$, sampling from the posterior SDE~\eqref{eq:ns-guided-sde}. Mean $\pm$ standard deviation across six held-out test pairs $(\tilde\omega_t, \omega_{t+\tau})$ drawn from disjoint test trajectories, with $16$ posterior samples per test pair and $100$ Euler--Maruyama steps on $[t_{\min}, 1-t_{\min}]$, $t_{\min} = 10^{-2}$. The reported $\pm$ is dominated by the variation in test-pair difficulty; for the schedule comparison, the relevant uncertainty is the standard error of the per-pair paired difference, which is roughly $5\times$ smaller (see main text).}
\label{tab:ns-posterior}
\end{table}

\begin{figure}[ht]
    \centering
    \includegraphics[width=0.9\linewidth]{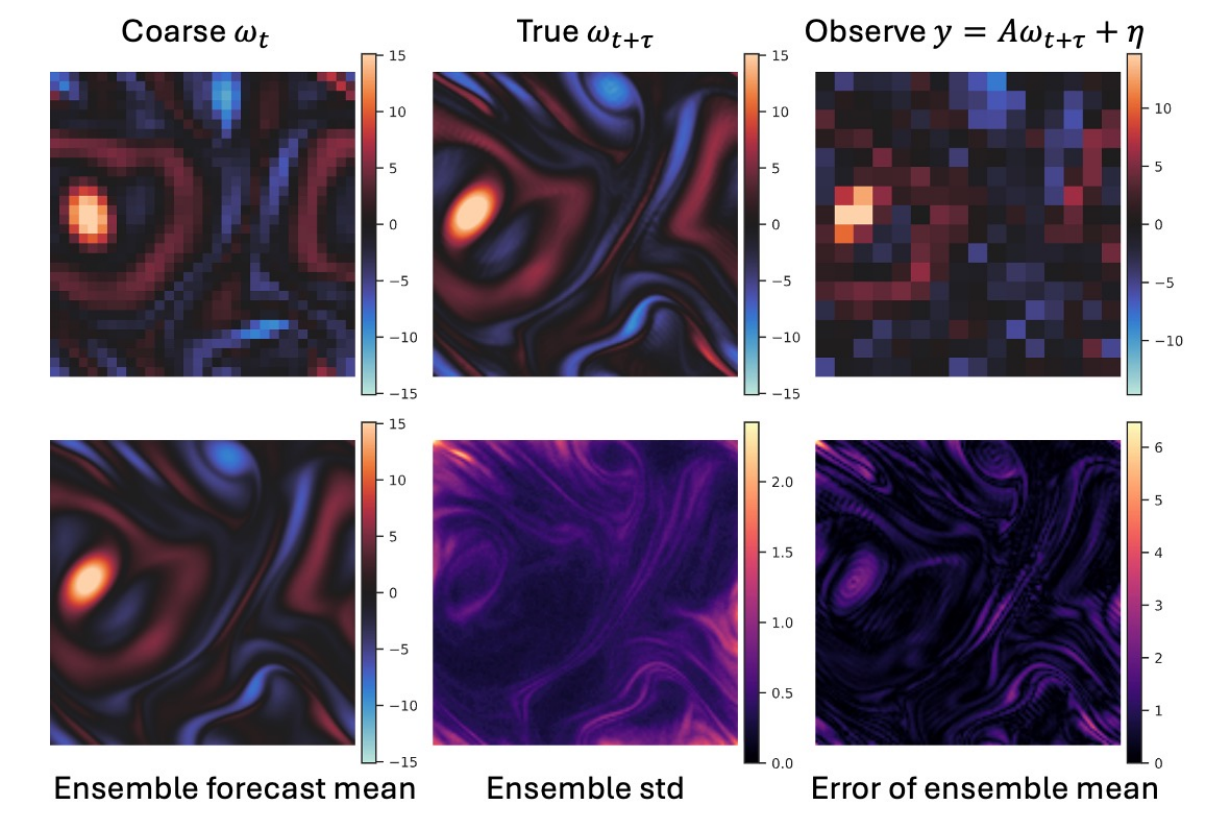}
    \caption{A representative sample from the Kolmogorov-flow posterior-sampling experiment, drawn under the F\"ollmer schedule with $N = 16$ posterior samples on one held-out test pair. Top row: coarse conditioning variable $\tilde\omega_t$, ground-truth future state $\omega_{t+\tau}$, and observation $y = A_8\,\omega_{t+\tau} + \eta$. Bottom row: ensemble-mean forecast, ensemble standard deviation $\hat\sigma$, and pointwise error of the ensemble mean relative to $\omega_{t+\tau}$. The ensemble spread correlates with the ensemble-mean error, and the pointwise error is within $\approx 3\hat\sigma$ throughout the domain.}
    \label{fig:kolmogorov-flow-forecast}
\end{figure}

\section{Conclusions}
\label{sec:conclusions}

This paper has developed a variational analysis of generative diffusions that transport a point source to a prescribed target distribution, organized around three results. First, within the stochastic interpolant framework the choice of diffusion coefficient is a free parameter that does not affect marginals; minimizing a path-space Kullback--Leibler criterion over this parameter selects, in closed form, a distinguished diffusion which we identify as a F\"ollmer process. This yields a new variational characterization of F\"ollmer processes in which the reference diffusion is not imposed \emph{a priori}, but emerges from optimization over admissible dynamics with prescribed marginals---in particular, the schedules determine the reference process while the target enters only through endpoint constraints. Second, the F\"ollmer drift admits a simulation-free conditional-expectation representation across linear reference processes (Proposition~\ref{prop-Follmer-SDE}), so it can be estimated from data by square-loss regression. Third, once the diffusion coefficient is tuned optimally, different interpolation schedules become statistically equivalent in path-space relative entropy.

These results clarify the role of diffusion coefficients and interpolation schedules in point-source generative models, and provide a principled foundation for their design in applications such as probabilistic forecasting and data assimilation, as demonstrated on a low-dimensional and a high-dimensional experiment in Section~\ref{sec-numerical illustration}. More broadly, they illustrate how variational principles can be used to disentangle marginal constraints from path-space optimality in stochastic dynamics. Natural directions for future work include extensions to nonlinear reference processes, infinite-dimensional settings (e.g., \cite{chen2025scale}), problems with additional structural constraints, and analysis of the effect of time discretizations.

\bibliography{ref_follmer}

@article{hairer2006ergodicity,
  title={Ergodicity of the 2D Navier-Stokes equations with degenerate stochastic forcing},
  author={Hairer, Martin and Mattingly, Jonathan C},
  journal={Annals of Mathematics},
  pages={993--1032},
  year={2006},
  publisher={JSTOR}
}

@article{albergo2023stochastic,
  title={Stochastic interpolants: A unifying framework for flows and diffusions},
  author={Albergo, Michael and Boffi, Nicholas M and Vanden-Eijnden, Eric},
  journal={Journal of Machine Learning Research},
  volume={26},
  number={209},
  pages={1--80},
  year={2025}
}

@inproceedings{albergo2022building,
  title={Building Normalizing Flows with Stochastic Interpolants},
  author={Albergo, Michael S and Vanden-Eijnden, Eric},
  booktitle={The Eleventh International Conference on Learning Representations},
  year={2022}
}

@inproceedings{liu2022flow,
  title={Flow Straight and Fast: Learning to Generate and Transfer Data with Rectified Flow},
  author={Liu, Xingchao and Gong, Chengyue and Liu, Qiang},
  booktitle={The Eleventh International Conference on Learning Representations},
  year={2022}
}

@inproceedings{lipman2022flow,
  title={Flow Matching for Generative Modeling},
  author={Lipman, Yaron and Chen, Ricky TQ and Ben-Hamu, Heli and Nickel, Maximilian and Le, Matthew},
  booktitle={The Eleventh International Conference on Learning Representations},
  year={2022}
}

@inproceedings{ho2020denoising,
  title={Denoising diffusion probabilistic models},
  author={Ho, Jonathan and Jain, Ajay and Abbeel, Pieter},
  booktitle={Advances in neural information processing systems},
  volume={33},
  pages={6840--6851},
  year={2020}
}

@article{song2020score,
  title={Score-based generative modeling through stochastic differential equations},
  author={Song, Yang and Sohl-Dickstein, Jascha and Kingma, Diederik P and Kumar, Abhishek and Ermon, Stefano and Poole, Ben},
  journal={arXiv preprint arXiv:2011.13456},
  year={2020}
}

@article{follmer1986time,
  title={Time reversal on Wiener space},
  author={F{\"o}llmer, H},
  journal={Stochastic Processes—Mathematics and Physics},
  pages={119--129},
  year={1986},
  publisher={Springer}
}

@inproceedings{schrodinger1932theorie,
  title={Sur la th{\'e}orie relativiste de l'{\'e}lectron et l'interpr{\'e}tation de la m{\'e}canique quantique},
  author={Schr{\"o}dinger, E},
  booktitle={Annales de l'institut Henri Poincar{\'e}},
  volume={3},
  pages={269--310},
  year={1932}
}

@article{leonard2014survey,
  title={A survey of the Schr{\"o}dinger problem and some of its connections with optimal transport},
  author={L{\'e}onard, Christian},
  journal={Discrete and Continuous Dynamical Systems-Series A},
  volume={34},
  number={4},
  pages={1533--1574},
  year={2014}
}

@article{huang2021schrodinger,
  title={{S}chr{\"o}dinger-{F\"o}llmer sampler: sampling without ergodicity},
  author={Huang, Jian and Jiao, Yuling and Kang, Lican and Liao, Xu and Liu, Jin and Liu, Yanyan},
  journal={arXiv preprint arXiv:2106.10880},
  year={2021}
}

@article{jiao2021convergence,
  title={Convergence Analysis of {S}chr{\"o}dinger-{F\"o}llmer Sampler without Convexity},
  author={Jiao, Yuling and Kang, Lican and Liu, Yanyan and Zhou, Youzhou},
  journal={arXiv preprint arXiv:2107.04766},
  year={2021}
}

@article{vargas2023bayesian,
  title={Bayesian learning via neural Schr{\"o}dinger--F{\"o}llmer flows},
  author={Vargas, Francisco and Ovsianas, Andrius and Fernandes, David and Girolami, Mark and Lawrence, Neil D and N{\"u}sken, Nikolas},
  journal={Statistics and Computing},
  volume={33},
  number={1},
  pages={3},
  year={2023},
  publisher={Springer}
}

@article{liu20232,
  title={{$I^{2}SB$}: Image-to-Image {S}chr{\"o}dinger Bridge},
  author={Liu, Guan-Horng and Vahdat, Arash and Huang, De-An and Theodorou, Evangelos A and Nie, Weili and Anandkumar, Anima},
  journal={arXiv preprint arXiv:2302.05872},
  year={2023}
}

@inproceedings{tzen2019theoretical,
  title={Theoretical guarantees for sampling and inference in generative models with latent diffusions},
  author={Tzen, Belinda and Raginsky, Maxim},
  booktitle={Conference on Learning Theory},
  pages={3084--3114},
  year={2019},
  organization={PMLR}
}

@inproceedings{debortoli2021diffusion,
  title={Diffusion schr{\"o}dinger bridge with applications to score-based generative modeling},
  author={De Bortoli, Valentin and Thornton, James and Heng, Jeremy and Doucet, Arnaud},
  booktitle={Advances in Neural Information Processing Systems},
  volume={34},
  pages={17695--17709},
  year={2021}
}

@inproceedings{wang2021deep,
  title={Deep generative learning via schr{\"o}dinger bridge},
  author={Wang, Gefei and Jiao, Yuling and Xu, Qian and Wang, Yang and Yang, Can},
  booktitle={International Conference on Machine Learning},
  pages={10794--10804},
  year={2021},
  organization={PMLR}
}

@inproceedings{zhang2021path,
  title={Path Integral Sampler: A Stochastic Control Approach For Sampling},
  author={Zhang, Qinsheng and Chen, Yongxin},
  booktitle={International Conference on Learning Representations},
  year={2021}
}

@article{chen2021stochastic,
  title={Stochastic control liaisons: Richard sinkhorn meets gaspard monge on a schrodinger bridge},
  author={Chen, Yongxin and Georgiou, Tryphon T and Pavon, Michele},
  journal={Siam Review},
  volume={63},
  number={2},
  pages={249--313},
  year={2021},
  publisher={SIAM}
}

@article{gyongy1986mimicking,
  title={Mimicking the one-dimensional marginal distributions of processes having an It{\^o} differential},
  author={Gy{\"o}ngy, Istv{\'a}n},
  journal={Probability theory and related fields},
  volume={71},
  number={4},
  pages={501--516},
  year={1986},
  publisher={Springer}
}

@article{peluchetti2023non,
  title={Non-denoising forward-time diffusions},
  author={Peluchetti, Stefano},
  journal={arXiv preprint arXiv:2312.14589},
  year={2023}
}

@inproceedings{eldan2020stability,
  title={Stability of the logarithmic Sobolev inequality via the F{\"o}llmer process},
  author={Eldan, Ronen and Lehec, Joseph and Shenfeld, Yair},
  booktitle={Annales de l’Institut Henri Poincar{\'e}-Probabilit{\'e}s et Statistiques},
  volume={56},
  pages={2253--2269},
  year={2020}
}

@inproceedings{lehec2013representation,
  title={Representation formula for the entropy and functional inequalities},
  author={Lehec, Joseph},
  booktitle={Annales de l'IHP Probabilit{\'e}s et statistiques},
  volume={49},
  pages={885--899},
  year={2013}
}

@article{eldan2018regularization,
  title={Regularization under diffusion and anticoncentration of the information content},
  author={Eldan, Ronen and Lee, James R},
  journal={Duke Mathematical Journal},
  volume={167},
  number={5},
  pages={969--993},
  year={2018},
  publisher={Duke University Press}
}

@inproceedings{shi2024diffusion,
  title={Diffusion Schr{\"o}dinger bridge matching},
  author={Shi, Yuyang and De Bortoli, Valentin and Campbell, Andrew and Doucet, Arnaud},
  booktitle={Advances in Neural Information Processing Systems},
  volume={36},
  year={2024}
}

@book{karatzas2014brownian,
  title={Brownian motion and stochastic calculus},
  author={Karatzas, Ioannis and Shreve, Steven},
  volume={113},
  year={2014},
  publisher={springer}
}

@inproceedings{chen2024probabilistic,
  title={Probabilistic Forecasting with Stochastic Interpolants and {F}{\"o}llmer Processes},
  author={Chen, Yifan and Goldstein, Mark and Hua, Mengjian and Albergo, Michael Samuel and Boffi, Nicholas Matthew and Vanden-Eijnden, Eric},
  booktitle={Forty-first International Conference on Machine Learning},
 year={2024}
}

@inproceedings{chen2023sampling,
  title={Sampling is as easy as learning the score: theory for diffusion models with minimal data assumptions},
  author={Chen, Sitan and Chewi, Sinho and Li, Jerry and Li, Yuanzhi and Salim, Adil and Zhang, Anru R},
  booktitle={International Conference on Learning Representations},
  year={2023}
}

@article{gao2023gaussian,
  title={Gaussian interpolation flows},
  author={Gao, Yuan and Huang, Jian and Jiao, Yuling},
  journal={arXiv preprint arXiv:2311.11475},
  year={2023}
}

@article{haussmann1986time,
  title={Time reversal of diffusions},
  author={Haussmann, Ulrich G and Pardoux, Etienne},
  journal={The Annals of Probability},
  pages={1188--1205},
  year={1986},
  publisher={JSTOR}
}

@article{cattiaux2023time,
  title={Time reversal of diffusion processes under a finite entropy condition},
  author={Cattiaux, Patrick and Conforti, Giovanni and Gentil, Ivan and L{\'e}onard, Christian},
  journal={Annales de l'Institut Henri Poincar{\'e} (B) Probabilit{\'e}s et Statistiques},
  volume={59},
  number={4},
  pages={1844--1881},
  year={2023},
  organization={Institut Henri Poincar{\'e}}
}

@article{lim2024elucidating,
  title={Elucidating the Design Choice of Probability Paths in Flow Matching for Forecasting},
  author={Lim, Soon Hoe and Wang, Yijin and Yu, Annan and Hart, Emma and Mahoney, Michael W and Li, Xiaoye S and Erichson, N Benjamin},
  journal={arXiv preprint arXiv:2410.03229},
  year={2024}
}

@article{mucke2025physics,
  title={Physics-aware generative models for turbulent fluid flows through energy-consistent stochastic interpolants},
  author={M{\"u}cke, Nikolaj T and Sanderse, Benjamin},
  journal={arXiv preprint arXiv:2504.05852},
  year={2025}
}

@inproceedings{sohl2015deep,
  title={Deep unsupervised learning using nonequilibrium thermodynamics},
  author={Sohl-Dickstein, Jascha and Weiss, Eric A and Maheswaranathan, Niru and Ganguli, Surya},
  booktitle={Proceedings of the 32nd International Conference on International Conference on Machine Learning-Volume 37},
  pages={2256--2265},
  year={2015}
}

@article{song2019generative,
  title={Generative modeling by estimating gradients of the data distribution},
  author={Song, Yang and Ermon, Stefano},
  journal={Advances in neural information processing systems},
  volume={32},
  year={2019}
}

@article{chen2025flowdas,
  title={{FlowDAS}: A Stochastic Interpolant-based Framework for Data Assimilation},
  author={Chen, Siyi and Jia, Yixuan and Qu, Qing and Sun, He and Fessler, Jeffrey A},
  journal={arXiv preprint arXiv:2501.16642},
  year={2025}
}

@article{sabti2025generative,
  title={A generative modeling approach to reconstructing 21 cm tomographic data},
  author={Sabti, Nashwan and Sudha, Ram Purandhar Reddy and Mu{\~n}oz, Julian B and Mishra-Sharma, Siddharth and Youn, Taewook},
  journal={Machine Learning: Science and Technology},
  volume={6},
  number={1},
  pages={015039},
  year={2025},
  publisher={IOP Publishing}
}

@inproceedings{cuesta-lazaro2024joint,
    title={Joint cosmological parameter inference and initial condition reconstruction with Stochastic Interpolants},
    author={Cuesta-Lazaro, Carolina and Bayer, Adrian E and Albergo, Michael S and Mishra-Sharma, Siddharth and Modi, Chirag and Eisenstein, Daniel J},
    booktitle={Machine Learning and the Physical Sciences Workshop},
    year={2024},
    organization={NeurIPS},
    address={Vancouver, Canada},
    month={December}
}

@article{anderson1982reverse,
  title={Reverse-time diffusion equation models},
  author={Anderson, Brian DO},
  journal={Stochastic Processes and their Applications},
  volume={12},
  number={3},
  pages={313--326},
  year={1982},
  publisher={Elsevier}
}

@article{schiodt2025generative,
  title={Generative Super-Resolution of Turbulent Flows via Stochastic Interpolants},
  author={Schi{\o}dt, Martin and M{\"u}cke, Nikolaj Takata and Velte, Clara Marika},
  journal={arXiv preprint arXiv:2508.13770},
  year={2025}
}

@article{chen2025scale,
  title={Scale-Adaptive Generative Flows for Multiscale Scientific Data},
  author={Chen, Yifan and Vanden-Eijnden, Eric},
  journal={arXiv preprint arXiv:2509.02971},
  year={2025}
}

@article{chen2025lipschitz,
  title={Lipschitz-guided design of interpolation schedules in generative models},
  author={Chen, Yifan and Vanden-Eijnden, Eric and Xu, Jiawei},
  journal={arXiv preprint arXiv:2509.01629},
  year={2025}
}

@article{yasuda2025probabilistic,
  title={Probabilistic Super-Resolution for Urban Micrometeorology via a Schr$\backslash$" odinger Bridge},
  author={Yasuda, Yuki and Onishi, Ryo},
  journal={arXiv preprint arXiv:2510.12148},
  year={2025}
}

@article{pooladian2025plug,
  title={Plug-in Estimation of Schr{\"o}dinger Bridges},
  author={Pooladian, Aram-Alexandre and Niles-Weed, Jonathan},
  journal={SIAM Journal on Mathematics of Data Science},
  volume={7},
  number={3},
  pages={1315--1336},
  year={2025},
  publisher={SIAM}
}

@article{horowitz2025baryonbridge,
  title={BaryonBridge: Stochastic Interpolant Model for Fast Hydrodynamical Simulations},
  author={Horowitz, Benjamin and Cuesta-Lazaro, Carolina and Yehia, Omar},
  journal={arXiv preprint arXiv:2510.19224},
  year={2025}
}

@article{kossaifi2026demystifying,
  title={Demystifying Data-Driven Probabilistic Medium-Range Weather Forecasting},
  author={Kossaifi, Jean and Kovachki, Nikola and Mardani, Morteza and Leibovici, Daniel and Ravuri, Suman and Shokar, Ira and Calvello, Edoardo and Abbas, Mohammad Shoaib and Harrington, Peter and Subramaniam, Ashay and others},
  journal={arXiv preprint arXiv:2601.18111},
  year={2026}
}

@article{efron2011tweedie,
  title={Tweedie’s formula and selection bias},
  author={Efron, Bradley},
  journal={Journal of the American Statistical Association},
  volume={106},
  number={496},
  pages={1602--1614},
  year={2011},
  publisher={Taylor \& Francis}
}

@article{kingma2021variational,
  title={Variational diffusion models},
  author={Kingma, Diederik and Salimans, Tim and Poole, Ben and Ho, Jonathan},
  journal={Advances in neural information processing systems},
  volume={34},
  pages={21696--21707},
  year={2021}
}

@article{domingo2024adjoint,
  title={Adjoint matching: Fine-tuning flow and diffusion generative models with memoryless stochastic optimal control},
  author={Domingo-Enrich, Carles and Drozdzal, Michal and Karrer, Brian and Chen, Ricky TQ},
  journal={arXiv preprint arXiv:2409.08861},
  year={2024}
}

@article{dhariwal2021diffusion,
  title={Diffusion models beat gans on image synthesis},
  author={Dhariwal, Prafulla and Nichol, Alexander},
  journal={Advances in neural information processing systems},
  volume={34},
  pages={8780--8794},
  year={2021}
}

@inproceedings{chungdiffusion,
  title={Diffusion Posterior Sampling for General Noisy Inverse Problems},
  author={Chung, Hyungjin and Kim, Jeongsol and Mccann, Michael Thompson and Klasky, Marc Louis and Ye, Jong Chul},
  booktitle={The Eleventh International Conference on Learning Representations},
  year={2023}
}

@inproceedings{klartag2023spectral,
  title={Spectral monotonicity under Gaussian convolution},
  author={Klartag, Bo’az and Putterman, Eli},
  booktitle={Annales de la Facult{\'e} des sciences de Toulouse: Math{\'e}matiques},
  volume={32},
  number={5},
  pages={939--967},
  year={2023}
}
\bibliographystyle{plain}

\newpage
\appendix
\section{Lipschitz Regularity of the Baseline Drift}
\label{appendix-Lipschitz regularity of the drift}
This section presents technical results regarding the Lipschitz regularity of the drift functions, in particular, the proof of the Lipschitz bounds stated in Theorem~\ref{thm:1:b}.

We follow the setting in \eqref{eq:b:def:app}. Consider $x_t  = \beta_t  x_\star + \sigma_t \sqrt{t}\,z$, $t\in[0,1]$, where $x_\star \sim \mu_\star$, $z\sim \sfN(0,\mathrm{I})$, $z\perp x_\star$. The drift $b_t(x)$ can be decomposed as
    \begin{equation}
    b_t(x) =  \dot\beta_t  \mathbb{E}[x_\star|x_t = x] + \dot \sigma_t   \mathbb{E}[\sqrt{t}\,z|x_t = x]\, .
    \end{equation}
Using the relation $x = \beta_t \mathbb{E}[x_\star|x_t = x] + \sigma_t   \mathbb{E}[\sqrt{t}\,z|x_t = x]$, we can equivalently write
\begin{equation}
    b_t(x) =  \frac{\dot{\sigma}_t}{\sigma_t}x + \Big(\dot{\beta}_t - \frac{\beta_t\dot{\sigma}_t}{\sigma_t}\Big)\mathbb{E}[x_\star|x_t = x]\, .
\end{equation}
We aim to characterize the Lipschitz regularity of $b$.  We denote $\rho^\star$ the density of $\mu_\star$ regarding the Lebesgue measure. First, an important observation is as follows: 
\begin{proposition}
\label{prop:cond-exp-gradient}
Suppose there exist constants $C_1,C_2 > 0$ such that
    \begin{equation}
    \label{assump-exp-tail}
        \rho^\star(x)\leq C_2\exp(-C_1|x|)
    \end{equation}
    for any $x \in \mathbb{R}^d$. Assume also $\beta_t, \sigma_t \in C^1([0,1])$ and $\sigma_t > 0$ for $t \in [0,1)$. Then for $t \in [0,1)$ and $x \in \mathbb{R}^d$,
    \begin{equation}
        \nabla_x \mathbb{E}[x_\star|x_t = x] = \frac{\beta_t }{t \sigma^2_t} \mathrm{Cov}(x_\star|x_t=x).
    \end{equation}
\end{proposition}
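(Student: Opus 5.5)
We prove Proposition~\ref{prop:cond-exp-gradient} by writing the conditional expectation as an explicit Gaussian-weighted average and differentiating under the integral sign. For $t\in(0,1)$ set $s_t^2 = t\sigma_t^2>0$. The joint law of $(x_\star,x_t)$ has density $\rho^\star(y)\,\varphi_t(x-\beta_t y)$, where
\[
\varphi_t(w) = (2\pi s_t^2)^{-d/2}\exp\bigl(-|w|^2/(2s_t^2)\bigr).
\]
Hence
\[
\mathbb{E}[x_\star\mid x_t=x] = \frac{N(x)}{D(x)},\qquad N(x)=\int y\,\rho^\star(y)\,\varphi_t(x-\beta_t y)\,{\rm d}y,\qquad D(x)=\int \rho^\star(y)\,\varphi_t(x-\beta_t y)\,{\rm d}y .
\]
Note that $D(x)>0$ for every $x$.

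The key computation is
\[
\nabla_x \varphi_t(x-\beta_t y) = -\frac{x-\beta_t y}{s_t^2}\,\varphi_t(x-\beta_t y).
\]
Differentiating under the integral sign gives
\[
\nabla_x N = -\frac{1}{s_t^2}\int y\otimes(x-\beta_t y)\,\rho^\star\varphi_t\,{\rm d}y,\qquad \nabla_x D = -\frac{1}{s_t^2}\int (x-\beta_t y)\,\rho^\star\varphi_t\,{\rm d}y .
\]
Apply the quotient rule, $\nabla(N/D) = \nabla N/D - (N/D)\otimes \nabla D/D$, and express everything through conditional moments. The terms containing $x$ cancel: they give $-\frac{1}{s_t^2}\bigl(\mathbb{E}[x_\star\mid x_t=x]\otimes x - \mathbb{E}[x_\star\mid x_t=x]\otimes x\bigr)=0$. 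What remains is
\[
\frac{\beta_t}{s_t^2}\Bigl(\mathbb{E}[x_\star x_\star^\top\mid x_t=x] - \mathbb{E}[x_\star\mid x_t=x]\,\mathbb{E}[x_\star\mid x_t=x]^\top\Bigr) = \frac{\beta_t}{t\sigma_t^2}\,\mathrm{Cov}(x_\star\mid x_t=x).
\]
The resulting matrix is symmetric, so the index convention for the Jacobian does not matter.

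The main obstacle is justifying differentiation under the integral sign. This is where the exponential tail bound \eqref{assump-exp-tail} enters. Fix a compact neighborhood $K$ of $x$. The integrands and their $x$-derivatives are bounded by
\[
C(1+|y|^2)\,\rho^\star(y)\sup_{x'\in K}\varphi_t(x'-\beta_t y)\le C'(1+|y|^2)\,e^{-C_1|y|},
\]
since $\varphi_t$ is bounded. This majorant is integrable, so dominated convergence applies. The same bound shows that $N$, $D$ and the second conditional moments are finite and continuous. Gaussian decay of $\varphi_t$ in $y$ (when $\beta_t\neq0$) would also suffice, but the exponential tail handles the case $\beta_t=0$ uniformly.

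The endpoint $t=0$ needs separate treatment, since there $s_0=0$. At $t=0$ we have $\beta_0=0$, so $x_t$ carries no information about $x_\star$. The conditional expectation $\mathbb{E}[x_\star]$ is then constant in $x$ and its gradient is $0$. The right-hand side $\beta_t\,\mathrm{Cov}(x_\star\mid x_t=x)/(t\sigma_t^2)$ also tends to $\dot\beta_0\,\mathrm{Cov}(x_\star)/\sigma_0^2$ as $t\to0$, so I would state this case with the convention that the formula is read as this limit. Alternatively, I would note that the identity is intended for $t\in(0,1)$ and remark on the limit.
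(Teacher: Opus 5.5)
For $t\in(0,1)$ your argument is correct and is the same as the paper's. You write $\mathbb{E}[x_\star\mid x_t=x]$ as a ratio of Gaussian-weighted integrals, differentiate under the integral sign, and read off $\frac{\beta_t}{t\sigma_t^2}\mathrm{Cov}(x_\star\mid x_t=x)$ from the quotient rule. The paper first drops the factor $e^{-|x|^2/(2t\sigma_t^2)}$ and works with weights $\exp(-\tfrac12 M_t|y|^2+m_t\,x\cdot y)$, where $M_t=\beta_t^2/(t\sigma_t^2)$ and $m_t=\beta_t/(t\sigma_t^2)$, so the $x$-terms you cancel by hand never appear. Conversely, keeping the bounded normalized kernel $\varphi_t$ makes your domination step cleaner: for fixed $t\in(0,1)$, a finite second moment of $\mu_\star$ already suffices, so the exponential tail bound is not actually needed there.

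The paragraph on $t=0$ contains a false claim. Whenever $\dot\beta_0>0$ (the paper's standing assumption), one has $M_t\to0$ but $m_t\to\dot\beta_0/\sigma_0^2\neq0$. So the conditional law of $x_\star$ given $x_t=x$ does not tend to $\mu_\star$; when it converges at all, its limit is the exponential tilt $\propto\rho^\star(y)\exp(\dot\beta_0\,x\cdot y/\sigma_0^2)$.

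Consequently $\frac{\beta_t}{t\sigma_t^2}\mathrm{Cov}(x_\star\mid x_t=x)$ tends to $\frac{\dot\beta_0}{\sigma_0^2}$ times the covariance of this tilt. That covariance in general differs from $\mathrm{Cov}(x_\star)$ when $x\neq0$; Gaussian $\mu_\star$ is a special case where they agree. Worse, under the mere bound $\rho^\star\le C_2e^{-C_1|x|}$ the tilt need not even be normalizable once $|x|\dot\beta_0/\sigma_0^2\ge C_1$. This is exactly the blow-up exhibited by the Laplace target in Example~\ref{prop: dot beta for exponential}.

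Your two readings of $t=0$ are also inconsistent with each other. The literal gradient is $0$ (and since $x_0=0$ almost surely, conditioning on $x_0=x\neq0$ has no intrinsic meaning), whereas the limit from $t>0$ is generally nonzero.

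The paper's proof makes the same assertion $\mathrm{Cov}(x_\star\mid x_t=x)\to\mathrm{Cov}(x_\star)$. That is valid only when $\dot\beta_0=0$, so that $m_t\to0$, as in Proposition~\ref{th:decokmp}. The sound course is your second option: state and prove the identity for $t\in(0,1)$, and treat $t=0$ as outside its literal scope.
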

\begin{proof}[Proof of Proposition \ref{prop:cond-exp-gradient}]
By definition, $x_t  = \beta_t  x_\star + \sigma_t \sqrt{t}z$, and the conditional density  of $x_\star | x_t = x$ is proportional to \[\rho^\star(x_\star)\exp(-\frac{|x-\beta_t x_\star|^2}{2\sigma^2_t t}) ,\] 
from which we derive the following formula for the conditional expectation:
\begin{equation}
\label{eqn-cond-expectation-formula}
    \mathbb{E}[x_\star|x_t = x] = \frac{\int_{\R^d} x_\star \rho^\star(x_\star) \exp(-\frac12M_t |x_\star|^2 +m_t x_\star \cdot x ) {\rm d}x_\star}{\int_{\R^d} \rho^\star(x_\star) \exp(-\frac12M_t |x_\star|^2 +m_t x_\star \cdot x) {\rm d}x_\star}.
\end{equation}
Here we defined
    $M_t = \frac{\beta^2_t}{t \sigma^2_t}, m_t = \frac{\beta_t }{t \sigma^2_t}$.
  We differentiate \eqref{eqn-cond-expectation-formula} to derive
\begin{equation}
    \nabla_x \mathbb{E}[x_\star|x_t = x] = m_t\frac{P_t(x)Q_t(x) -R_t(x)R_t(x)^T}{|Q_t(x)|^2} \, ,
\end{equation}
where 
\begin{equation*}
    \begin{aligned}
        P_t(x) &= \int_{\R^d} x_\star x_\star^T \rho^\star(x_\star) \exp(-\frac12M_t |x_\star|^2 +m_t x_\star \cdot x ) {\rm d}x_\star\\
        Q_t(x) & = \int_{\R^d} \rho^\star(x_\star) \exp(-\frac12M_t |x_\star|^2 +m_t x_\star \cdot x) {\rm d}x_\star\\
        R_t(x) &= \int_{\R^d} x_\star \rho^\star(x_\star) \exp(-\frac12M_t |x_\star|^2 +m_t x_\star \cdot x) {\rm d} x_\star\, .
    \end{aligned}
\end{equation*}
To derive the above formula, we need to verify the interchange of limits and integrations. It is guaranteed by using \eqref{assump-exp-tail} and the Lebesgue dominated convergence theorem, for sufficiently small $t$.
Inspecting the above integrals, we can also equivalently write the result as
\[\nabla_x \mathbb{E}[x_\star|x_t = x] = m_t \text{Cov}(x_\star|x_t=x)\, .\]
Using the Lebesgue dominated convergence theorem, we get
\begin{equation}
    \lim_{t \to 0} \text{Cov}(x_\star|x_t=x) = \lim_{t\to 0} \frac{P_t(x)Q_t(x) -R_t(x)R_t(x)^T}{|Q_t(x)|^2} = \text{Cov}(x_\star)\, .
\end{equation}
The proof is complete.
\end{proof}
We analyze the Lipschitz regularity of the drift under two assumptions on $\mu_\star$.
\subsection{Local-in-time Lipschitz bound}
\begin{proposition}
   Assume $\mu_\star$ has bounded support, such that for $x_\star \sim \mu_\star$, it holds that $|x_\star|^2 \leq R^2$ almost surely. We have
    \[|\nabla_x \mathbb{E}[x_\star|x_t = x]|\leq 4R^2 \frac{\beta_t}{t\sigma_t^2}, \quad t \in (0,1), \]
    and 
    \begin{equation}
    \label{eqn-gradient-bound-bounded-support}
        |\nabla_x b_t(x)| \leq \left|\frac{\dot{\sigma}_t}{\sigma_t}\right| + 4R^2 \frac{\beta_t}{t\sigma_t^2}\left|\dot{\beta}_t - \frac{\beta_t\dot{\sigma}_t}{\sigma_t}\right|, \quad t \in (0,1)\, .
    \end{equation}
\end{proposition}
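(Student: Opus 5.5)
The plan is to combine Proposition~\ref{prop:cond-exp-gradient} with an elementary bound on the conditional covariance, and then differentiate the decomposition of $b_t$ written just before it. Bounded support gives the exponential tail condition~\eqref{assump-exp-tail} trivially: the density vanishes outside the ball of radius $R$. If $\mu_\star$ has no density, the formula~\eqref{eqn-cond-expectation-formula} still holds with $\rho^\star(x_\star)\,{\rm d}x_\star$ replaced by $\mu_\star({\rm d}x_\star)$. In that form, differentiation under the integral sign is immediate, because the integrands are continuous in $x$ and bounded by $C(1+R^2)e^{m_t R|x|}$ uniformly on compact sets. Hence, for every $t\in(0,1)$,
\[
\nabla_x \mathbb{E}[x_\star\mid x_t=x] = \frac{\beta_t}{t\sigma_t^2}\,\mathrm{Cov}(x_\star\mid x_t=x).
\]

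The key step is to bound the operator norm of the conditional covariance. I would use the crude estimate
\[
\mathrm{Cov}(x_\star\mid x_t=x) = \mathbb{E}[x_\star x_\star^T\mid x_t=x] - \mathbb{E}[x_\star\mid x_t=x]\,\mathbb{E}[x_\star\mid x_t=x]^T,
\]
together with the triangle inequality. Each term has operator norm at most $R^2$, since $|x_\star|\le R$ almost surely under the conditional law and likewise $|\mathbb{E}[x_\star\mid x_t=x]|\le R$. This gives $|\mathrm{Cov}|\le 2R^2$, which is within the stated constant $4R^2$. The sharper bound $R^2$ also holds by positive semidefiniteness, but no care is needed: the target constant $4R^2$ leaves ample slack, so any of these elementary bounds suffices. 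This yields the first inequality.

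For the second inequality, I would use the representation
\[
b_t(x) = \frac{\dot\sigma_t}{\sigma_t}x + \Bigl(\dot\beta_t - \frac{\beta_t\dot\sigma_t}{\sigma_t}\Bigr)\mathbb{E}[x_\star\mid x_t=x],
\]
which is valid for $t\in(0,1)$ since $\sigma_t>0$ there. Differentiating in $x$ gives $\nabla_x b_t = \frac{\dot\sigma_t}{\sigma_t}\mathrm{I} + \bigl(\dot\beta_t - \frac{\beta_t\dot\sigma_t}{\sigma_t}\bigr)\nabla_x\mathbb{E}[x_\star\mid x_t=x]$. Applying the triangle inequality and the first bound gives~\eqref{eqn-gradient-bound-bounded-support}.

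The only mildly delicate point is justifying the differentiation under the integral when $\mu_\star$ is merely a measure. I expect this to be the main obstacle, although it is routine: compact support makes all moments finite and gives locally uniform domination of the $x$-derivatives of the Gaussian weights.
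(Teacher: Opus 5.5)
Your proof is correct and follows the paper's route: the gradient identity from Proposition~\ref{prop:cond-exp-gradient}, a bound on the conditional covariance by a multiple of $R^2$, and differentiation of the decomposition of $b_t$ with the triangle inequality. The paper states the covariance bound as $4R^2$; your $R^2$ or $2R^2$ is sharper. One side remark: saying bounded support gives \eqref{assump-exp-tail} ``trivially'' also requires $\rho^\star$ to be bounded, but your measure-based differentiation argument makes that assumption unnecessary.
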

\begin{proof}
    The result is obtained by using $\nabla_x \mathbb{E}[x_\star|x_t = x] = \frac{\beta_t }{t \sigma^2_t} \mathrm{Cov}(x_\star|x_t=x)$ from Proposition~\ref{prop:cond-exp-gradient} and $\mathrm{Cov}(x_\star|x_t=x) \preceq 4R^2 \cdot \mathrm{I}$, which follows from the bounded support assumption.
\end{proof}
Since $\beta_t, \sigma_t \in C^1([0,1])$ with $\beta_0 = 0$ and $\sigma_t > 0$ for $t \in [0,1)$, the bound \eqref{eqn-gradient-bound-bounded-support} is uniform in time for $t \in [0, 1-\delta]$ for any $0 < \delta < 1$. Thus, the Lipschitz constant of $b_t$ is uniform in time on $[0, 1-\delta]$.

\subsection{Uniform-in-time Lipschitz bound} The local-in-time Lipschitz bound can be improved to a global-in-time bound if $\mu_\star$ is a Gaussian smoothing of a bounded support distribution.
\begin{proposition}
    Under Assumption~\ref{assum:bounded-support-smoothed-assumption}, we have
    \begin{equation}
    \label{eqn-gradient-bound-smoothed-bounded-support}
        |\nabla_x b_t(x)| \leq \left|\frac{\sigma_t\dot{\sigma}_t t + \beta_t \dot{\beta}_t \eta^2}{\sigma_t^2 t +\beta_t^2 \eta^2}\right| + 4R^2 \left|\frac{\beta_t\sigma_t t(\dot{\beta}_t\sigma_t - \beta_t \dot{\sigma}_t)}{(\sigma_t^2 t +\beta_t^2 \eta^2)^2}\right|, \quad t \in (0,1]\, .
    \end{equation}
\end{proposition}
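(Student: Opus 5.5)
Under Assumption~\ref{assum:bounded-support-smoothed-assumption}, write $x_\star = y + \eta\xi$ with $y\sim\nu$ supported in the ball of radius $R$, $\xi\sim\sfN(0,\mathrm{I})$, and $y,\xi,z$ independent. Then $x_t = \beta_t y + \beta_t\eta\xi + \sigma_t\sqrt{t}\,z$. The two Gaussian terms merge into a single Gaussian $\sqrt{v_t}\,w$ with variance $v_t = \beta_t^2\eta^2 + t\sigma_t^2$. The plan is to compute $\nabla_x b_t$ by conditioning on $y$ only, since $y$ has compact support. This gives a covariance bound whose prefactor stays bounded as $t\to 1$; that is why this bound is uniform in time, unlike the previous proposition.

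\emph{Step 1: rewrite the drift through $\E[y\mid x_t=x]$.} Let $\mathcal{G}=\sigma(y,x_t)$. Given $y$, the pair $(x_\star,x_t)$ is jointly Gaussian. Gaussian regression gives
\[
\E[x_\star\mid y,x_t] = y + \frac{\beta_t\eta^2}{v_t}(x_t-\beta_t y),
\]
and an analogous affine formula for $\E[\sqrt{t}\,z\mid y,x_t]$. Substitute these into $b_t(x)=\E[\dot\beta_t x_\star+\dot\sigma_t\sqrt t\,z\mid x_t=x]$ and apply the tower property. The result is
\[
b_t(x) = c_t\, x + e_t\,\E[y\mid x_t=x], \qquad c_t = \frac{\sigma_t\dot\sigma_t t + \beta_t\dot\beta_t\eta^2}{v_t}.
\]
Here $c_t$ is simply $\tfrac12 \frac{\rm d}{{\rm d}t}\log v_t$, which matches the first term of \eqref{eqn-gradient-bound-smoothed-bounded-support}. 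The coefficient $e_t$ is obtained by collecting the $y$-coefficients:
\[
e_t = \dot\beta_t - \frac{\beta_t(\sigma_t\dot\sigma_t t + \beta_t\dot\beta_t\eta^2)}{v_t} = \frac{\sigma_t t(\dot\beta_t\sigma_t - \beta_t\dot\sigma_t)}{v_t}.
\]
Verifying this is a short algebraic simplification.

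\emph{Step 2: gradient of the denoiser.} Repeat the computation of Proposition~\ref{prop:cond-exp-gradient} with $x_\star$ replaced by $y$ and the noise variance $t\sigma_t^2$ replaced by $v_t$. The conditional law of $y$ given $x_t=x$ is proportional to $\nu(\mathrm{d}y)\exp(-|x-\beta_t y|^2/(2v_t))$. Differentiating under the integral is legitimate because $\nu$ is compactly supported, so dominated convergence applies trivially and no tail assumption is needed. This yields
\[
\nabla_x\E[y\mid x_t=x]=\frac{\beta_t}{v_t}\mathrm{Cov}(y\mid x_t=x).
\]
Since $|y|\le R$, we have $\mathrm{Cov}(y\mid x_t=x)\preceq 4R^2\mathrm{I}$; the cruder constant $4R^2$ matches the statement.

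\emph{Step 3: combine.} Putting the two steps together,
\[
|\nabla b_t|\le |c_t| + 4R^2\frac{\beta_t}{v_t}|e_t|,
\]
which is exactly \eqref{eqn-gradient-bound-smoothed-bounded-support}. To get uniformity on $(0,1]$, observe that near $t=1$ we have $v_t\to\eta^2>0$, and near $t=0$ we have $v_t\sim t\sigma_0^2$. Since $\beta_t\sim\dot\beta_0 t$, the second term behaves like $O(t^2\cdot t)/t^2$, and $c_t\to\dot\sigma_0/\sigma_0$. Both terms are therefore bounded.

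The main obstacle is the algebra in Step 1: identifying $e_t$ in the stated closed form and making sure the Gaussian-conditioning identities remain valid at $t=1$, where $\sigma_1=0$. At $t=1$ the variance is $v_1=\eta^2>0$, so all formulas stay nondegenerate.
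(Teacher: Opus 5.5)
Your proposal is correct and follows essentially the paper's argument: the same splitting $x_\star = y+\eta\xi$ with merged noise variance $v_t$, the same identity $b_t(x)=c_t\,x+e_t\,\mathbb{E}[y\mid x_t=x]$, and the same bound $\frac{\beta_t}{v_t}\mathrm{Cov}(y\mid x_t=x)\preceq 4R^2\frac{\beta_t}{v_t}\mathrm{I}$ on the gradient of the denoiser. The only difference is that you get the identity from Gaussian regression conditional on $y$ plus the tower property, rather than from the paper's score/Tweedie computation, and this has the small advantage of never dividing by $\sigma_t$, so $t=1$ is covered directly; one minor slip is in your uniformity aside, where near $t=0$ the second term is $O(t^2)/t^2$ and tends to $\dot\beta_0^2/\sigma_0^2$ rather than being $O(t)$, but it is still bounded, so your conclusion stands.
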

\begin{proof}
    Recall $x_t  = \beta_t  x_\star + \sigma_t \sqrt{t}\,z$, $t\in[0,1]$, where $x_\star \sim \mu_\star$, $z\sim \sfN(0,\mathrm{I})$, $z\perp x_\star$. Since $\mu_\star=p*\sfN(0,\eta^2 \mathrm{I})$, we can write $x_\star = y_\star + \eta z_1$, where $y_\star \sim p$ has bounded support and $z_1 \sim \sfN(0,\mathrm{I})$ is independent, so that 
    \[x_t  = \beta_t  y_\star + (\sigma_t \sqrt{t}\,z + \beta_t \eta z_1) \overset{d}{=} \beta_t y_\star + \sqrt{\sigma_t^2 t +\beta_t^2 \eta^2}\, z := \tilde{x}_t\, . \]
    Let us now calculate $\mathbb{E}[x_\star|x_t = x]$. First, by definition, we have
\[\mathbb{E}[x_\star|x_t = x] = \mathbb{E}[y_\star|x_t = x] + \mathbb{E}[\eta z_1|x_t = x]\, .\]
Using the relation between score and conditional expectation, we have
\begin{equation}
    \nabla \log \rho_t(x) = -\frac{1}{\beta_t \eta }\mathbb{E}[z_1|x_t = x] = - \frac{1}{\sqrt{\sigma_t^2 t +\beta_t^2 \eta^2}}\mathbb{E}[z|\tilde{x}_t = x]\, ,
\end{equation}
where $\rho_t$ is the density of $x_t$ (or $\tilde{x}_t$). This shows that
\begin{equation}
    \mathbb{E}[\eta z_1|x_t = x] = \frac{\beta_t \eta^2}{\sqrt{\sigma_t^2 t +\beta_t^2 \eta^2}}\mathbb{E}\Big[\frac{\tilde{x}_t - \beta_t y_\star}{\sqrt{\sigma_t^2 t +\beta_t^2 \eta^2}}\;\Big|\;\tilde{x}_t = x\Big]\, .
\end{equation}
Using this formula, we get 
\begin{equation}
    \mathbb{E}[x_\star|x_t = x] = \frac{\sigma_t^2 t}{\sigma_t^2 t +\beta_t^2 \eta^2}\mathbb{E}[y_\star|\tilde{x}_t = x] + \frac{\beta_t\eta^2}{\sigma_t^2 t +\beta_t^2 \eta^2}x\, .
\end{equation}
Therefore, we get
\begin{equation}
\begin{aligned}
     b_t(x) &=  \frac{\dot{\sigma}_t}{\sigma_t}x + \Big(\dot{\beta}_t - \frac{\beta_t\dot{\sigma}_t}{\sigma_t}\Big)\mathbb{E}[x_\star|x_t = x]\\
     & = \frac{\dot{\sigma}_t}{\sigma_t}x + \Big(\dot{\beta}_t - \frac{\beta_t\dot{\sigma}_t}{\sigma_t}\Big)\left(\frac{\sigma_t^2 t}{\sigma_t^2 t +\beta_t^2 \eta^2}\mathbb{E}[y_\star|\tilde{x}_t = x] + \frac{\beta_t\eta^2}{\sigma_t^2 t +\beta_t^2 \eta^2}x\right)\\
     & = \frac{\sigma_t\dot{\sigma}_t t + \beta_t \dot{\beta}_t \eta^2}{\sigma_t^2 t +\beta_t^2 \eta^2} x + \frac{\sigma_t t(\dot{\beta}_t\sigma_t - \beta_t \dot{\sigma}_t)}{\sigma_t^2 t +\beta_t^2 \eta^2}\mathbb{E}[y_\star|\tilde{x}_t = x]\, .
\end{aligned}
\end{equation}
Now, we can use a similar argument as in the bounded support case to deduce that
\begin{equation}
    |\nabla_x\mathbb{E}[y_\star|\tilde{x}_t = x]|\leq 4R^2\frac{\beta_t}{\sigma_t^2t + \beta_t^2\eta^2}\, .
\end{equation}
Using this bound, we derive
\begin{equation}
    |\nabla_x b_t(x)| \leq \left|\frac{\sigma_t\dot{\sigma}_t t + \beta_t \dot{\beta}_t \eta^2}{\sigma_t^2 t +\beta_t^2 \eta^2}\right| + 4R^2 \left|\frac{\beta_t\sigma_t t(\dot{\beta}_t\sigma_t - \beta_t \dot{\sigma}_t)}{(\sigma_t^2 t +\beta_t^2 \eta^2)^2}\right|.
\end{equation}
The bound is finite for any $t \in (0,1]$, so we get a global-in-time Lipschitz bound.
\end{proof}
\section{Singularity Discussions on The F\"ollmer Drift at Initial Time}
\label{appendix-singular-drift-discussion}
This section discusses the singular behavior of the F\"ollmer drift at the initial time due to the choice $\dot\beta_0=0$. This supplements Remark \ref{remark-assumption-g-beta}.
\subsection{Discussions on the case $\dot \beta_0=0$} We first consider a specific example where $\sigma_t= 1-t$ and $\beta_t = t^2$. We set $g_t = g^\Fo_t=\sqrt{(1-t)(3-t)}$, the corresponding optimal diffusion coefficient, in~\eqref{eq:sde-tuning-g-approx}. The SDE admits the explicit formula
\begin{equation*}
    {\rm d} X^{g^\Fo}_t = \Big(1+\frac{1}{2-t}\Big)  b_t(X^{g^\Fo}_t){\rm d}t  - \frac{1}{t(2-t)}(2X^{g^\Fo}_t) {\rm d}t + \sqrt{(1-t)(3-t)} {\rm d}W_t.
\end{equation*}
The drift in this equation is singular at $t=0$ because of the term $-2X^{g^\Fo}_t/(t(2-t))$. Nevertheless, this term can be seen as an infinite restoring force, and the solution to this SDE is still well-defined for the initial condition $X^{g^\Fo}_{t=0} = 0$. In fact, by stochastic calculus, we can show that the solution satisfies the integral equation
\begin{equation*}
    X^{g^\Fo}_t = \frac{2-t}{t} \int_0^t \frac{u}{2-u} \left(\Big(1+\frac{1}{2-u}\Big)  b_u(X^{g^\Fo}_u)\right)  {\rm d}u +  \frac{2-t}{t} \int_0^t \frac{u\sqrt{(1-u)(3-u)}}{2-u}  {\rm d}W_u.
\end{equation*}
We present the existence and uniqueness of solutions to the above type of singular SDEs in Appendix \ref{appendix-singular-SDE}.

Beyond this specific example, for general $\dot{\beta}_0 = 0$, we have 
\begin{equation}
\begin{aligned}
    1+\tfrac12 \beta_t A_t (|g_t^{\rm F}|^2-\sigma_t^2) &= 1 + \frac{\beta_t}{2t\sigma_t(\dot\beta_t\sigma_t - \beta_t\dot\sigma_t)}\left(2t\sigma_t(\beta_t^{-1}\dot\beta_t \sigma_t- \dot\sigma_t) -\sigma_t^2\right)\\
    & = 2 - \frac{\sigma_t\beta_t}{t(\dot\beta_t\sigma_t - \beta_t\dot\sigma_t)}.
\end{aligned}
\end{equation}
Thus, the corresponding drift of the SDE has the form
\begin{equation}
\label{eqn-formula-btg}
    b_t^{g^\Fo}(x) = \left(2 - \frac{\sigma_t\beta_t}{t(\dot\beta_t\sigma_t - \beta_t\dot\sigma_t)}\right)b_t(x) - \frac{\dot\beta_t x}{t\sigma_t(\dot\beta_t\sigma_t - \beta_t\dot\sigma_t)}.
\end{equation}
Suppose $\beta_t = m t^k + o(t^k)$ for some $m > 0$ and $k > 1$ since $\dot\beta_0 = 0$. We get
\begin{equation}
    \lim_{t \to 0^+} 2 - \frac{\sigma_t\beta_t}{t(\dot\beta_t\sigma_t - \beta_t\dot\sigma_t)} = 2 - \lim_{t \to 0^+}\frac{\sigma_t t^k}{t(kt^{k-1}\sigma_t-t^k\dot\sigma_t)} = 2 - \frac{1}{k},
\end{equation}
which is finite. Moreover, if $\dot\sigma_1 \neq 0$, then
\begin{equation}
    \lim_{t \to 1^-} 2 - \frac{\sigma_t\beta_t}{t(\dot\beta_t\sigma_t - \beta_t\dot\sigma_t)} = 2 - \lim_{t \to 1^-}\frac{\sigma_t\beta_t}{(\dot\beta_t\sigma_t - \beta_t\dot\sigma_t)} = 2 - \frac{0}{0-\dot{\sigma}_1} = 2,
\end{equation}
which is finite as well. This means that in formula \eqref{eqn-formula-btg}, the coefficient of $b_t$ is uniformly bounded in time. For the linear term, we have 
\begin{equation}
\lim_{t \to 0^+}  - \frac{\dot\beta_t x}{t\sigma_t(\dot\beta_t\sigma_t - \beta_t\dot\sigma_t)} = - \infty,
\end{equation}
which also corresponds to a linear restoring force. The solution to the singular SDE with drift \eqref{eqn-formula-btg} is well-defined; see Appendix \ref{appendix-singular-SDE}. Therefore, the SDE \eqref{eq:sde-appenfix} with $g = g^{\Fo}$ is well-defined and can thus be used as a generative process.

\subsection{Solutions to SDEs with singular drifts}
\label{appendix-singular-SDE}
This section presents technical results regarding the existence and uniqueness of solutions to SDEs with singular drifts that appear when we optimize diffusion coefficients.

Consider the SDE
\begin{equation}
    {\rm d}X_s = -\frac{p(X_s-x_0)}{s} {\rm d}s + b_s(X_s) {\rm d}s + g_s{\rm d}W_s, \quad X_0 = x_0 \in \mathbb{R}^d,
\end{equation}
where $s \in [0,T]$ for some $T > 0$.
Here $p > 0$, $g: \mathbb{R} \to \mathbb{R}_+$. There is a singularity in the drift at $s=0$ so standard theories of SDEs do not apply directly. To deal with this, we apply It\^{o}'s calculus to obtain
\[ {\rm d}\left(s^p(X_s-x_0)\right) = s^p b(s,X_s){\rm d}s + s^p g_s{\rm d}W_s,  \]
from which we get the following stochastic integral equation
\begin{equation}
\label{eqn-integral-singular-init-time}
 X_s = x_0 + \frac{1}{s^p}\left( \int_0^s u^p b_u(X_u){\rm d}u + \int_0^s u^p g_u{\rm d}W_u  \right), \quad X_0 = x_0.
\end{equation}
We use the notation that $|\cdot|$ represents the Euclidean norm of vectors in any dimension. 
\begin{theorem}[Existence, Uniqueness, Bounds]
    Suppose $g\in C^1([0,T])$, and $b$ satisfies the Lipschitz and linear growth conditions
    \begin{itemize}
        \item $|b_s(x)-b_s(y)|\leq K|x-y|$
        \item $|b_s(x)|^2 \leq K^2(1+|x|^2)$
    \end{itemize}
    for any $s \in [0,T], x \in \mathbb{R}^d, y \in \mathbb{R}^d$ where $K$ is a positive constant. Let $(\Omega, \mathcal{F}, \mathbb{P})$ be the probability space and $\{\mathcal{F}_u, 0\leq u \leq s\}$ is the canonical filtration of the Wiener process $\{W_u, 0\leq u \leq s\}$.
    Then, there exists a continuous, adapted process $\{X_u, \mathcal{F}_u, 0\leq u \leq s\}$ which is a strong solution to \eqref{eqn-integral-singular-init-time}. That is, \eqref{eqn-integral-singular-init-time} holds almost surely for $X$ and $\mathbb{E}|X_u|^2 < \infty, 0\leq u \leq s$. Moreover, the strong solution is unique and there exists a constant $C$ that depends only on $K, T$ and $\|g\|_{L^{\infty}([0,T])}$ such that
    $\mathbb{E}[|X_s|^2] \leq C(1+|x_0|^2)\exp(Cs), s \in [0,T]$.
\end{theorem}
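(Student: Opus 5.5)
The singularity at $s=0$ is removed by working with the integral form \eqref{eqn-integral-singular-init-time} instead of the SDE, and treating it as a fixed-point problem. Without loss of generality $x_0=0$: replace $b_s(x)$ by $b_s(x+x_0)$, which has the same Lipschitz constant $K$ and linear growth with $|x_0|^2$ absorbed into the constant.

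The fixed-point map acts on continuous adapted processes with $\sup_{u\le s}\E|X_u|^2<\infty$ and is
\[
\Phi(X)_s = s^{-p}\int_0^s u^p b_u(X_u)\,{\rm d}u + s^{-p}\int_0^s u^p g_u\,{\rm d}W_u .
\]

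\emph{Stochastic term.} Set $M_s=s^{-p}\int_0^s u^p g_u\,{\rm d}W_u$. By It\^o's isometry,
\[
\E|M_s|^2 = d\,s^{-2p}\int_0^s u^{2p}g_u^2\,{\rm d}u \le d\|g\|_\infty^2\,\frac{s}{2p+1} .
\]
This is bounded and tends to $0$ as $s\to0$. Continuity of $M$ on $(0,T]$ is clear. At $s=0$, one integration by parts with $g\in C^1$ gives
\[
\int_0^s u^p g_u\,{\rm d}W_u = s^p g_s W_s - \int_0^s W_u\,{\rm d}(u^p g_u).
\]
Hence $|M_s|\lesssim \sup_{u\le s}|W_u|\to0$ almost surely, using that $s^{-p}\int_0^s |{\rm d}(u^pg_u)|$ stays bounded. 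So $M$ is continuous on $[0,T]$ with $M_0=0$.

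\emph{Drift term and contraction.} The weight $s^{-p}u^p\le 1$ for $u\le s$. Cauchy--Schwarz then gives
\[
\E\Big|s^{-p}\int_0^s u^p(b_u(X_u)-b_u(Y_u))\,{\rm d}u\Big|^2 \le K^2 s\int_0^s \E|X_u-Y_u|^2\,{\rm d}u .
\]
This is exactly the Volterra-type estimate used in the standard Picard iteration. The iterates therefore satisfy $\sup_{u\le s}\E|X^{n+1}_u-X^n_u|^2\le C^n s^n/n!$. The iterates converge in $L^2$ uniformly on $[0,T]$, and the limit solves \eqref{eqn-integral-singular-init-time}. The same inequality together with Gronwall's lemma gives uniqueness.

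\emph{Continuity and adaptedness.} Both are inherited from the iterates. The drift term $s^{-p}\int_0^s u^pb_u(X_u)\,{\rm d}u$ is continuous on $(0,T]$. At $0$ it is bounded by $\sup_{u\le s}|b_u(X_u)|\to |b_0(x_0)|\cdot 0$ after multiplying by the $O(s)$ factor, so it is continuous at $0$ as well. The almost-sure version of the limit is obtained via Borel--Cantelli on the summable increments.

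\emph{Moment bound.} By linear growth,
\[
\E|X_s|^2 \le 2K^2 s\int_0^s (1+\E|X_u|^2)\,{\rm d}u + \frac{2d\|g\|_\infty^2 s}{2p+1} .
\]
Gronwall's lemma then yields $\E|X_s|^2\le C(1+|x_0|^2)e^{Cs}$ once the shift by $x_0$ is undone. Here $C$ depends on $K$, $T$, $\|g\|_\infty$, and also on $d$ and $p$, which are fixed throughout.

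The main obstacle is justifying the behaviour at $s=0$, where the prefactor $s^{-p}$ blows up. Uniform $L^2$ estimates are not enough there, because continuity of paths at the singular time needs a pathwise argument. The integration-by-parts representation of $M_s$, which uses $g\in C^1$, is the key device for this.
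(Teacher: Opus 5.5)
Your proposal is correct and follows the paper's proof essentially step for step. Both use Picard iteration on the integral form, the bound $s^{-p}u^p\le 1$ giving the Volterra-type estimate $K^2 s\int_0^s$, and the integration-by-parts representation of $M_s$ (using $g\in C^1$) for continuity at $s=0$. Both then use Borel--Cantelli for pathwise convergence and Gronwall for uniqueness.

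The one step to state more carefully is Borel--Cantelli: it needs a summable bound on $\mathbb{E}\sup_{u\le T}|X^{n+1}_u-X^{n}_u|^2$, not on $\sup_{u}\mathbb{E}|X^{n+1}_u-X^{n}_u|^2$. This is immediate here because the difference of iterates contains no stochastic integral, so $\sup_{s\le T}|X^{n+1}_s-X^{n}_s|\le K\int_0^T|X^{n}_u-X^{n-1}_u|\,{\rm d}u$ holds pathwise, which is exactly the max-inside estimate the paper uses.
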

\begin{proof}
The idea of the proof is similar to the proof of \cite[Theorem 2.9]{karatzas2014brownian}, which handles SDEs with non-singular drift. We adapt the logic to the singular SDE here.

    We first show existence. Consider the Picard-Lindel{\" o}f iterations ($s \in [0,T]$):
    \begin{equation}
    \label{eqn-integral-singular-init-time-picard}
        X_s^{(0)} \equiv x_0; \quad X_s^{(n+1)} = x_0 + \frac{1}{s^p}\left( \int_0^s u^p b_u(X_u^{(n)}){\rm d}u + \int_0^s u^p g_u{\rm d}W_u  \right), \quad n\geq 0.
    \end{equation}
To begin, we have 
\begin{equation}
\begin{aligned}
     \mathbb{E}[|X_s^{(n+1)}|^2]&\leq 3|x_0|^2 + 3\left|\frac{1}{s^p} \int_0^s u^p b_u(X_u^{(n)}){\rm d}u \right|^2 + 3 \left|\frac{1}{s^p}\int_0^s u^p g_u{\rm d}W_u\right|^2
    \\
    & \leq 3|x_0|^2 + 3TK^2 \int_0^s (1+\mathbb{E}[|b_u(X_u^{(n)})|^2]) + 3T\|g\|^2_{L^{\infty}([0,T])}.
\end{aligned}
\end{equation}
We can write the above inequality in the following form
\begin{equation}
    \mathbb{E}[|X_s^{(n+1)}|^2] \leq C(1+|x_0|^2) + C \int_0^s \mathbb{E}[|b_u(X_u^{(n)})|^2] {\rm d}u, \quad s \in [0,T],
\end{equation}
where $C$ is a constant that depends only on $K, T$ and $\|g\|_{L^{\infty}([0,T])}$. Iterating the above inequality leads to 
\begin{equation}
\label{eqn-picard-iteration-L2-bound}
\mathbb{E}[|X_s^{(n+1)}|^2] \leq C(1+|x_0|^2)\left(1+Cs +\frac{(Cs)^2}{2} + \cdots + \frac{(Cs)^{n+1}}{(n+1)!} \right) \leq C(1+|x_0|^2)\exp(Cs).
\end{equation}
This shows that $X^{(n)}_s$ is square integrable for any $n \geq 0$ and $s \in [0,T]$. Moreover, if $X^{(n)}$ is a continuous, adapted process, we know that the process $X^{(n+1)}$ is continuous almost surely for $s \in (0,T]$ by the definitions of Lebesgue and stochastic integrations. It remains to examine the behavior of $X^{(n+1)}$ when $s \to 0$. Firstly, $\lim_{s\to 0} \frac{1}{s^p}\int_0^s u^p b_u(X_u^{(n)}){\rm d}u = 0$ almost surely since $X^{(n)}$ is a continuous stochastic process and $b$ is Lipschitz. Secondly, let $M_s :=\frac{1}{s^p} \int_0^s u^p g_u{\rm d}W_u$, $s \in (0,T]$, then using the formula of integration by parts in stochastic integration leads to
\[M_s = \frac{1}{s^p} \left(s^pg_sW_s - \int_0^s (\dot{g}_u u^p + pu^{p-1}g_u)W_u {\rm d}u\right), \quad \text{a.s.} \]
From the above, we get that $\lim_{s\to 0}M_s = 0$ almost surely. Therefore, we obtain $\lim_{s \to 0} X^{(n+1)}_s = x_0$, which means that $X^{(n+1)}$ is also a continuous, adapted process, assuming $X^{(n)}$ is the case. Using mathematical induction, we establish that $X^{(n)}$, for any $n \geq 0$, is a continuous, adapted process.

We consider the difference $X^{(n+1)}_s-X^{(n)}_s$, which satisfies
 \begin{equation}
        X_s^{(n+1)} - X_s^{(n)} =  \frac{1}{s^p}\left( \int_0^s u^p \left(b_u(X_u^{(n)})-b_u(X_u^{(n-1)})\right){\rm d}u \right).
    \end{equation}
Using the Lipschitz condition on $b$ and the fact $s \leq T$, we get
\begin{equation}
    \mathbb{E}[\max_{0\leq u \leq s}|X_s^{(n+1)} - X_s^{(n)}|^2] \leq TK^2 \int_0^T \mathbb{E}[|X_s^{(n)} - X_s^{(n-1)}|^2].
\end{equation}
Iterating the above inequality leads to
\begin{equation}
    \mathbb{E}[\max_{0\leq u \leq s}|X_s^{(n+1)} - X_s^{(n)}|^2] \leq C^\star \frac{(TK^2 s)^n}{n!}, \quad 0\leq s \leq T, n \geq 0,
\end{equation}
where $C^\star := \max_{0 \leq s \leq T} \mathbb{E}|X^{(1)}_s-x_0|^2$, which is finite. Then, using Markov's inequality gives
\begin{equation}
    \mathbb{P}(\max_{0\leq u \leq s}|X_s^{(n+1)} - X_s^{(n)}| > \frac{1}{2^{n+1}}) \leq 4C^\star \frac{(4TK^2 s)^n}{n!}, \quad n \geq 0.
\end{equation}
The above upper bound leads to a convergent series in $n$. Thus, from the Borel--Cantelli lemma, we conclude that there exists an event $\Omega^\star$ with $\mathbb{P}(\Omega^\star)=1$ and an integer-valued random variable $N(\omega)$, such that for all $\omega \in \Omega^\star$, it holds that
\begin{equation}
    \max_{0\leq u \leq s}|X_s^{(n+1)} - X_s^{(n)}| \leq \frac{1}{2^{n+1}}, \quad \forall n \geq N(\omega).
\end{equation}
Therefore,
\begin{equation}
    \max_{0\leq u \leq s}|X_s^{(n+m)} - X_s^{(n)}| \leq \frac{1}{2^{n}}, \quad \forall n \geq N(\omega), m \geq 1.
\end{equation}
The above fact implies that the continuous sample paths $X_s^{(n)}(\omega)$, $0\leq s \leq T$, converge, as $n\to \infty$ and in the supremum norm on continuous functions, to a limit $X_s(\omega)$, $0\leq s \leq T$, for all $\omega \in \Omega^\star$. Taking $n \to \infty$ in \eqref{eqn-integral-singular-init-time-picard}, we obtain that $X_s$ satisfies the stochastic integral equation almost surely. The bound $\mathbb{E}[|X_s|^2] \leq C(1+|x_0|^2)\exp(Cs)$, $s \in [0,T]$, is obtained by applying Fatou's lemma to \eqref{eqn-picard-iteration-L2-bound}. The proof of existence is complete.

Now we show uniqueness. Suppose both $X_s$ and $\tilde{X}_s$ satisfy the stochastic integral equation. We have
\begin{equation}
    X_s - \tilde{X}_s = \frac{1}{s^p}\left( \int_0^s u^p \left(b_u(X_u)-b_u(\tilde{X}_u)\right){\rm d}u\right).
\end{equation}
Squaring both sides and taking expectations leads to
\begin{equation}
    \mathbb{E}[|X_s - \tilde{X}_s|^2] \leq sK^2 \int_0^s \mathbb{E}[|X_u-\tilde{X}_u|^2] {\rm d}u.
\end{equation}
Applying Gr{\"o}nwall's inequality gives $X_s = \tilde{X}_s$ almost surely, which shows uniqueness.
\end{proof}
\section{Characterization of F\"ollmer Processes}
This section presents technical results about F\"ollmer processes, in particular the proof of Proposition~\ref{prop-Follmer-SDE} regarding the expression of F\"ollmer's drift as a conditional expectation, and a Schr\"odinger bridge and stochastic control perspective on the formula of F\"ollmer's drift. 
\subsection{F\"ollmer drifts as conditional expectations}
\label{appendix-sec:Follmer drifts and conditional expectations}
\begin{proof}[Proof of Proposition~\ref{prop-Follmer-SDE}]
We show the results formally following the steps below.

\textbf{Step 1.} The explicit solution to \eqref{eq:ref:foll-gen} is 
\[Y_t = \int_0^t \frac{r_t}{r_u}g_u \,{\rm d}W_u\, .\]
Thus $\E[Y_t] = 0$ and $\mathrm{Cov}(Y_t, Y_u) = h_{\min(t,u)} \eps\, \mathrm{I}$. Also, the density of $Y_t$, denoted by $\rho^{Y}_t$, is equal to the density of $\sfN(0, h_t \eps \,\mathrm{I})$.

\textbf{Step 2.} Since $Y_t$ is a Gaussian process, by Gaussian conditioning on endpoints and calculating the conditional mean and variance, we obtain that 
the reference process $Y_t$ has the same law at each $t$ as the interpolant process
\[y_t = h_t Y_1 + \sqrt{\eps h_t(1-h_t)}\,z, \]
with $z \sim \sfN(0,\mathrm{I})$ and $z \perp Y_1$. 

\textbf{Step 3.} We time-reverse the reference process and obtain $Y_t^{\rm R}\overset{d}{=}Y_{1-t}$, which satisfies the SDE
\begin{equation}
\begin{aligned}
    {\rm d}Y^{\rm R}_t &= -a_{1-t} Y^{\rm R}_t \,{\rm d}t + g_{1-t}^2\nabla \log \rho^{Y}_{1-t}(Y^{\rm R}_t)\,{\rm d}t  + g_{1-t}\,{\rm d}W_t\\
    & = -a_{1-t} Y^{\rm R}_t \,{\rm d}t - \frac{g_{1-t}^2}{\eps h_{1-t}}Y^{\rm R}_t\,{\rm d}t  + g_{1-t}\,{\rm d}W_t,\quad Y_0^{\rm R} \sim \sfN(0, h_1 \eps \,\mathrm{I}),
\end{aligned}
\end{equation}
where we used the fact that $\nabla \log \rho^{Y}_{1-t}(y) = - \frac{1}{\eps h_{1-t}}y$, since $\rho_{1-t}^Y$ is the density of $\sfN(0, h_{1-t} \eps\, \mathrm{I})$.

\textbf{Step 4.} To construct the F\"ollmer process, we change the initial condition $Y^{\rm R}_0$ to make it distributed according to the target $\mu_\star$. More precisely, denote the resulting process by $X^{\rm R}$, which satisfies
\begin{equation}
\begin{aligned}
    {\rm d}X^{\rm R}_t  = -a_{1-t} X^{\rm R}_t \,{\rm d}t - \frac{g_{1-t}^2}{\eps h_{1-t}}X^{\rm R}_t\,{\rm d}t  + g_{1-t}\,{\rm d}W_t,\quad X_0^{\rm R} \sim \mu_\star.
\end{aligned}
\end{equation}
The corresponding F\"ollmer process can be obtained by reversing the above SDE, namely $X^\Fo_t\overset{d}{=} X^{\rm R}_{1-t}$, which satisfies
\begin{equation}
    {\rm d}X_t^{\rm F} = a_t X^{\rm F}_t \,{\rm d}t  +\frac{g_t^2}{\eps h_t}X_t^{\rm F}\, {\rm d}t + g_t^2 \nabla \log \rho_t(X_t^{\rm F})\, {\rm d}t +  g_t\,{\rm d}W_t,\quad X_0^{\rm F} = 0,
\end{equation}
where $\rho_t$ is the density of $X^\Fo_{t}\overset{d}{=} X^{\rm R}_{1-t}$. Let us also write the above SDE as ${\rm d}X_t^{\rm F} = b^{\rm F}_t(X_t^{\rm F}) \,{\rm d}t + g_t \,{\rm d}W_t$.

Note that, by construction, $X_t^{\rm F}$ has the same law at each $t$ as the interpolant
\[x_t = h_t x_\star + \sqrt{\eps h_t(1-h_t)}\,z,\]
where $x_\star \sim \mu_\star$, $z \sim \sfN(0,\mathrm{I})$, and $z \perp x_\star$, since $X^{\Fo}$ and $Y$ share the same bridge process.

\textbf{Step 5.} The F\"ollmer drift above can be written as a conditional expectation, based on the fact that the score function can be written as a conditional expectation. More precisely, the score function of $X_t^{\rm F}$ is the same as that of $x_t$. By Tweedie's formula \cite{efron2011tweedie}, we have
\[\nabla \log \rho_t(x) = \E\left[-\frac{1}{\sqrt{\eps h_t(1-h_t)}}\,z\;\Big|\;x_t=x\right]. \]
Plugging this identity into the formula of the F\"ollmer drift, we get
\[b^{\rm F}_t(x) = a_t x + g_t^2\,\E\left[\frac{1}{\eps}x_\star-\sqrt{\frac{h_t}{\eps(1-h_t)}}\,z\;\Big|\;x_t = x\right].\]

\textbf{Step 6.} Since $\frac{g_t^2}{\eps h_t}X_t^{\rm F} = -g_t^2\nabla \log \rho^Y_t(X^{\rm F}_t)$, we can also write the F\"ollmer process in the following way:
\[{\rm d}X_t^{\rm F} = a_t X^{\rm F}_t \,{\rm d}t + g_t^2 \nabla \log \frac{\rho_t(X_t^{\rm F})}{\rho^Y_t(X^{\rm F}_t)} \,{\rm d}t +  g_t\,{\rm d}W_t, \quad X_0^{\rm F} = 0, \]
where $\rho_t$ is the density of $X^\Fo_t$ and $\rho^Y_t$ is the density of the reference process $Y_t$.

\textbf{Step 7.} We can show that 
\[\log \frac{\rho_t(x)}{\rho^Y_t(x)} = \log P^Y_{1-t}f(x)\, , \]
where $P_t^Y f(x) = \mathbb{E}[f(Y_1)|Y_{1-t}=x]$ and $f(x) = \frac{{\rm d}\mu_\star}{{\rm d}\mathrm{Law}(Y_1)}(x)$. 
One way to show this is to use the fact that $\log \frac{\rho_t(x)}{\rho^Y_t(x)}$ solves a Hamilton--Jacobi--Bellman equation. More precisely, we can write down the following two Fokker--Planck equations:
\begin{equation}
    \begin{aligned}
        &\partial_t \rho^Y_t(x) + \nabla \cdot \left(\rho^Y_t(x) v_t(x)\right) = \frac{1}{2}g_t^2 \Delta \rho^Y_t(x)\\
        &\partial_t \rho_t(x) + \nabla \cdot \left(\rho_t(x) \left(v_t(x)+g_t^2 \nabla \log \frac{\rho_t(x)}{\rho^Y_t(x)}\right)\right) = \frac{1}{2}g_t^2 \Delta \rho_t(x),
    \end{aligned}
\end{equation}
where $v_t(x) = a_t x$. From these two equations, we obtain
\[ \partial_t \log \frac{\rho_t(x)}{\rho^Y_t(x)} + \nabla \log\frac{\rho_t(x)}{\rho^Y_t(x)} \cdot v_t(x)  +\frac{1}{2}g_t^2\left|\nabla \log\frac{\rho_t(x)}{\rho^Y_t(x)}\right|^2 + \frac{1}{2}g_t^2\Delta \log \frac{\rho_t(x)}{\rho^Y_t(x)} = 0, \]
which is a Hamilton--Jacobi--Bellman equation. We have an explicit solution for this equation as follows:
\[\log \frac{\rho_t}{\rho^Y_t} = \log \mathbb{E}\left[\frac{{\rm d}\mu_\star}{{\rm d}\mathrm{Law}(Y_1)}(Y_1)\;\Big|\;Y_t = x\right] =  \log P^Y_{1-t}f. \]
Therefore, the F\"ollmer process can be written as 
\[{\rm d}X_t^{\rm F} = a_t X^{\rm F}_t \,{\rm d}t + g_t^2 \nabla \log P^Y_{1-t}f(X_t^{\rm F})\, {\rm d}t +  g_t\,{\rm d}W_t, \quad X_0^{\rm F} = 0\, . \]
This formula, involving $\nabla \log P^Y_{1-t}f(X_t^{\rm F})$, is commonly used in the literature as the F\"ollmer drift when the reference process is the Wiener process. We note this formula can also be directly derived using the perspective of Schr\"odinger's bridge and PDEs. For completeness, we present the derivation in Appendix~\ref{appendix-Schrodinger's bridges}.
\end{proof}

\subsection{A perspective from Schr\"odinger bridges}
\label{appendix-Schrodinger's bridges}
We provide a formal derivation of the F\"ollmer drift through the perspective of Schr\"odinger bridges.

The goal in the Schr\"odinger bridge problem is to find a drift $u$ such that the KL divergence between the path measures of the process $X$ and a given reference $Y$ is minimized among all processes with marginal distributions $\mu$ and $\nu$ at $t=0$ and $t=1$, respectively~\cite{leonard2014survey,chen2021stochastic}. 

Let us denote the path measures of the processes $X=(X_t)_{t\in [0,1]}$ and $Y=(Y_t)_{t\in [0,1]}$, which are both in $C([0,1], \mathbb{R}^d)$, as $\P_X$ and $\P_Y$, respectively. Consider $Y$ to be the solution to the following linear SDE:
\begin{equation}
    \label{eq:ref:foll-gen-append}
    {\rm d}Y_t = a_t Y_t \,{\rm d}t + g_t\,{\rm d}W_t, \quad Y_{t=0} = 0,
\end{equation}
and ${\rm d}X_t = a_t X_t \,{\rm d}t + u_t(X_t)\,{\rm d}t + g_t\,{\rm d}W_t$ where $u_t$ is to be determined. By Girsanov's theorem, the KL divergence of $\mathbb{P}_X$ from $\mathbb{P}_Y$ is given by
\begin{equation}
    \label{eq:KL:path:1}
    \mathrm{KL}[\P_X\Vert \P_Y] = \frac12\int_0^1 |g_t|^{-2} \mathbb{E}[|u_t(X_t)|^2]\, {\rm d}t.
\end{equation}
Therefore, minimizing this KL divergence subject to the constraints can be phrased as the following stochastic control problem:
\begin{equation}
\label{eq:KL:min:sb}
    \begin{aligned}
        \min_{u} \quad &\frac12\int_0^1 |g_t|^{-2} \mathbb{E}[|u_t(X_t)|^2]\, {\rm d}t, \\
        \text{s.t.} \quad  &{\rm d}X_t = a_t X_t \,{\rm d}t + u_t(X_t)\,{\rm d}t + g_t\,{\rm d}W_t, \quad X_0 \sim \mu,\quad X_1 \sim \nu.
    \end{aligned}
\end{equation}
The F\"ollmer process we consider is a particular instantiation of~\eqref{eq:KL:min:sb} when $\mu = \delta_{0}$ and $\nu = \mu_\star$.

If we denote by $\rho_t(x)$ the density of $X_t$, and write $v_t(x) = a_t x$, then the minimization problem in~\eqref{eq:KL:min:sb} can be formulated as
\begin{equation}
\label{eq:KL:min:sb:2}
    \begin{aligned}
        \min_{\rho}\quad &\frac12\int_0^1 |g_t|^{-2} \int_{\R^d}|u_t(x)|^2 \rho_t(x)\, {\rm d}x\, {\rm d}t, \\
        \text{s.t.} \quad & \partial_t \rho_t+ \nabla \cdot((u_t+v_t)\rho_t) = \tfrac12 g^2_t \Delta \rho_t, \quad \rho_0(x) = \delta_0(x),\quad \rho_1 = \mu_\star.
    \end{aligned}
\end{equation}
By Lagrange duality, the optimality condition for~\eqref{eq:KL:min:sb:2} is that there exists a function $\lambda_t(x)$ such that $u_t(x)=g_t^2\nabla \lambda_t(x)$, and the pair $(\rho,\lambda)$ solves the coupled system of Fokker--Planck and Hamilton--Jacobi--Bellman equations:
\begin{equation}
\label{eq:el:1}
\begin{cases}
    \begin{aligned}
        &\partial_t \rho_t +  \nabla\cdot ((g^2_t\nabla \lambda_t+v_t) \rho_t ) = \tfrac{1}{2}g^2_t\Delta \rho_t\\
        &\partial_t\lambda_t + \tfrac{1}{2}g^2_t|\nabla \lambda_t|^2 + \nabla \lambda_t \cdot v_t = -\tfrac{1}{2}g^2_t\Delta \lambda_t,
    \end{aligned}
    \end{cases}
\end{equation}
to be solved with the boundary conditions $\rho_0(x) = \delta_0(x)$ and $\rho_1 = \mu_\star$.

Through the transformation 
\begin{equation}
    \label{eq:transf}
     \begin{aligned}
    &\Phi_t(x) = \exp(\lambda_t(x)), \quad &&\hat{\Phi}_t(x) = \rho_t(x) \exp(-\lambda_t(x)) \\
    \Leftrightarrow \quad
    &\lambda_t(x) = \log  \Phi_t(x), \quad &&\rho_t(x) = \Phi_t(x) \hat{\Phi}_t(x)
    \end{aligned}
\end{equation}
we can turn \eqref{eq:el:1} into
\begin{equation}
\label{eq:el:2}
\begin{cases}
    \begin{aligned}
        & \partial_t \Phi_t + \nabla \Phi_t \cdot v_t = -\tfrac{1}{2}g^2_t\Delta \Phi_t\\
        & \partial_t \hat{\Phi}_t + \nabla \cdot (\hat{\Phi}_t v_t) = \tfrac{1}{2}g^2_t\Delta \hat{\Phi}_t.
    \end{aligned}
\end{cases}
\end{equation}
to be solved with the boundary conditions $\Phi_0(x)\hat{\Phi}_0(x) = \delta_0(x)$ and $\Phi_1(x)\hat{\Phi}_1(x)= \mu_\star(x)$. 

In general, one does not have an explicit formula for $\Phi_t(x)$ and $\hat{\Phi}_t(x)$. However, \eqref{eq:el:2} can be solved for the particular boundary conditions of the F\"ollmer process.

To see how, let us set $\hat{\Phi}_0(x) = \delta_0(x)$. Since $v_t(x) = a_t x$, we get that $\hat{\Phi}_t(\cdot)$ is the density of 
\begin{equation}
    \label{eq:ref:foll-appendix}
    {\rm d}Y_t = a_t Y_t \,{\rm d}t + g_t\,{\rm d}W_t, \quad Y_{t=0} = 0 .
\end{equation}
Now, to satisfy the terminal condition at $t=1$, we must set $\Phi_1(x) = \mu_\star(x)/\hat{\Phi}_1(x)$. To conform with standard notation, let us denote this density ratio as $f$, i.e.,
\begin{equation}
    \label{eq:f:def}
    f(x) \equiv \Phi_1(x) = \frac{{\rm d}\mu_\star}{{\rm d}\mathrm{Law}(Y_1)}(x) .
\end{equation}
Using the Feynman--Kac formula, we can express the solution of the equation for $\Phi$ in \eqref{eq:el:2} for this terminal condition as
\begin{equation}
    \label{eq:phi:h:s}
    {\Phi}_t(x) = \mathbb{E}[f(Y_1)|Y_{t}=x] .
\end{equation} 
Furthermore, we can verify that $\Phi_0(0) = \mathbb{E}[f(Y_1)] = 1$ since $f$ is the density ratio. Therefore, the condition $\Phi_0(x)\hat{\Phi}_0(x) = \delta_0(x)$ is satisfied. Thus, the constructed $\hat{\Phi}_t$ and $\Phi_t$ are indeed the solutions to~\eqref{eq:el:2}; similar derivations can be found in \cite{wang2021deep}.

Coming back to the minimization problem~\eqref{eq:KL:min:sb:2}, the derivation above shows that the optimal drift is $b^\Fo_t(x) = a_t x + g^2_t \nabla \log \Phi_t(x)$, with $\Phi$ given in~\eqref{eq:phi:h:s}. It is common to introduce the notation $P_t^Y f(x) = \mathbb{E}[f(Y_1)|Y_{1-t}=x]$. With this notation, the SDE associated with the optimal drift can be written as
\begin{equation}
\label{eq:sde:foll:2}
{\rm d}X^\Fo_t = a_t X_t^\Fo \,{\rm d}t + g^2_t\nabla \log P_{1-t}^Y f(X_t^{\rm F})\,  {\rm d}t + g_t\,{\rm d}W_t, \quad X_{t=0}^{\Fo} = 0 .
\end{equation}
The solution to this SDE is the F\"ollmer process, and by construction its solutions satisfy $X_{t=1}^{\Fo} \sim \mu_\star$.

\section{Singularity Discussions on The Baseline Drift at Initial Time}
\label{sec:singular behaviors of baseline SDEs at initial time}
This section discusses the regularity of the drift in the baseline diffusion under more general assumptions beyond Assumption \ref{assum:bounded-support-smoothed-assumption}, and identifies a special condition $\dot\beta_0 = 0$.

Recall that under Assumption \ref{assum:bounded-support-smoothed-assumption}, the drift $b_t(x)$ is spatially Lipschitz, uniformly in time. In particular $b_t(x)$ is bounded and $\lim_{t \to 0^+} b_t(x)$ is well-defined at $t=0$. However, this is not true for general target distributions. To see this, we present two examples below.
Recall the setting $x_t  = \beta_t  x_\star + \sigma_t \sqrt{t}z, t\in[0,1]$, where $x_\star \sim \mu_\star$, $z\sim \mathsf {\sf N}(0,\mathrm{I}), z\perp x_\star$. Moreover, $b_t(x) =  \dot\beta_t  \mathbb{E}[x_\star|x_t = x] + \dot \sigma_t   \mathbb{E}[\sqrt{t}z|x_t = x].$ We denote $\rho^\star$ the density of $\mu_\star$ regarding the Lebesgue measure.
\begin{example}
\label{prop: dot beta for Gaussian}
    Suppose $\rho^\star$ is a Gaussian density ${\sf N}(m,C)$.   Then for any $x \in \mathbb{R}^d$, it holds that
    \begin{itemize}
        \item $\lim_{t\to 0}\mathbb{E}[x_\star|x_t = x] = \frac{\dot{\beta}_0}{\sigma^2_0}Cx + m $;
        \item $\lim_{t\to 0}\mathbb{E}[\sqrt{t}z|x_t = x] = \frac{1}{\sigma_0}x$;
        \item $\lim_{t\to 0} b_t(x) = \dot{\beta}_0 m + \big(\frac{\dot{\beta}^2_0}{\sigma^2_0}C + \frac{\dot{\sigma}_0}{\sigma_0}\mathrm{I}\big)x$.
    \end{itemize}
\end{example}
\begin{example}
\label{prop: dot beta for exponential}
    Let $d=1$ and $\rho^\star(x_\star) \propto \exp(-|x_\star|)$. Assume $\dot{\beta}_0 > 0$. Let $c = \frac{\sigma^2_0}{\dot{\beta}_0} > 0$. Then, for $|x|>c$, it holds that
    \begin{itemize}
        \item $\lim_{t\to 0} \beta_t \mathbb{E}[x_\star|x_t = x] = (1-\frac{c}{|x|})x$;
        \item $\lim_{t\to 0} \mathbb{E}[\sqrt{t}z|x_t = x] = \frac{c}{\sigma_0} \frac{x}{|x|}$;
        \item $\lim_{t\to 0} b_t(x) = \infty$.
    \end{itemize}
\end{example}
The first example of Gaussian distributions leads to a well defined limit at $t=0$. This is not surprising since Gaussian distributions are covered by Assumption \ref{assum:bounded-support-smoothed-assumption}. However, for exponential distributions, if $\dot\beta_0 >0$, then the drift $b_t(x)$ blows up when $t\to 0$. We also note that the singularity of the drift here may not correspond to a negative restoring force (as in the F\"ollmer drift discussed in Appendix \ref{appendix-singular-drift-discussion}), since
\begin{equation}
    \lim_{t \to 0} b_t(x) = \lim_{t \to 0} (\dot\beta_t  \mathbb{E}[x_\star|x_t = x] + \dot \sigma_t   \mathbb{E}[\sqrt{t}z|x_t = x]) = \lim_{t\to 0}\frac{\dot\beta_t}{\beta_t}(1-\frac{c}{|x|})x + \dot \sigma_0  \frac{c}{\sigma_0} \frac{x}{|x|} = +\infty
\end{equation}
for $|x|>c$. 
The above two examples indicate that the tail behaviors of the target distribution matter in determining the behavior of $b_t(x)$ when $t\to 0$. 
\begin{proof}[Proof of Example~\ref{prop: dot beta for Gaussian}]
    Note that if two random vectors $X, Y$ are joint Gaussians, then there is an explicit formula for the conditional expectation \[\E[X|Y] = \mathrm{Cov}(X,Y)\mathrm{Cov}(Y,Y)^{-1}(Y-\E[Y])+\E[X].\] Using this formula, we can calculate the following:
\begin{equation}
    \begin{aligned}
        &\E[x_\star|x_t = x] = \beta_t C\big(\beta^2_tC+\sigma^2_t t \,\mathrm{I}\big)^{-1}\big(x -\beta_t m\big)+m,\\
        &\E[z|x_t = x]= \sqrt{t}\,\sigma_t \big(\beta^2_tC+\sigma^2_t t \,\mathrm{I}\big)^{-1}\big(x-\beta_t m\big) .
    \end{aligned}
\end{equation}
As $\beta_t $ is differentiable, it holds that $\beta_t =\dot{\beta}_0 t+o(t)$ as $t \to 0$. Therefore, we have the following limits as $t\to 0$:
\begin{equation}
    \begin{aligned}
        &\lim_{t\to 0}\E[x_\star|x_t = x] = \frac{\dot{\beta}_0}{\sigma^2_0}Cx+m,\\
        &\lim_{t\to 0}\E[\sqrt{t}\,z|x_t = x]= \frac{1}{\sigma_0}x .
    \end{aligned}
\end{equation}
Combining them with the formula for $b_t(x)$ yields
\begin{equation}
    \lim_{t\to 0} b_t(x) = \dot{\beta}_0 m + \Big(\frac{\dot{\beta}^2_0}{\sigma^2_0}C + \frac{\dot{\sigma}_0}{\sigma_0}\mathrm{I} \Big)x .
\end{equation}
\end{proof}

\begin{proof}[Proof of Example~\ref{prop: dot beta for exponential}]
    To show this example, we state a lemma:
    \begin{lemma}
    \label{lemma-exponential}
        Let $X$ be a random variable in $\mathbb{R}$ with probability density $\rho(x) \propto \exp(-|x|)$. Let $Z \sim \sfN(0,1)$ be independent of $X$, and let $p_t, q_t$ be non-negative continuous functions satisfying $p_0=q_0=0$ and positive on $(0,\epsilon)$ for some $\epsilon > 0$. Consider any continuous function $y_t$ such that $\lim_{t\to 0} y_t = y_0$. If $\lim_{t \to 0} \frac{q^2_t}{p_t} = c <\infty$, then for $|y_0| > c$, $\lim_{t \to 0} p_t \E[X|p_t X + q_t Z = y_t] = (1-\frac{c}{|y_0|})y_0$.
    \end{lemma}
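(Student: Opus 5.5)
We work directly with the explicit conditional density of $X$ given $p_tX+q_tZ=y_t$. Up to normalization, this density in $x$ is proportional to
\[
\exp\Bigl(-|x| - \frac{(y_t-p_t x)^2}{2q_t^2}\Bigr).
\]
We rescale by setting $u = p_t x$, so that $p_t\,\E[X\mid p_tX+q_tZ=y_t]$ becomes the mean of $u$ under the density
\[
\pi_t(u)\propto \exp\Bigl(-\frac{|u|}{p_t} - \frac{(u-y_t)^2}{2q_t^2}\Bigr).
\]
Writing $\epsilon_t = q_t^2/p_t\to c$ and factoring out $1/q_t^2$, the exponent equals
\[
-\frac{1}{q_t^2}\Bigl(\epsilon_t |u| + \tfrac12 (u-y_t)^2\Bigr).
\]
Since $q_t\to0$, this is a Laplace-type concentration problem with large parameter $1/q_t^2$ and a convex phase $F_t(u)=\epsilon_t|u|+\frac12(u-y_t)^2$.

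\textbf{Step 1 (identify the minimizer).} Assume $y_0>c$; the case $y_0<-c$ follows by symmetry. For $t$ small we have $y_t>\epsilon_t$, and the unique minimizer of $F_t$ is the soft-thresholding point $u_t^*=y_t-\epsilon_t>0$. This converges to $y_0-c=(1-c/|y_0|)y_0$, which is the claimed limit.

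\textbf{Step 2 (concentration).} The function $F_t$ is $1$-strongly convex, so $F_t(u)-F_t(u_t^*)\ge \tfrac12(u-u_t^*)^2$. Hence $\pi_t$ is dominated by a Gaussian-type bound:
\[
\pi_t(u)\le \frac{e^{-(u-u_t^*)^2/(2q_t^2)}}{Z_t},\qquad Z_t=\int e^{-(F_t(u)-F_t(u_t^*))/q_t^2}\,{\rm d}u.
\]
For a lower bound on $Z_t$, note that near $u_t^*$ (which stays away from $0$, where the kink sits) $F_t$ is exactly the quadratic $\frac12(u-u_t^*)^2+\mathrm{const}$. Integrating over a fixed neighborhood of $u_t^*$ gives $Z_t\gtrsim q_t\sqrt{2\pi}$. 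Combining the two bounds yields
\[
\int |u-u_t^*|\,\pi_t(u)\,{\rm d}u = O(q_t)\to0.
\]

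\textbf{Step 3 (conclude).} The mean under $\pi_t$ is therefore $u_t^*+o(1)\to y_0-c$.

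The main point requiring care is the uniformity in Step 2. We must ensure that $u_t^*$ stays bounded away from $0$, which uses $|y_0|>c$ strictly together with the continuity of $y_t$ and $\epsilon_t$. This is also precisely why the lemma fails, or gives a different answer, when $|y_0|\le c$: in that regime the minimizer sits at the kink $u=0$. All the remaining steps are elementary Gaussian tail estimates.
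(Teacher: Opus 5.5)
Your proof is correct, and it takes a genuinely different route from the paper's.

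\textbf{The paper's route.} The paper evaluates the conditional expectation in closed form. It splits the integral at $x=0$ and completes the square on each half-line. It then expresses numerator and denominator through the standard normal CDF; the boundary terms from the two half-lines cancel. Finally it uses a Mills-ratio bound on the negative axis to show that the half-line not containing $y_0$ contributes negligibly, leaving $y_t-q_t^2/p_t$.

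\textbf{Your route.} You rescale to $u=p_tx$ and write the exponent as $-q_t^{-2}F_t(u)$, with the $1$-strongly convex phase $F_t(u)=\epsilon_t|u|+\tfrac12(u-y_t)^2$. You then run a Laplace-type concentration argument around its soft-thresholding minimizer. Every step checks out. Step 2 can even be streamlined: $F_t(u)-F_t(u_t^*)=\tfrac12(u-u_t^*)^2$ holds exactly on all of $(0,\infty)$. Hence $Z_t\ge\int_0^\infty e^{-(u-u_t^*)^2/(2q_t^2)}\,{\rm d}u\ge\tfrac12 q_t\sqrt{2\pi}$ as soon as $y_t>\epsilon_t$. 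This gives $\bigl|p_t\,\mathbb{E}[X\mid p_tX+q_tZ=y_t]-(y_t-\epsilon_t)\bigr|\le 4q_t/\sqrt{2\pi}$ with no neighborhood argument needed.

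\textbf{What each approach buys.} The paper's exact formula buys sharpness. Subtracting $y_t-\epsilon_t$ from it shows the error equals exactly $2\epsilon_t r_t/(1+r_t)$, where $r_t$ is the ratio the paper bounds, namely $r_t=O\bigl(q_t e^{-(y_t-\epsilon_t)^2/(2q_t^2)}\bigr)$. So the error is exponentially small in $q_t^{-2}$, far better than your $O(q_t)$; the price is algebra tied to the one-dimensional Laplace density.

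Your argument buys structure and portability. The limit $(1-c/|y_0|)y_0$ is the proximal map of $c|\cdot|$ at $y_0$. The hypothesis $|y_0|>c$ is exactly the condition that this minimizer leaves the kink; a variant using the linear growth of $F_t$ away from $0$ gives the limit $0$ when $|y_0|<c$. The same concentration argument also carries over with minor changes to $\mathbb{R}^d$ with $\rho\propto e^{-|x|}$, where $(1-c/|y_0|)y_0$ is precisely group soft-thresholding.
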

    We now prove the example by employing this lemma; the proof of the lemma is provided subsequently.
    
    By definition, $\E[x_\star|x_t = x] = \E[x_\star | \beta_t x_\star+\sqrt{t}\,\sigma_t z = x]$. Using the lemma with $p_t = \beta_t$, $q_t = \sqrt{t}\,\sigma_t$, $y_t \equiv x$ leads to \[\lim_{t\to 0} \beta_t \E[x_\star|x_t = x] = \Big(1-\frac{c}{|x|}\Big)x\, , \]
    where $c := \lim_{t \to 0}\frac{q^2_t}{p_t} = \lim_{t \to 0} \frac{t\sigma^2_t}{\beta_t} = \frac{\sigma^2_0}{\dot{\beta}_0}$. The above formula holds for $|x|>c$.
    Furthermore, using the relation 
    \[\mathbb{E}[\sqrt{t}\,z|x_t = x] = \frac{x - \beta_t  \mathbb{E}[x_\star|x_t = x]}{\sigma_t },\]
    we get the limit
    \[\lim_{t\to 0} \mathbb{E}[\sqrt{t}\,z|x_t = x] = \frac{c}{\sigma_0} \frac{x}{|x|}.\]
    Note that $b_t(x) =  \dot\beta_t  \mathbb{E}[x_\star|x_t = x] + \dot \sigma_t   \mathbb{E}[\sqrt{t}\,z|x_t = x]$. From the above result we get that \[\lim_{t \to 0} \dot\beta_t  \mathbb{E}[x_\star|x_t = x] = \lim_{t \to 0} \frac{\dot\beta_t }{\beta_t }\cdot  \beta_t \mathbb{E}[x_\star|x_t = x] = \infty\] as $\beta_0 = 0$, $\dot{\beta}_0 \neq 0$, and \[\lim_{t \to 0} \dot \sigma_0   \mathbb{E}[\sqrt{t}\,z|x_t = x] = \frac{c\dot \sigma_0}{\sigma_0} \frac{x}{|x|} .\]
    Therefore, $\lim_{t\to 0} b_t(x) = \infty$.
\end{proof}
\begin{proof}[Proof of Lemma~\ref{lemma-exponential}]
        By definition of conditional expectations, for $t \in (0,\epsilon)$, we have
    \begin{equation}
    \begin{aligned}
        &\E[X|p_t X + q_tZ = y_t] \\
        = &\frac{\int x \exp\big(-\frac{|y_t-p_tx|^2}{2q^2_t}-|x|\big)\, {\rm d}x}{\int \exp\big(-\frac{|y_t-p_tx|^2}{2q^2_t}-|x|\big)\, {\rm d}x}\\
        =& \frac{\int \frac{x}{p_t} \exp\big(-\frac{|y_t-x|^2}{2q^2_t}-\frac{|x|}{p_t}\big)\, {\rm d}x}{\int \exp\big(-\frac{|y_t-x|^2}{2q^2_t}-\frac{|x|}{p_t}\big)\, {\rm d}x}\\
        =& \frac{\int_0^{+\infty} \frac{x}{p_t} \exp\big(-\frac{|y_t-x|^2}{2q^2_t}-\frac{x}{p_t}\big)\, {\rm d}x + \int_{-\infty}^0 \frac{x}{p_t} \exp\big(-\frac{|y_t-x|^2}{2q^2_t}+\frac{x}{p_t}\big)\, {\rm d}x }{\int_0^{\infty} \exp\big(-\frac{|y_t-x|^2}{2q^2_t}-\frac{x}{p_t}\big)\, {\rm d}x + \int_{-\infty}^0 \exp\big(-\frac{|y_t-x|^2}{2q^2_t}+\frac{x}{p_t}\big)\, {\rm d}x} .
    \end{aligned}
    \end{equation}
    Note that \[-\frac{|y_t-x|^2}{2q^2_t}-\frac{x}{p_t} = -\frac{1}{2q^2_t}\Big(x-\big(y_t-\frac{q^2_t}{p_t}\big)\Big)^2-\frac{y_t}{p_t}+\frac{q^2_t}{2p^2_t} ,\]
    and
    \[-\frac{|y_t-x|^2}{2q^2_t}+\frac{x}{p_t} = -\frac{1}{2q^2_t}\Big(x-\big(y_t+\frac{q^2_t}{p_t}\big)\Big)^2+\frac{y_t}{p_t}+\frac{q^2_t}{2p^2_t} .\]
    We can calculate
    \begin{equation*}
    \begin{aligned}
        &\int_0^{+\infty} \frac{x}{p_t} \exp\Big(-\frac{|y_t-x|^2}{2q^2_t}-\frac{x}{p_t}\Big)\, {\rm d}x \\
        = &\frac{1}{p_t}\exp\Big(-\frac{y_t}{p_t}+\frac{q^2_t}{2p^2_t}\Big)\int_0^{+\infty} x \exp\Big(-\frac{1}{2q^2_t}\big(x-(y_t-\frac{q^2_t}{p_t})\big)^2\Big)\, {\rm d}x\\
        = & \frac{1}{p_t}\exp\Big(-\frac{y_t}{p_t}+\frac{q^2_t}{2p^2_t}\Big) \left(q_t\big(y_t-\frac{q^2_t}{p_t}\big)\sqrt{2\pi}\,F\Big(\frac{y_t-q_t^2/p_t}{q_t}\Big) + q^2_t\exp\Big(-\frac{1}{2q^2_t}\big(y_t-\frac{q^2_t}{p_t}\big)^2\Big)\right) ,
    \end{aligned}
    \end{equation*}
    where $F$ is the cumulative distribution function (CDF) of the standard normal, i.e., $F(x) = \mathbb{P}\{Y \leq x\}$ for $Y \sim \sfN(0,1)$. Similarly,
    \begin{equation*}
    \begin{aligned}
        &\int_{-\infty}^{0} \frac{x}{p_t} \exp\Big(-\frac{|y_t-x|^2}{2q^2_t}+\frac{x}{p_t}\Big)\, {\rm d}x \\
        = & \frac{1}{p_t}\exp\Big(\frac{y_t}{p_t}+\frac{q^2_t}{2p^2_t}\Big) \left(q_t\big(y_t+\frac{q^2_t}{p_t}\big)\sqrt{2\pi}\,F\Big(-\frac{y_t+q^2_t/p_t}{q_t}\Big) - q^2_t\exp\Big(-\frac{1}{2q^2_t}\big(y_t+\frac{q^2_t}{p_t}\big)^2\Big)\right) ,
    \end{aligned}
    \end{equation*}
    \begin{equation*}
    \begin{aligned}
        &\int_0^{\infty} \exp\Big(-\frac{|y_t-x|^2}{2q^2_t}-\frac{x}{p_t}\Big)\, {\rm d}x = \exp\Big(-\frac{y_t}{p_t}+\frac{q^2_t}{2p^2_t}\Big) q_t\sqrt{2\pi}\,F\Big(\frac{y_t-q^2_t/p_t}{q_t}\Big) ,
    \end{aligned}
    \end{equation*}
    \begin{equation*}
    \begin{aligned}
        \int_{-\infty}^{0} \exp\Big(-\frac{|y_t-x|^2}{2q^2_t}+\frac{x}{p_t}\Big)\, {\rm d}x = \exp\Big(\frac{y_t}{p_t}+\frac{q^2_t}{2p^2_t}\Big) q_t\sqrt{2\pi}\,F\Big(-\frac{y_t+q^2_t/p_t}{q_t}\Big) .
    \end{aligned}
    \end{equation*}
    Using the above equations, we get
     \begin{equation}
     \label{eqn: lemma2 eqn 213}
    \begin{aligned}
        &\E[X|p_t X + q_tZ = y_t] \\
        =& \frac{1}{p_t} \frac{\exp(-\frac{2y_t}{p_t})(y_t-\frac{q^2_t}{p_t})F(\frac{y_t-q^2_t/p_t}{q_t}) + (y_t+\frac{q^2_t}{p_t})F(-\frac{y_t+q^2_t/p_t}{q_t})}{\exp(-\frac{2y_t}{p_t})F(\frac{y_t-q_t^2/p_t}{q_t}) + F(-\frac{y_t+q^2_t/p_t}{q_t})} .
    \end{aligned}
    \end{equation}
    Let $\lim_{t \to 0} \frac{q^2_t}{p_t} = c <\infty$. Suppose $\lim_{t \to 0} y_t = y_0 > c$; the proof for $y_0 < -c$ is similar. Then, $\frac{y_t-q_t^2/p_t}{q_t} > 0$ for sufficiently small $t$. Thus, $F(\frac{y_t-q^2_t/p_t}{q_t}) \geq F(0) = \frac{1}{2}$. Therefore, using the estimate for the Gaussian CDF that $F(z)\leq \frac{1}{\sqrt{2\pi}}\frac{1}{|z|}\exp(-z^2/2)$ for $z<0$, we obtain
    \begin{equation*}
    \begin{aligned}
        \left|\frac{F(-\frac{y_t+q^2_t/p_t}{q_t})}{\exp(-\frac{2y_t}{p_t})F(\frac{y_t-q^2_t/p_t}{q_t})}\right|&\leq \frac{2}{\sqrt{2\pi}}\frac{q_t}{y_t+q^2_t/p_t}\exp\Big(-\frac{1}{2}\big(\frac{y_t+q^2_t/p_t}{q_t}\big)^2+\frac{2y_t}{p_t}\Big)\\
        & = \frac{2}{\sqrt{2\pi}}\frac{q_t}{y_t+q^2_t/p_t} \exp\Big(-\frac{1}{2}\big(\frac{y_t-q^2_t/p_t}{q_t}\big)^2\Big) .
    \end{aligned}
    \end{equation*}
    The last terms converge to zero as $t \to 0$ since $\lim_{t \to 0} q_t = 0$ and $\lim_{t \to 0} \frac{q^2_t}{p_t} = c <\infty$. Combining this fact with \eqref{eqn: lemma2 eqn 213} implies that
    \begin{equation*}
        \lim_{t \to 0} p_t\E[X|p_t X + q_tZ = y_t] = \lim_{t \to 0} \frac{(y_t-\frac{q^2_t}{p_t}) + (y_t+\frac{q^2_t}{p_t})\times 0}{ 1+ 0} = y_0 - c .
    \end{equation*}
    When $y_0 < -c$, we will get the limit to be $y_0 + c$. So in a unified way, the limit is $(1-\frac{c}{|y_0|})y_0$.
\end{proof}

\subsection{The condition $\dot{\beta}_0=0$}
In the following, we show that assuming $\dot{\beta}_0=0$ is \textit{sufficient and necessary} to obtain a well-defined limit for all exponential-tailed distributions. 

\begin{assumption}
    The target density $\rho^\star$ is exponential tailed so there exists constants $C_1,C_2 > 0$, such that
    \[\rho^\star(x)\leq C_2\exp(-C_1|x|) , \]
    for any $x \in \mathbb{R}^d$. 
    \label{assump:exponential-tails}
\end{assumption}
The assumption of exponential tails is needed for technical reasons, ensuring the validity of the step that involves the interchange of limits and integrations in the proof of Proposition \ref{th:decokmp}.
\begin{proposition}
\label{th:decokmp}
    Let Assumption \ref{assump:exponential-tails} hold. Consider the setting of interpolants in Theorem \ref{thm:1:b} and assume $\dot\beta_0 = 0$.
It holds that the limits at $t=0$ of $b_t(x)$ and $\nabla b_t(x)$ are finite:
\[\lim_{t \to 0} b_t(x) = \frac{\dot\sigma_0}{\sigma_0}x\quad \text{and} \quad \lim_{t \to 0} \nabla_x b_t(x) = \frac{\dot\sigma_0}{\sigma_0}\mathrm{I}.\]
\end{proposition}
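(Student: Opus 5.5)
We start from the decomposition already used in Appendix~\ref{appendix-Lipschitz regularity of the drift},
\[
b_t(x) = \frac{\dot\sigma_t}{\sigma_t}\,x + \Big(\dot\beta_t - \frac{\beta_t\dot\sigma_t}{\sigma_t}\Big)\,\mathbb{E}[x_\star\mid x_t=x],
\]
valid for $t\in(0,1)$ since $\sigma_t>0$. Since $\sigma_0>0$ and $\sigma\in C^1$, the first term tends to $\frac{\dot\sigma_0}{\sigma_0}x$. Because $\beta_0=0$ and $\dot\beta_0=0$, the prefactor $\kappa_t := \dot\beta_t - \beta_t\dot\sigma_t/\sigma_t$ tends to $0$. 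It therefore suffices to show that $\mathbb{E}[x_\star\mid x_t=x]$ stays bounded, and in fact converges, as $t\to0^+$, locally uniformly in $x$. Similarly, $\nabla_x b_t(x) = \frac{\dot\sigma_t}{\sigma_t}\mathrm{I} + \kappa_t\,\nabla_x\mathbb{E}[x_\star\mid x_t=x]$. By Proposition~\ref{prop:cond-exp-gradient}, the gradient is $\frac{\beta_t}{t\sigma_t^2}\mathrm{Cov}(x_\star\mid x_t=x)$. So the second claim reduces to showing that $\kappa_t\beta_t/(t\sigma_t^2)\to 0$ and that the conditional covariance stays bounded.

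For the conditional moments we use the explicit representation \eqref{eqn-cond-expectation-formula}. The conditional law of $x_\star$ given $x_t=x$ has density proportional to $\rho^\star(y)\exp(-\frac12 M_t|y|^2 + m_t\, y\cdot x)$, with $M_t=\beta_t^2/(t\sigma_t^2)$ and $m_t=\beta_t/(t\sigma_t^2)$. The key observation is that $\dot\beta_0=0$ forces $\beta_t=o(t)$, so $m_t\to 0$ and $M_t\to0$. Hence for $x$ in a compact set $K$ and $t$ small, we have $m_t|x|\le C_1/2$. The integrands are then dominated by $(1+|y|^2)C_2\exp(-C_1|y|/2)$, which is integrable by Assumption~\ref{assump:exponential-tails}. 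By dominated convergence, the quantities $P_t(x),Q_t(x),R_t(x)$ converge to $\mathbb{E}[x_\star x_\star^T]$, $1$ and $\mathbb{E}[x_\star]$ respectively. Thus $\mathbb{E}[x_\star\mid x_t=x]\to\mathbb{E}[x_\star]$ and $\mathrm{Cov}(x_\star\mid x_t=x)\to\mathrm{Cov}(x_\star)$, both bounded. Combined with $\kappa_t\to0$, this gives $\lim_{t\to0}b_t(x)=\frac{\dot\sigma_0}{\sigma_0}x$.

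The main obstacle is the gradient limit, which needs $\kappa_t m_t = \big(\dot\beta_t - \beta_t\dot\sigma_t/\sigma_t\big)\beta_t/(t\sigma_t^2)\to 0$. The factor $\beta_t/t\to 0$, and $\kappa_t\to0$ as well, so the product tends to $0$ using only $\beta\in C^1$ and $\dot\beta_0=0$, without any rate. The term $\beta_t^2\dot\sigma_t/(t\sigma_t^3)$ is handled the same way, since $\beta_t^2/t\to0$. I would double-check that the differentiation under the integral sign in Proposition~\ref{prop:cond-exp-gradient} is justified uniformly for small $t$. This follows from the same domination bound, because $m_t\to0$ keeps the exponential tilt weaker than the exponential tail.

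Finally, for necessity, one would remark that if $\dot\beta_0>0$ then $m_t\sim\dot\beta_0/(t\sigma_0^2)\to\infty$. The tilt then eventually beats the exponential tail, and Example~\ref{prop: dot beta for exponential} shows the blow-up. This is already in the paper and not needed for the stated proposition.
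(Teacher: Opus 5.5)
Your argument is correct and is essentially the paper's proof. It uses the same tilted-density representation with $M_t,m_t\to0$ from $\beta_t=o(t)$, the same domination by $\rho^\star(y)e^{C_1|y|/2}$, and the same identity $\nabla_x\mathbb{E}[x_\star\mid x_t=x]=m_t\,\mathrm{Cov}(x_\star\mid x_t=x)\to0$; your $\kappa_t$-decomposition is just the paper's substitution $\mathbb{E}[\sqrt{t}\,z\mid x_t=x]=(x-\beta_t\mathbb{E}[x_\star\mid x_t=x])/\sigma_t$ made up front, and the ``main obstacle'' is not one, since boundedness of $\kappa_t$ already suffices.

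One slip is in your optional aside on necessity. If $\dot\beta_0>0$, then $m_t=\beta_t/(t\sigma_t^2)\to\dot\beta_0/\sigma_0^2$, which is finite, not divergent. Domination fails only because $m_t|x|$ eventually exceeds the tail rate for large $|x|$ while $M_t\to0$, which is why the blow-up in the Laplace example occurs only for $|x|>c=\sigma_0^2/\dot\beta_0$.
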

\begin{proof}[Proof of Proposition~\ref{th:decokmp}]
By definition $x_t  = \beta_t  x_\star + \sigma_t \sqrt{t}z$ and $b_t(x) = \mathbb{E}[\dot\beta_t x_\star+ \dot \sigma_t  \sqrt{t} \,z|x_t  = x]$. This implies that 
\begin{equation}
    \label{eq:etaz:1}
    \forall (t,x) \in (0,1)\times \R^d \ \text{with} \ \sigma_t >0: \ \mathbb{E}[\sqrt{t}z|x_t = x] = \frac{x - \beta_t  \mathbb{E}[x_\star|x_t = x]}{\sigma_t }.
\end{equation}
Using the proof of Proposition \ref{prop:cond-exp-gradient}, we have the formula
\begin{equation}
    \mathbb{E}[x_\star|x_t = x] = \frac{\int_{\R^d} x_\star \rho^\star(x_\star) \exp(-\frac12M_t |x_\star|^2 +m_t x_\star \cdot x ) {\rm d}x_\star}{\int_{\R^d} \rho^\star(x_\star) \exp(-\frac12M_t |x_\star|^2 +m_t x_\star \cdot x) {\rm d}x_\star}\, .
\end{equation}
Here
    $M_t = \frac{\beta^2_t}{t \sigma^2_t}, m_t = \frac{\beta_t }{t \sigma^2_t}$.
  To establish the limit at $t=0$, notice that, since  $\beta\in C^1([0,1)$ and $\dot \beta_0 = 0$, we must have $\beta_t  = o(t)$ as $t\to 0$. Then using the fact that $\sigma\in C^1([0,1])$ and $\sigma_0>0$, we have $
    \lim_{t\to0} M_t = \lim_{t\to0} m_t = 0$.
As a result,
\[\lim_{t \to 0} \mathbb{E}[x_\star|x_t = x] = \E[x_\star] \quad \text{and} \quad \lim_{t \to 0} \mathbb{E}[\sqrt{t}z|x_t = x] = \frac{x}{\sigma_0}\, .\]
In the above derivation, we need to verify the interchange of limits and integrations. This is guaranteed by using Assumption \ref{assump:exponential-tails} and the Lebesgue dominated convergence theorem: for a fixed $x$, when $t$ is sufficiently small, the factor $\rho^\star(x_\star) \exp(-\frac12M_t |x_\star|^2 +m_t x_\star \cdot x)$  is dominated by $\rho^\star(x_\star) \exp(C_1|x_\star|/2)$, which is integrable as a function of $x_\star$ due to our Assumption \ref{assump:exponential-tails}. Consequently, $\lim_{t \to 0} b_t(x) = \frac{\dot\sigma_0}{\sigma_0}x$.

To analyze the limit of $\nabla_x b_t(x)$, we use the argument in the proof of Proposition \ref{prop:cond-exp-gradient} again to get
\begin{equation}
    \nabla_x \mathbb{E}[x_\star|x_t = x] = m_t\frac{P_t(x)Q_t(x) -R_t(x)R_t(x)^T}{|Q_t(x)|^2}
\end{equation}
where 
\begin{equation*}
    \begin{aligned}
        P_t(x) &= \int_{\R^d} x_\star x_\star^T \rho^\star(x_\star) \exp(-\frac12M_t |x_\star|^2 +m_t x_\star \cdot x ) {\rm d}x_\star\\
        Q_t(x) & = \int_{\R^d} \rho^\star(x_\star) \exp(-\frac12M_t |x_\star|^2 +m_t x_\star \cdot x) {\rm d}x_\star\\
        R_t(x) &= \int_{\R^d} x_\star \rho^\star(x_\star) \exp(-\frac12M_t |x_\star|^2 +m_t x_\star \cdot x) {\rm d} x_\star\, .
    \end{aligned}
\end{equation*}
Using the Lebesgue dominated convergence theorem, we get
\begin{equation}
    \lim_{t\to 0} \frac{P_t(x)Q_t(x) -R_t(x)R_t(x)^T}{|Q_t(x)|^2} = \text{Cov}(x_\star).
\end{equation}
Thus, $\lim_{t \to 0} \nabla_x \mathbb{E}[x_\star|x_t = x]= 0$ since $\lim_{t\to 0} m_t=0$. Using the formula in \eqref{eq:etaz:1}, we get $\lim_{t \to 0} \nabla_x\mathbb{E}[\sqrt{t}z|x_t = x] = \frac{1}{\sigma_0}\mathrm{I}$. Therefore, $\lim_{t\to 0} \nabla_x b_t(x) = \frac{\dot\sigma_0}{\sigma_0}\mathrm{I}$.
The proof is complete.
\end{proof}
Therefore, the condition $\dot{\beta}_0 = 0$ plays a unique role in the point-source setting. It is sufficient to guarantee bounded drift at the initial time for a broad class of exponential-tailed target distributions, and it is essentially necessary in the sense that relaxing it leads to unavoidable singular behavior. 

This condition also appears in the optimally tuned diffusions and the associated F\"ollmer processes, where it leads to an initial-time singularity that takes the form of a negative infinity restoring drift; see Remark~\ref{remark-assumption-g-beta} and Appendix~\ref{appendix-singular-drift-discussion}.

From a practical perspective, schedules satisfying $\dot{\beta}_0 = 0$ have been successfully employed in applications such as probabilistic forecasting of turbulent flows \cite{chen2024probabilistic} which have been observed to outperform schedules not satisfying this condition. In such settings, the KL-optimal diffusion coefficient $g_t = g_t^{\rm F}$ corresponding to the F\"ollmer process has also been observed to yield improved statistical performance over both the baseline choice $g_t = \sigma_t$ and deterministic ODE-based alternatives for certain observables. These empirical observations are consistent with the variational optimality established in the present work, and further illustrate the relevance of the theoretical analysis to practical generative modeling problems.

For completeness, we also include a numerical example below regarding the optimization stability arising from the condition $\dot{\beta}_0=0$.

\subsection{Experiments: Improved optimization stability with $\dot{\beta}_0=0$}
\label{sec-dotbeta-0-improved-training}
We present numerical experiments\footnote{The code is available at \url{https://github.com/yifanc96/GenerativeDynamics-NumericalDesign.git}.} for probabilistic forecasting of stochastic Navier-Stokes \cite{chen2024probabilistic} to demonstrate that the choice of $\dot{\beta}_0=0$ can lead to more stable optimization and better generation results. 

More precisely, consider the 2D Navier-Stokes equations with random forcing on the torus $\mathbb{T}^2 = [0,2\pi]^2$. In vorticity formulation, the governing equation is:
\begin{equation}
    \label{eq:2D_vorticity_NS}
    \mathrm{d}\omega + v \cdot \nabla\omega\,\mathrm{d}t = \nu \Delta\omega\,\mathrm{d}t - \alpha\omega\,\mathrm{d}t + \varepsilon\,\mathrm{d}\eta\, .
\end{equation}
The velocity field $v = \nabla^{\perp} \psi = (-\partial_y\psi, \partial_x\psi)$ derives from the stream function $\psi$, related to vorticity through $-\Delta \psi = \omega$. The stochastic forcing $\mathrm{d}\eta$ represents white-in-time noise applied to a finite set of Fourier modes. We set the physical parameters to $\nu=10^{-3}$, $\alpha=0.1$, and $\varepsilon = 1$, following \cite{chen2024probabilistic}. Under these conditions, equation~\eqref{eq:2D_vorticity_NS} is rigorously ergodic with a unique invariant measure~\cite{hairer2006ergodicity}. In contrast to the deterministically-driven Kolmogorov-flow example of Section~\ref{sec:numerics-ns}, this is an inherently probabilistic forecasting problem: the dynamics themselves are stochastic, so the conditional law $\omega_{t+\tau} \mid \omega_t$ is non-degenerate even with a fully observed current state, and probabilistic-forecasting criteria can be evaluated against the simulator's ground-truth ensemble.
\begin{figure}[ht]
    \centering
    \begin{overpic}[width=0.24\linewidth]{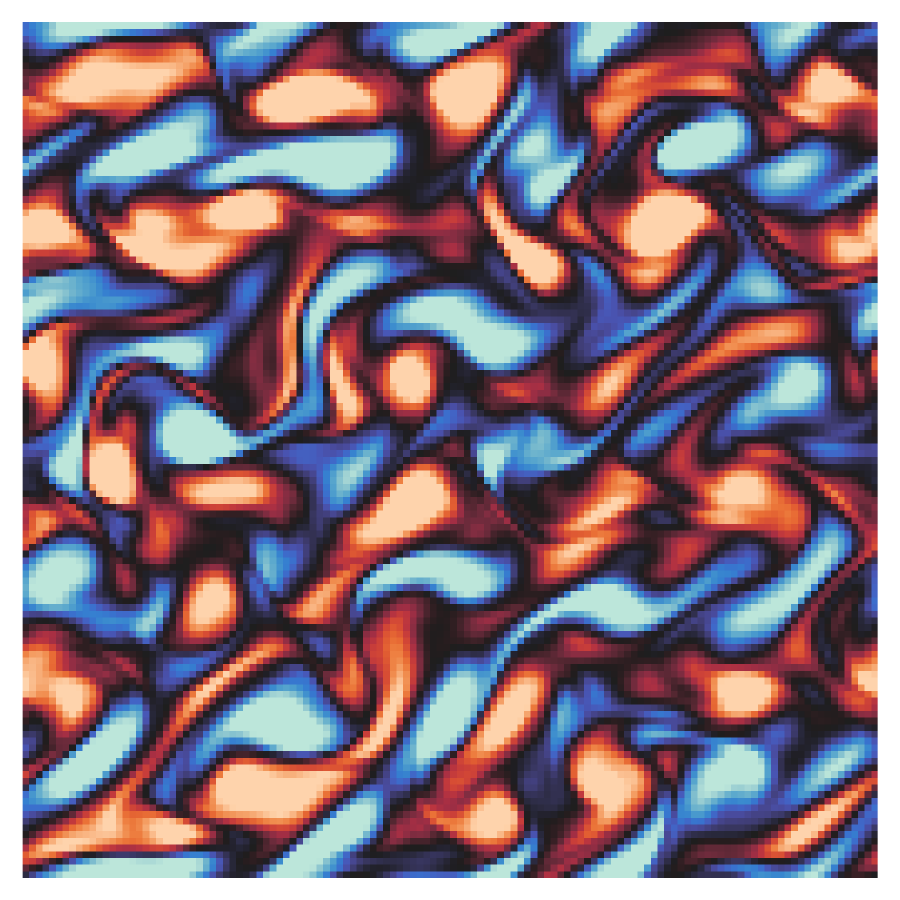}\put(45,-8){\small $t$}
    \end{overpic}
    \begin{overpic}[width=0.24\linewidth]{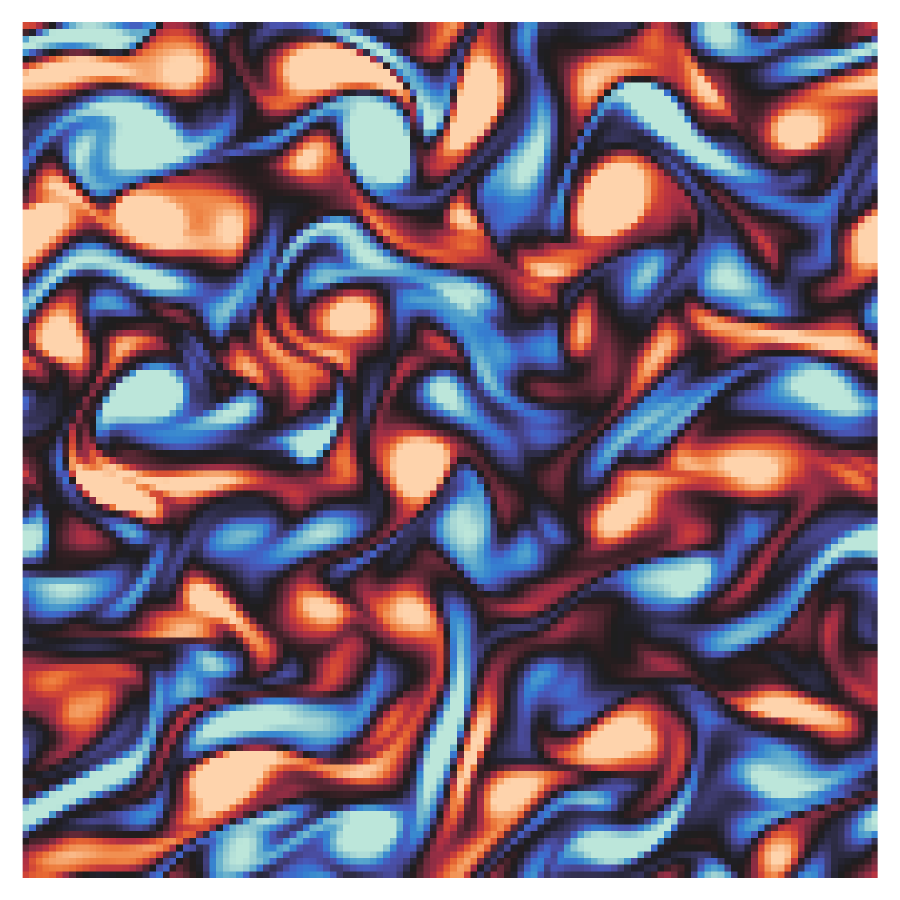}\put(40,-8){\small $t+1$}
    \end{overpic}
    \begin{overpic}[width=0.24\linewidth]{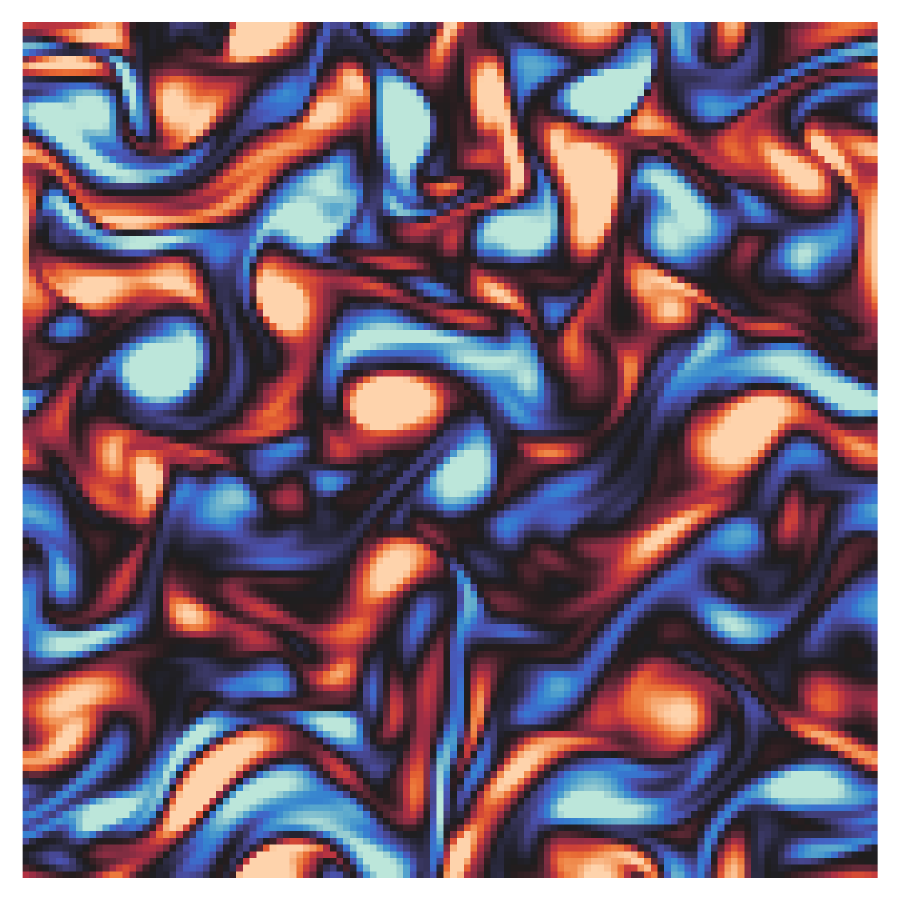}\put(40,-8){\small $t+2$}
    \end{overpic}
    \begin{overpic}[width=0.24\linewidth]{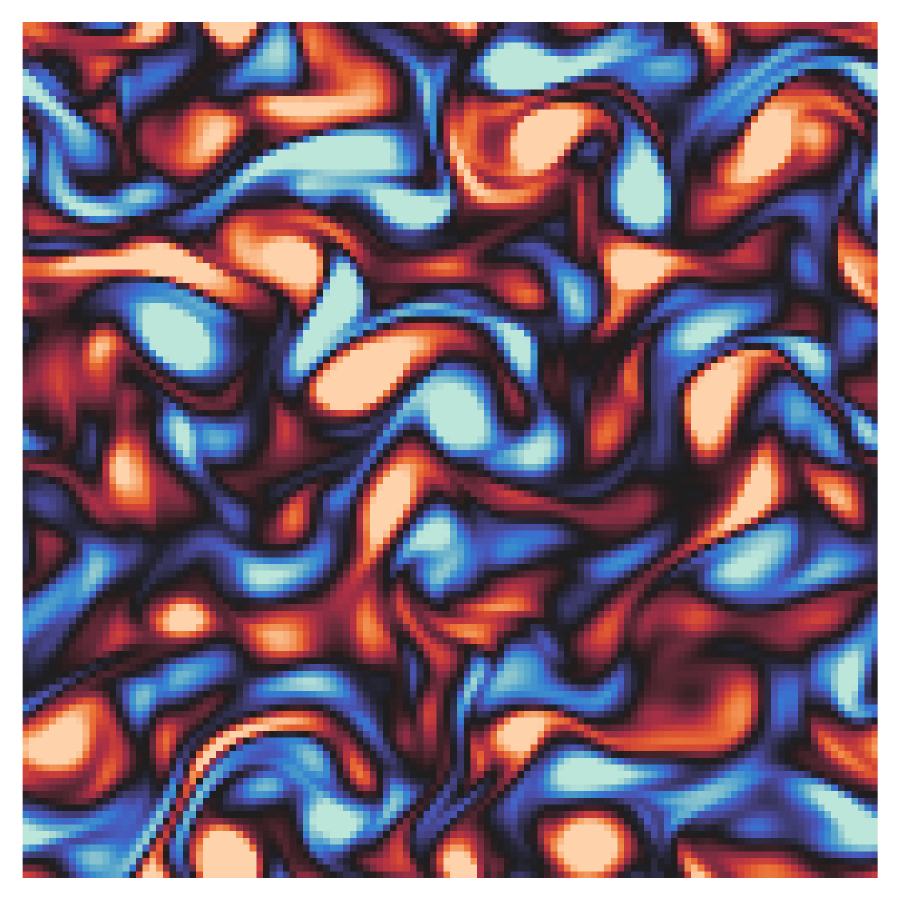}\put(40,-8){\small $t+3$}
    \end{overpic}
    \caption{A sample of trajectory of the stochastic Navier-Stokes equation}
    \label{fig:NStrajectory}
\end{figure}

We generate training data through long-term numerical simulation on a fine computational grid using the pseudo-spectral method. Following the simulation setup detailed in \cite{chen2024probabilistic}, we use a timestep of $\Delta t = 10^{-4}$ and a grid resolution of $256\times 256$, storing snapshots at regular intervals of $\Delta t = 0.5$. Our objective is to forecast the solution to \eqref{eq:2D_vorticity_NS} at time $t+\tau$ given its solution at time $t$, where $\tau = 1$, after the system has reached a statistically steady state. The autocorrelation function for such a lag is approximately $0.25$ for the setting we consider \cite{chen2024probabilistic}. A sample of trajectory is shown in Figure \ref{fig:NStrajectory}.

To reduce computational memory requirements, we downsample the dataset to a resolution of $128\times 128$. Let the corresponding data pairs be denoted by $(\omega_t, \omega_{t+\tau})$ for which we have $10^5$ pairs in the dataset. We build the stochastic interpolant $I_t = \beta_t x_1 + \sigma_t W_t$ where $x_1 = \omega_{t+\tau}-\omega_t$. We construct the objective function similar to \eqref{eqn-loss-for-interpolants}
\begin{equation}
    L_b[\hat b] = \int_0^1 \E \big[|\hat b_t(x_t; \omega_t) - ( \dot \beta_t  x_1+ \dot\sigma_t  \sqrt{t} \,z)|^2\big]  {\rm d}t\, .
\end{equation}
Note that $\omega_t$ is also an input to the drift function $\hat{b}_t$ since our goal here is to estimate the conditional distribution $\omega_{t+\tau}|\omega_t$. This is related but slightly different from the approach in \cite{chen2024probabilistic}; there, the last state is chosen as the point source, while here we use zero.
For all experiments, we employ a UNet architecture~\cite{ho2020denoising} to approximate the drift field. Given a new $\omega_t$, forecasting is done by integrating from $t=0$ to $t=1$ the SDE
\begin{equation}
    {\rm d}\hat{X}_t = \hat{b}_t(\hat{X}_t; \omega_t) {\rm d}t + \sigma_t  {\rm d}W_t, \quad \hat{X}_{t=0} = 0\, ,
\end{equation}
and then the forecast is obtained by $\hat{X}_1 + \omega_t$.

Below, we compare the choices $\beta_t = t, t^2, \frac{2t^2}{t+1}, \frac{3t^2}{2t+1}$; the latter three satisfy $\dot\beta_0 = 0$ while the former does not. In all cases, we choose $\sigma_t = 1 - t$. Below we show the optimization curves of the objective function $L_b$ based on optimization stepsize $l_r = 10^{-4}$ and cosine scheduler. The top and middle rows of Figure \ref{fig:ns-example} show the curve of objective functions and gradient norms during optimization. Clearly, $\beta_t = t$ leads to more noisy gradient norms compared to the other three choices which satisfy $\dot\beta_0 = 0$.
\begin{figure}[ht]
    \centering
    \includegraphics[width=0.24\linewidth]{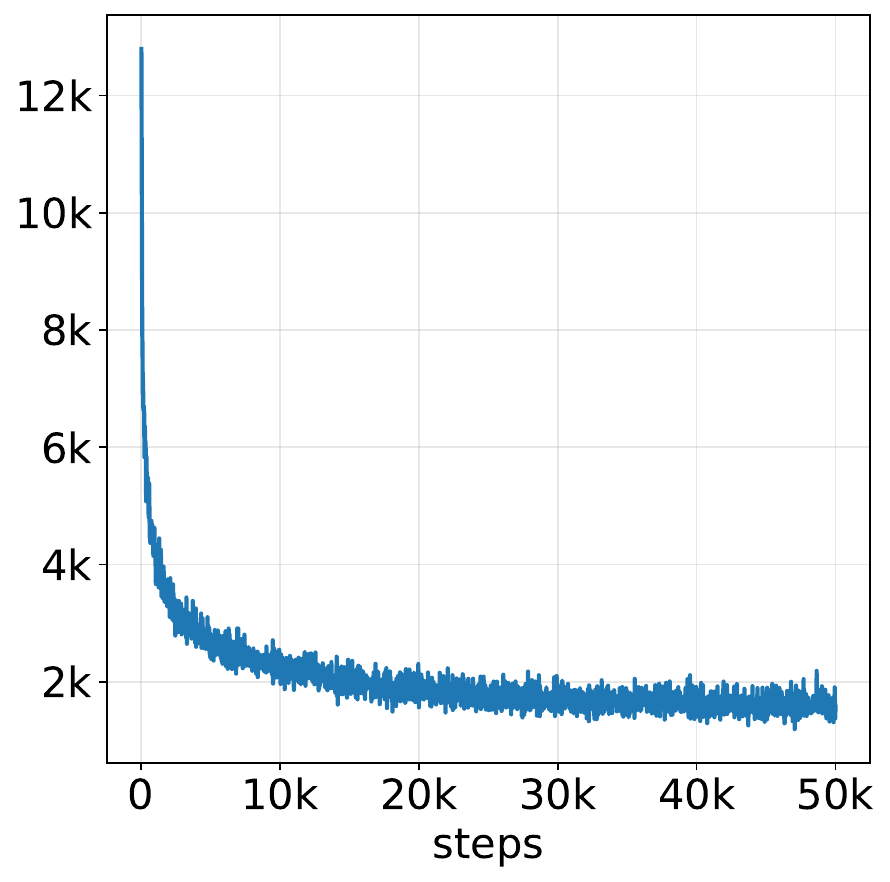}
    \includegraphics[width=0.24\linewidth]{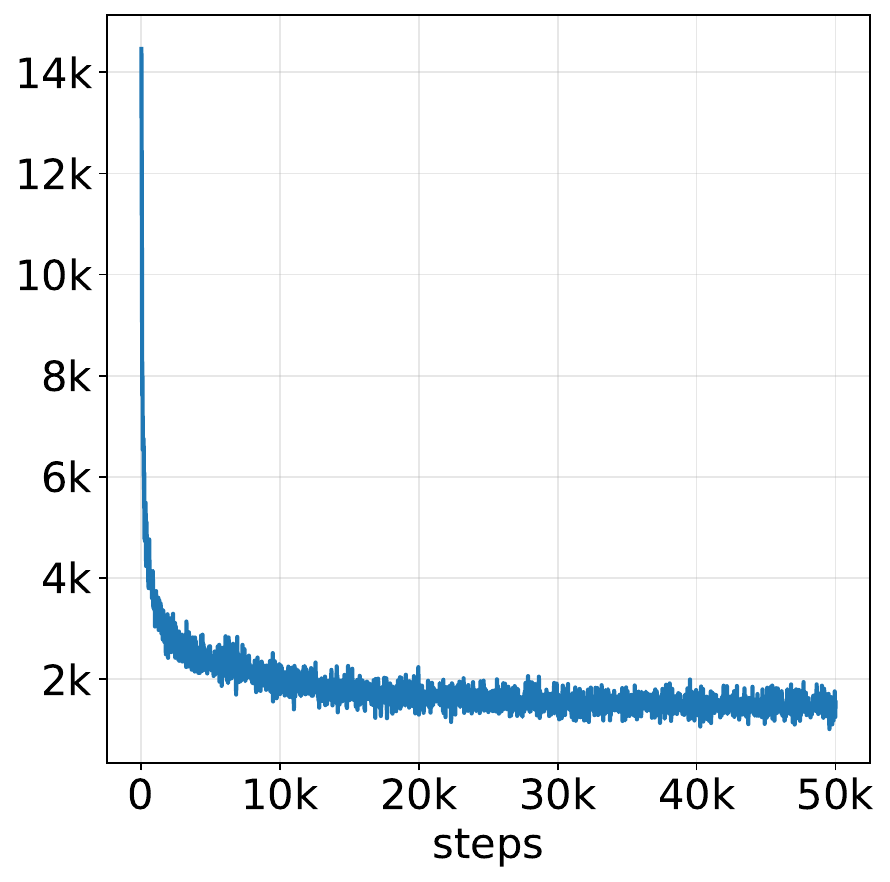}
    \includegraphics[width=0.24\linewidth]{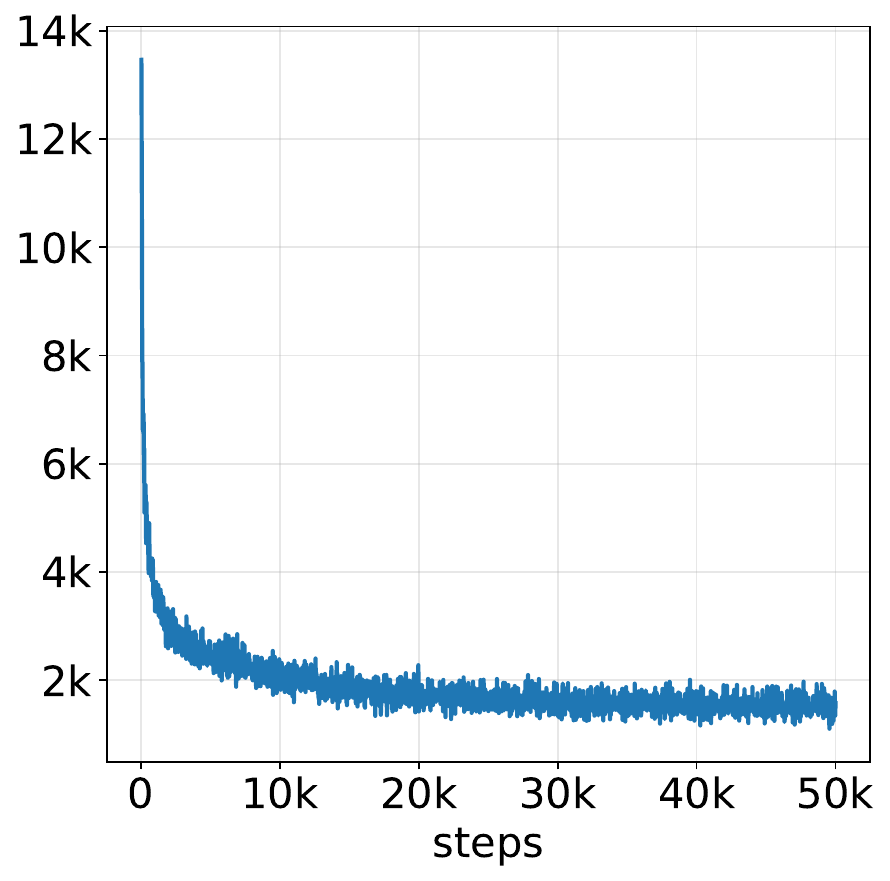}
    \includegraphics[width=0.24\linewidth]{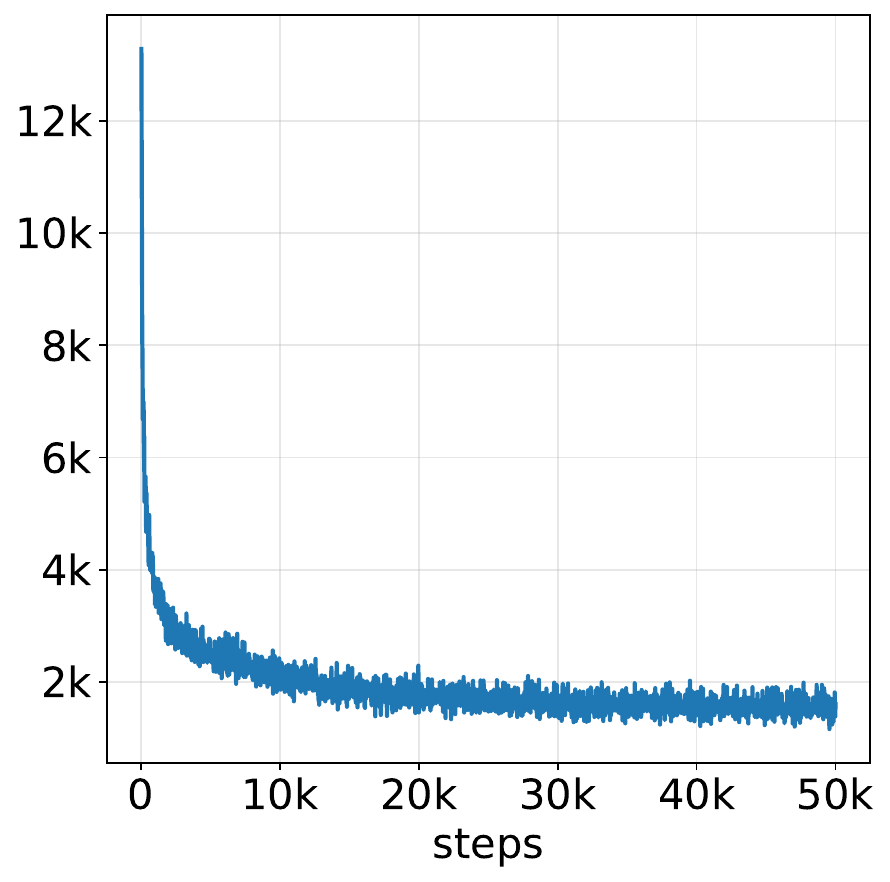}
    \includegraphics[width=0.24\linewidth]{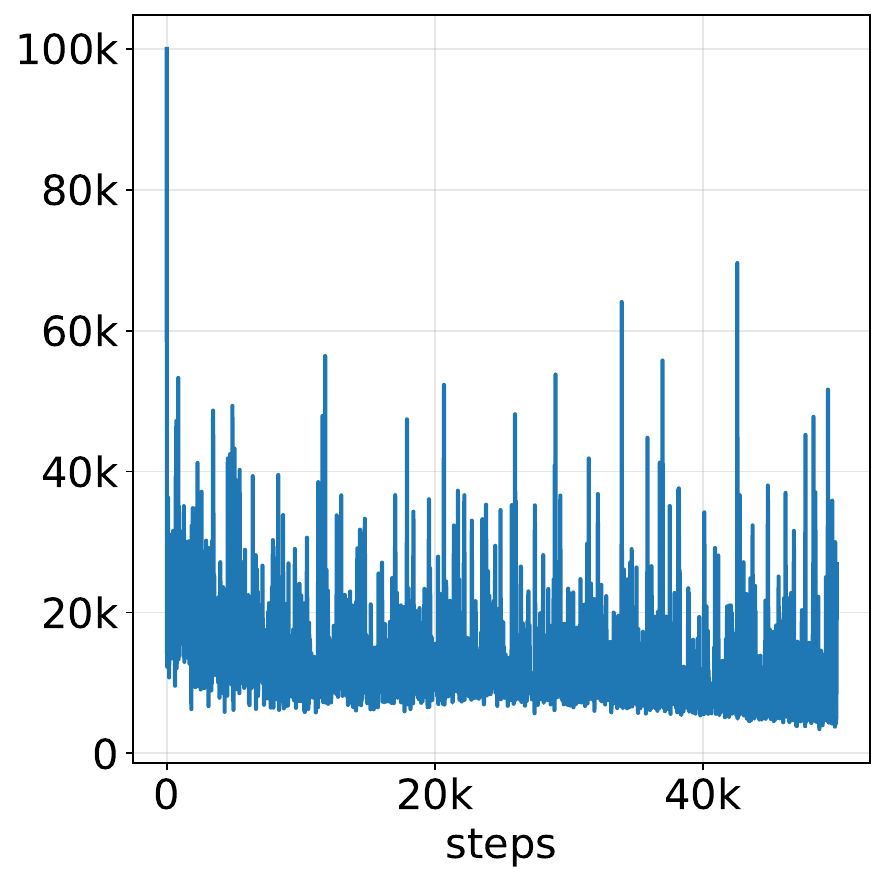}
    \includegraphics[width=0.24\linewidth]{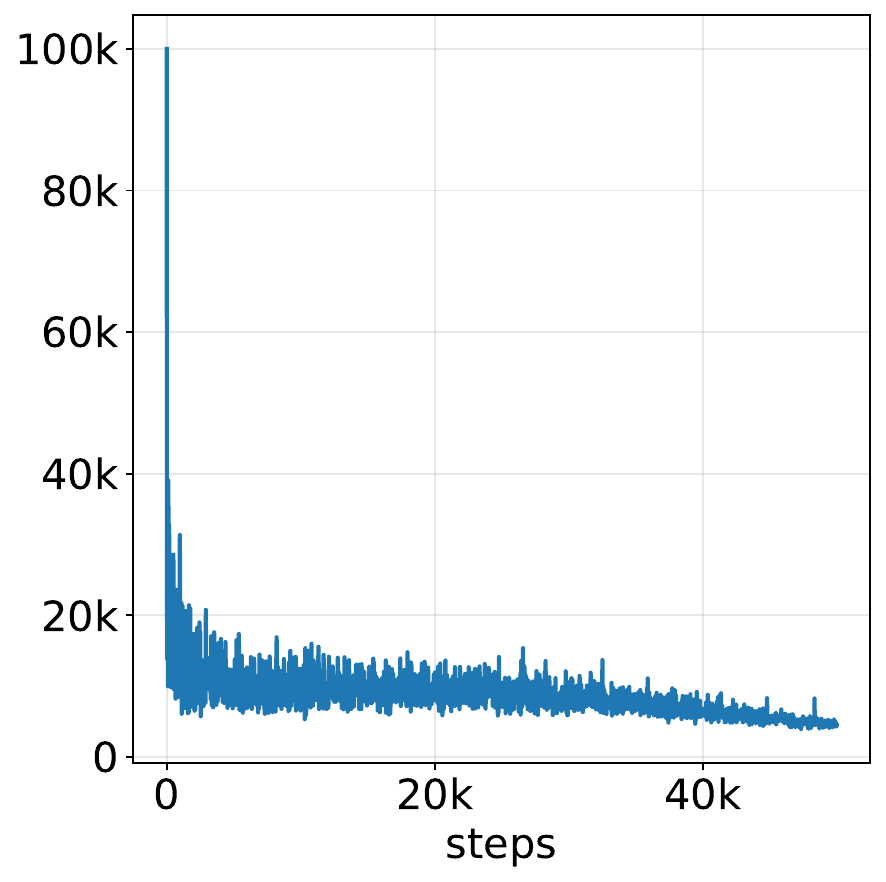}
    \includegraphics[width=0.24\linewidth]{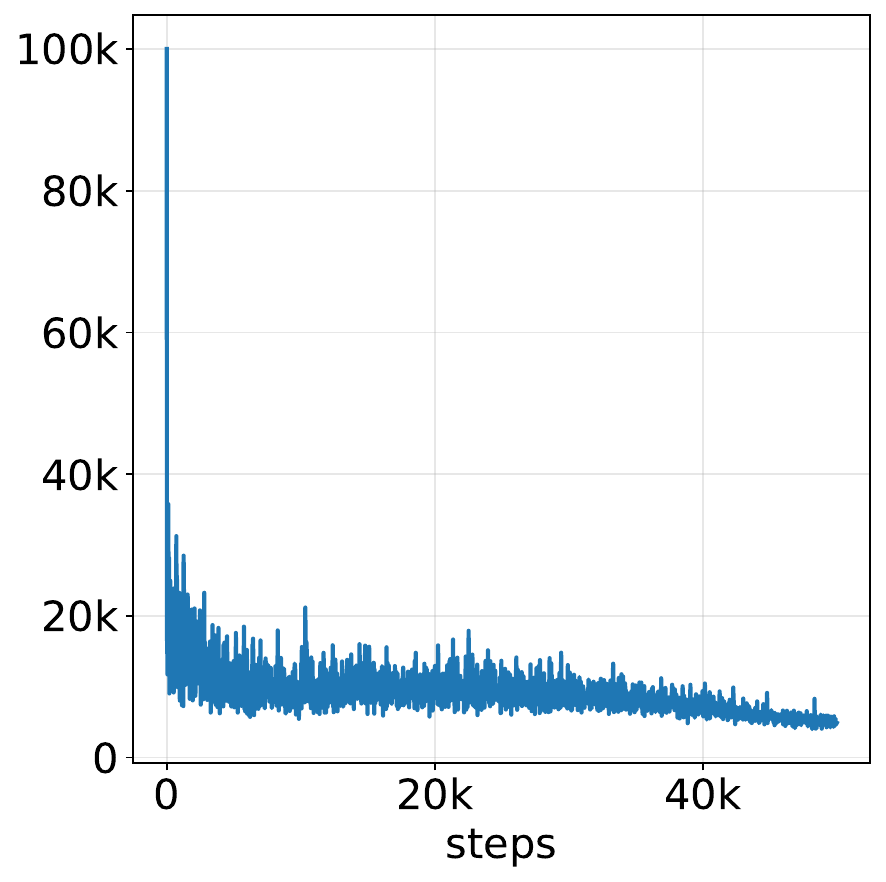}
    \includegraphics[width=0.24\linewidth]{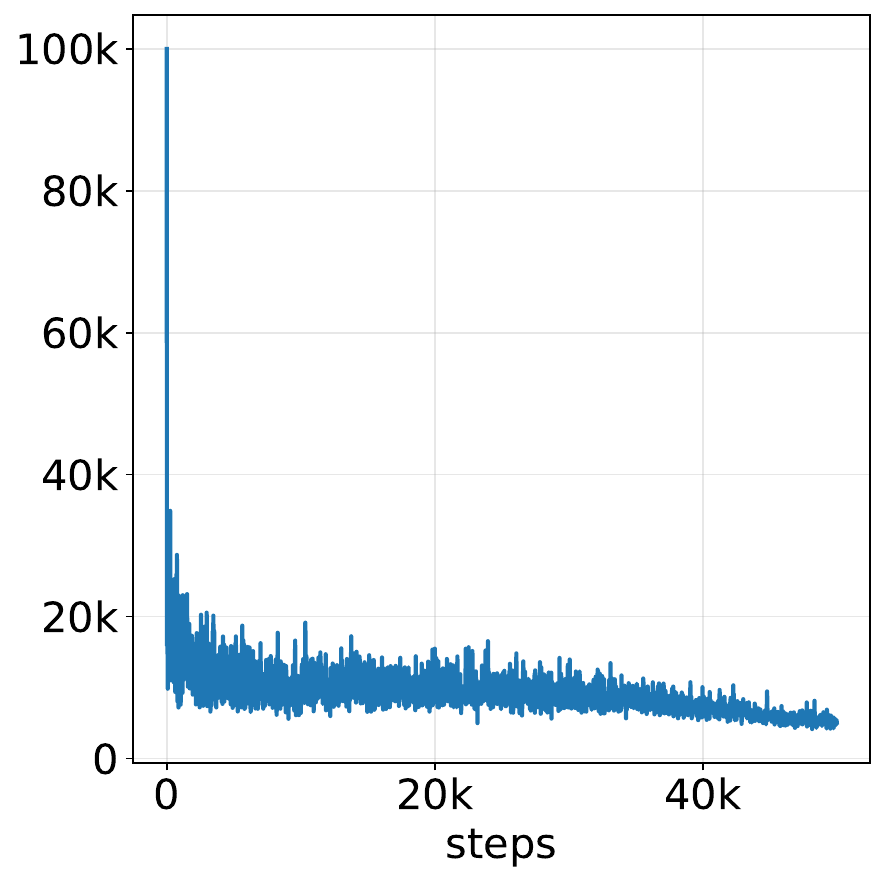}
     \begin{overpic}[width=0.24\linewidth]{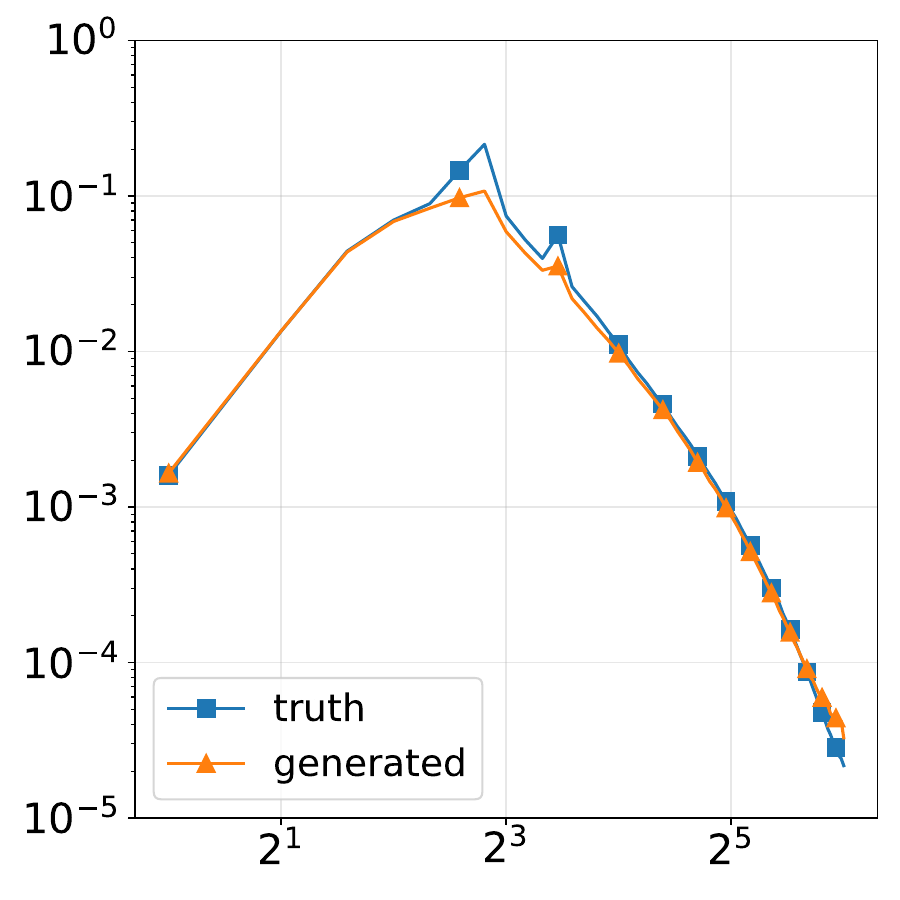}\put(36,-10){$\beta_t = t$}
     \end{overpic}
    \begin{overpic}
        [width=0.24\linewidth]{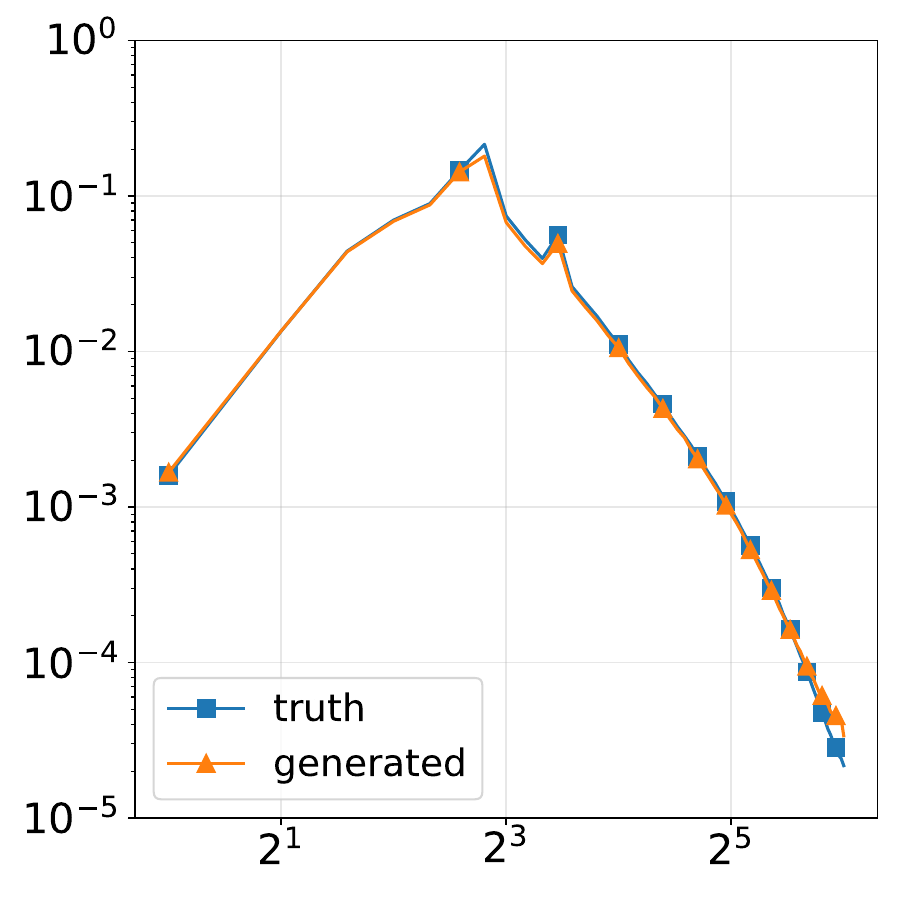}\put(36,-10){$\beta_t = t^2$}
    \end{overpic}
    \begin{overpic}[width=0.24\linewidth]{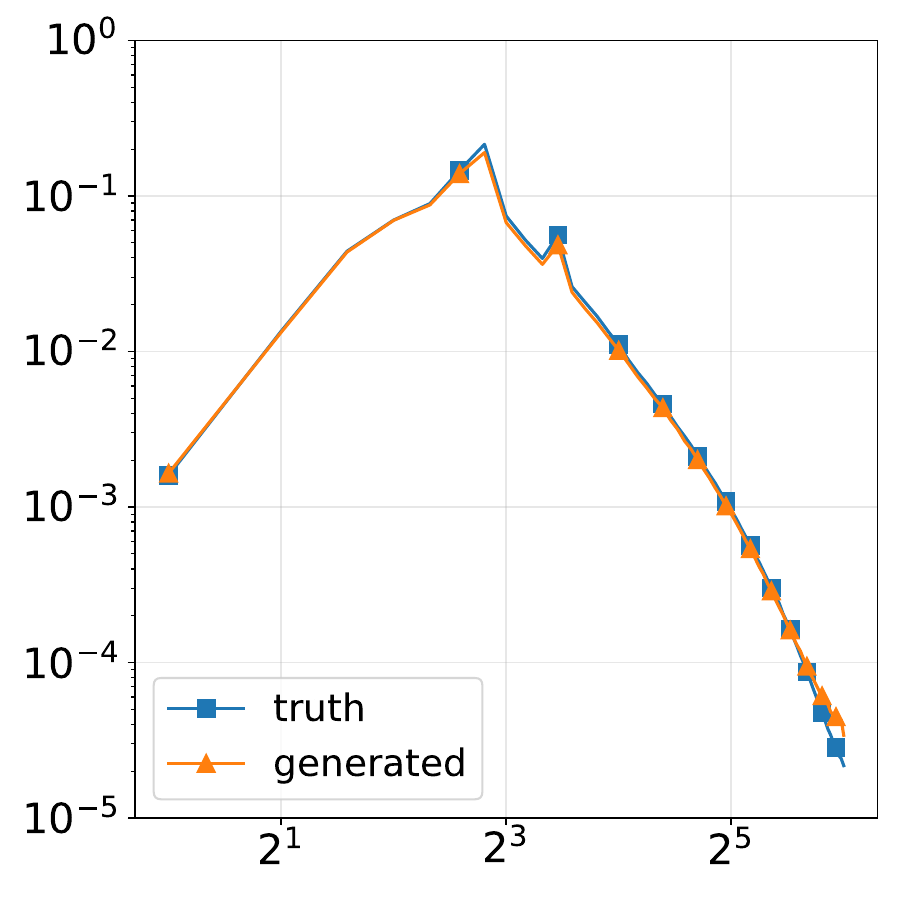}\put(36,-10){$\beta_t = \frac{2t^2}{t+1}$}
    \end{overpic}
    \begin{overpic}[width=0.24\linewidth]{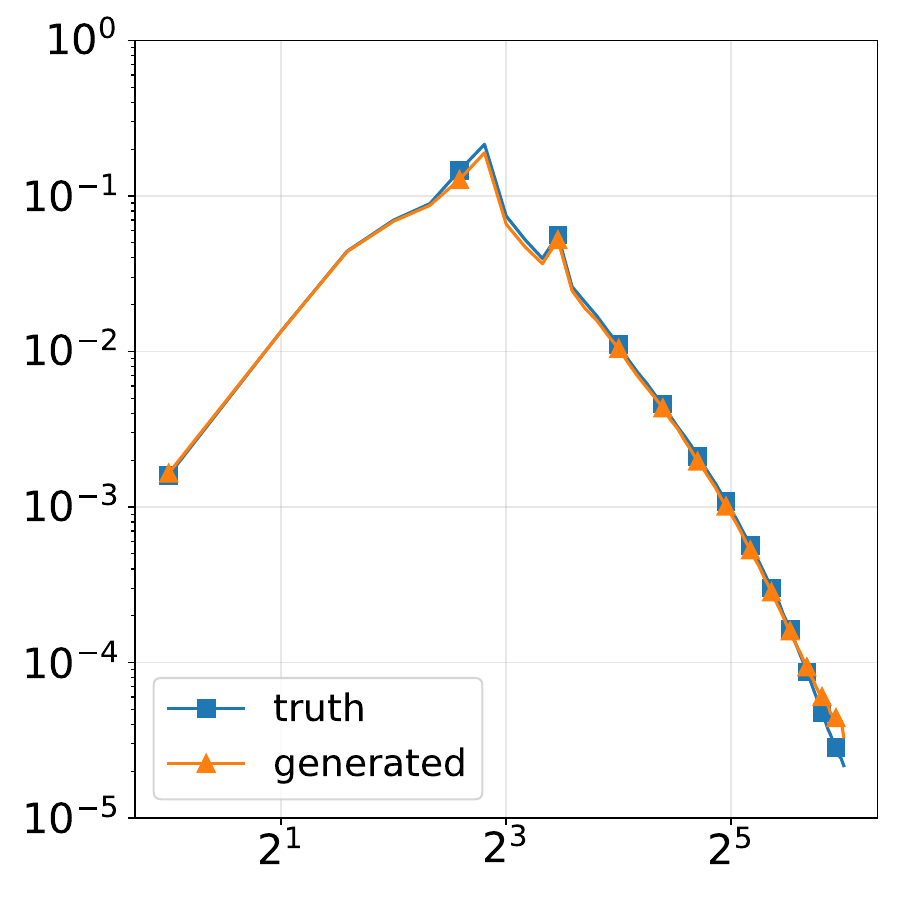}\put(36,-10){$\beta_t = \frac{3t^2}{2t+1}$}
    \end{overpic}
    \vspace{1.5em}
    \caption{\textbf{Top row}: objective functions during optimization. \textbf{Middle row}: gradient norms during optimization. \textbf{Bottom row}: enstrophy spectrum of generated samples versus truth.}
    \label{fig:ns-example}
\end{figure}

We also show the enstrophy spectrum of the estimated forecast; the forecast samples are obtained by applying $100$ Euler-Maruyama (EM) steps to the SDE. For a 2D sample $u$, the spectrum is computed as
\[E(k) = \sum_{k\leq |m|_2\leq k+1} |\hat{u}(m)|^2\, ,\]
where $\hat{u}(m)$ denotes the Fourier coefficients of $u$ at $m \in \mathbb{Z}^2_+ \backslash \{0\}$. We compute $E(k)$ by averaging over a sufficiently large ensemble of samples for each frequency $k$. In the bottom row of Figure \ref{fig:ns-example}, we show the enstrophy spectrum for different $\beta_t$. The choice $\beta_t$ leads to an inaccurate estimation, while the other three lead to similar performance in this case. This demonstrates a correlation between the optimization stability and the generation accuracy. We note that common tricks such as decreasing stepsize do not improve the case $\beta_t = t$ in our experience. Also, with more EM steps, the accuracy does not improve significantly. These results indicate that choosing $\dot\beta_0 = 0$ is useful to improve regularity and stability. 

\subsection{Improved forecasting results using $\dot \beta_0=0$ for the Kolmogorov flow example}
\label{sec:sqlin-posterior}

We close by revisiting the data-assimilation experiment of Section \ref{sec:numerics-ns} under the \emph{square-linear} schedule $\beta_t = t^2$, $\sigma_t = 1-t$, which satisfies the regularity condition $\dot\beta_0 = 0$ studied in Section~\ref{sec-dotbeta-0-improved-training}. All other components of the setup---dataset, U-Net architecture, optimizer, observation model~\eqref{eq:ns-obs}, six held-out test pairs, and forecast lag $\tau = 0.5$---are kept identical to Section~\ref{sec:numerics-ns}, so that any difference in the reported numbers is attributable to the schedule alone.

\vspace{0.2em}
\paragraph{\bf Schedule-specific quantities.} For $\beta_t = t^2$, $\sigma_t = 1-t$, the F\"ollmer diffusion coefficient of Theorem~\ref{prop-min-KL} is
\begin{equation}
\label{eq:gF-sqlin}
    g_t^{\rm F} = \sqrt{(1-t)(3-t)}\, ,
\end{equation}
which satisfies $g_0^{\rm F} = \sqrt{3}$ and $g_1^{\rm F} = 0$. The denoiser $\hat x_1(x,t) = \E[x_\star \mid x_t = x, \tilde\omega_t]$ is obtained as in Section~\ref{sec:numerics-ns}, by combining the baseline-drift identity $b_t = 2t\,\E[x_\star\mid x_t] - \sqrt{t}\,\E[z\mid x_t]$ (from $\dot\beta_t = 2t$, $\dot\sigma_t = -1$) with the linear constraint $x = t^2\,\E[x_\star\mid x_t] + (1-t)\sqrt{t}\,\E[z\mid x_t]$ from the interpolant. Solving the resulting $2\times 2$ system yields the closed-form denoiser
\begin{equation}
\label{eq:tweedie-sqlin}
    \hat x_1(x,t) = \frac{x + (1-t)\,\hat b_t(x\mid\tilde\omega_t)}{t(2-t)} \, .
\end{equation}
The score-form decomposition of the prior drift~\eqref{eq:bg-score-decomp} likewise generalises:
\begin{equation}
\label{eq:bg-score-sqlin}
    \hat b_t^g(x\mid\tilde\omega_t) = \frac{2}{t}\,x + C_g(t)\,\nabla_x\log\rho_t(x\mid\tilde\omega_t), \qquad C_g(t) = \frac{g_t^2 + (1-t)(3-t)}{2} \, ,
\end{equation}
and the identity $C_g \equiv g_t^2 \Leftrightarrow g = g_t^{\rm F}$ from Section~\ref{sec:numerics-ns} continues to hold for this schedule. The posterior SDE therefore takes the same form as~\eqref{eq:ns-guided-sde} with $C_g(t)$ given by~\eqref{eq:bg-score-sqlin} and $\hat x_1$ given by~\eqref{eq:tweedie-sqlin}.

\vspace{0.2em}
\paragraph{\bf Sampler.} We integrate the posterior SDE on the truncated interval $[0.05,\, 0.95]$ with $32$ trajectories per test pair and $100$ Euler--Maruyama steps. The larger left endpoint $t_{\min} = 0.05$ (as opposed to $10^{-2}$ in Section~\ref{sec:numerics-ns}) is required because the F\"ollmer drift~\eqref{eq:bg-score-sqlin}--\eqref{eq:gF-sqlin} for this schedule develops a genuine $1/t$ singularity as $t\to 0^+$, in contrast to the linear-linear case whose drift remains bounded. See Appendix~\ref{appendix-singular-drift-discussion} for a detailed analysis of this singularity.

\vspace{0.2em}
\paragraph{\bf Results.} Table~\ref{tab:ns-posterior-sqlin} reports the relative posterior-mean RMSE under the square-linear schedule, averaged over the same six held-out test pairs as Table~\ref{tab:ns-posterior}, alongside the linear-linear numbers reproduced from that table for direct comparison. Two observations stand out:
\begin{itemize}
\item Across all three diffusion coefficients, the square-linear schedule improves on the linear-linear one. This is consistent with the optimization-stability advantage of $\dot\beta_0 = 0$ documented in Section~\ref{sec-dotbeta-0-improved-training}: a more accurately estimated drift produces a more accurate generative path measure, and posterior sampling inherits this improvement.
\item The F\"ollmer schedule still attains the smallest RMSE among the three diffusion coefficients, with a $\sim\!31\%$ reduction over its linear-linear counterpart ($0.275 \to 0.189$). The ranking of the three coefficients (F\"ollmer $<$ constant $<$ baseline) is preserved.
\end{itemize}
Combining the improved training stability from $\dot\beta_0 = 0$ with F\"ollmer tuning of the diffusion coefficient helps, and the resulting posterior samples are the most accurate of the four configurations considered in this paper.

\begin{table}[h]
\centering
\small
\begin{tabular}{lcc}
\toprule
diffusion coefficient & linear-linear ($\beta_t = t$) & square-linear ($\beta_t = t^2$) \\
\midrule
Baseline, $g_t = \sigma_t$   & $0.313 \pm 0.090$          & $0.225 \pm 0.040$ \\
Constant, $g_t = 1$          & $0.297 \pm 0.084$          & $0.194 \pm 0.025$ \\
F\"ollmer, $g_t = g_t^{\rm F}$ & $\mathbf{0.275 \pm 0.091}$ & $\mathbf{0.189 \pm 0.025}$ \\
\bottomrule
\end{tabular}
\caption{Relative posterior-mean RMSE on the Kolmogorov-flow data-assimilation task of Section~\ref{sec:numerics-ns} under two schedules: the linear-linear schedule $\beta_t = t$, $\sigma_t = 1-t$ (left column, reproduced from Table~\ref{tab:ns-posterior}) and the square-linear schedule $\beta_t = t^2$, $\sigma_t = 1-t$ for which $\dot\beta_0 = 0$ (right column). Mean $\pm$ standard deviation across the same six held-out test pairs $(\tilde\omega_t, \omega_{t+\tau})$ as Table~\ref{tab:ns-posterior}; $16$ posterior samples per test pair.}
\label{tab:ns-posterior-sqlin}
\end{table}

\end{document}